\newtheorem{case}{Case}
\newtheorem{subcase}{Subcase}[case]
\newtheorem{subsubcase}{Subcase}[subcase]
\newtheorem{subsubsubcase}{Subcase}[subsubcase]
\def\qed{\hfill
\ifhmode\unskip\nobreak\fi\quad\ifmmode\Box\else$\Box$\fi\\ }
\newtheorem{theorem}{Theorem}
\newtheorem{cor}[theorem]{Corollary}
\newtheorem{lemma}[theorem]{Lemma}
\newtheorem{claim}{Claim}
\numberwithin{equation}{section}
\numberwithin{figure}{section}
\newtheorem{thm}[theorem]{Theorem}
\newtheorem*{thm*}{Theorem}
\newtheorem*{conj*}{Conjecture}
\newtheorem{conj}[]{Conjecture}
    \providecommand{\questionname}{Question}
\newcommand{\sU}{\mathcal{U}}
\begin{document}



\title{Strong Chromatic Index of Subcubic Planar Multigraphs}

\author{A.V. Kostochka}
\thanks{Department of Mathematics, University of Illinois, Urbana, IL, 61801,
USA.  This author's research is supported in part
by NSF grant  DMS-1266016 and by grants 12-01-00631 and 12-01-00448  of
the Russian Foundation for Basic Research.
E-mail address:  \texttt{kostochk@math.uiuc.edu}}

\author{X. Li}
\thanks{ Department of Mathematics, Huazhong Normal University, Wuhan, 430079,
China.
This author's research is supported in part by the Natural Science Foundation of China
(11171129)  and by Doctoral Fund of Ministry of Education of China
(20130144110001).
E-mail address: \texttt{xwli68@mail.ccnu.edu.cn}}

\author{W. Ruksasakchai}
\thanks{Department of Mathematics, Faculty of Science, KhonKaen University, KhonKaen, 40002, Thailand.  This author's research is supported in part by Development and Promotion of Science and Technology Talents Project (DPST).  E-mail address:  \texttt{watcharintorn1@hotmail.com}}

\author{M. Santana}
\thanks{Department of Mathematics, University of Illinois, Urbana, IL, 61801,
USA. This author's research is supported in part by the NSF grants DMS-1266016 ``AGEP-GRS'' and DMS 08-38434 ``EMSW21 - MCTP: Research Experience for Graduate Students.''  
E-mail address: \texttt{santana@illinois.edu.}}

\author{T. Wang}
\thanks{Institute of Applied Mathematics, Henan University, Kaifeng, 475004, P. R. China; College of Mathematics and Information Science, Henan University, Kaifeng, 475004, P. R. China;  This author's research is supported by the National Natural Science Foundation of China (11101125) and partially supported by the Fundamental Research Funds for Universities in Henan.  This research was done while the author was visiting the University of Illinois at Urbana-Champaign.  The author would like to thank Prof. Kostochka for his hospitality.
E-mail address: \texttt{wangtao@henu.edu.cn}}

\author{G. Yu}
\thanks{ Department of Mathematics, The College of William and Mary, Williamsburg,
VA, 23185, USA; Department of Mathematics, Huazhong Normal University,
Wuhan, 430079, China.
This author's research is supported in part by NSA grant
H98230-12-1-0226.
 E-mail address:  \texttt{gyu@wm.edu}}

\begin{abstract}
The strong chromatic index of a multigraph is the minimum $k$ such that the edge set can be $k$-colored requiring that each color class induces a matching.  We verify a conjecture of Faudree, Gy\'{a}rf\'{a}s, Schelp and Tuza, showing that every planar multigraph with maximum degree at most 3 has strong chromatic index at most 9, which is sharp.

This paper is to appear in European J. Combin. 51 (2016) 380--397.
\end{abstract}

\maketitle
\noindent{\small{Mathematics Subject Classification: 05C15 (05C10)}}{\small \par}

\noindent{\small{Keywords: subcubic graphs, strong edge-coloring, strong chromatic index, planar graphs.}}{\small \par}

{\begin{flushright}
Dedicated to the memory of Ralph J. Faudree.
\end{flushright}}


\section{Introduction}

All multigraphs in this paper are loopless.  A \emph{strong $k$-edge-coloring} of a multigraph $G$ is a coloring $\phi: E(G) \to [k]$ such that if any two edges $e_1$ and $e_2$ are either adjacent to each other or adjacent to a common edge, then $\phi(e_1) \neq \phi(e_2)$.  In other words, the edges in each color class form an induced matching in the original multigraph.  The \emph{strong chromatic index} of $G$, denoted by $\chi'_s(G)$, is the minimum $k$ for which $G$ has a strong $k$-edge-coloring.  This is equivalent to finding the chromatic number of the square of the line graph of $G$.

Fouquet and Jolivet \cite{FJ1, FJ2} introduced the notion of strong edge-coloring, which was used to solve a problem involving radio networks and their frequencies.  
More details on this application can be found in \cite{NKGB, R}.  

For general graphs, the greedy algorithm provides an upper bound on $\chi'_s$ of $2(\Delta - 1) + 2(\Delta - 1)^2 + 1$, where $\Delta$ denotes the maximum degree of the multigraph.  At a 1985 seminar in Prague, Erd\H{o}s and Ne\v{s}et\v{r}il  conjectured that in fact a stronger upper bound holds, which if true, is best possible (see \cite{Erdos, ErdosNesetril}).

\begin{conj}[Erd\H{o}s and Ne\v{s}et\v{r}il '85]
If $G$ is a graph with maximum degree $\Delta$, then 

$\chi'_s(G) \le 
\left\{ \begin{array}{ll}
\frac{5}{4}\Delta^2, & \textrm{ if } \Delta \textrm{ is even,}\\
\frac{5}{4}\Delta^2 - \frac{1}{2}\Delta + \frac{1}{4},& \textrm{ if } \Delta \textrm{ is odd}
\end{array}\right.$
\end{conj}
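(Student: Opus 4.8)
The plan is to split the problem into a ``large $\Delta$'' regime attacked probabilistically and a ``small $\Delta$'' regime attacked structurally, because the conjectured constant $5/4$ is forced by essentially one family of extremal graphs: the balanced blow-up of the $5$-cycle. First I would pin down this construction, replacing each vertex of $C_5$ by an independent set of $\lceil \Delta/2\rceil$ or $\lfloor \Delta/2\rfloor$ vertices and each edge of $C_5$ by a complete bipartite graph between the corresponding parts. Every two edges of this graph lie within distance two, so the entire edge set must receive distinct colors, which gives exactly $\frac54\Delta^2$ edges when $\Delta$ is even and matches the odd formula after rounding. Any proof of the upper bound must therefore be tight against this blow-up, and understanding why no denser local configuration forces more colors is the organizing principle of the whole argument.

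For the upper bound itself I would start from the greedy estimate $2(\Delta-1)^2 + 2(\Delta-1) + 1$ recorded above, which is about $2\Delta^2$, and try to drive the leading coefficient from $2$ toward $5/4$ using the semi-random (``nibble'') method. Concretely: give each edge a uniformly random tentative color from a palette of size roughly $c\Delta^2$, keep that color only when no edge within distance two chose the same tentative color, and apply the Lov\'asz Local Lemma to guarantee that a constant fraction of edges is permanently colored while every uncolored edge still sees many available colors in its distance-two neighborhood. Iterating this nibble and tracking the list of surviving available colors by a concentration inequality (Azuma or Talagrand) should let me shrink the palette round by round; this is precisely the route that yields bounds of the shape $(2-\varepsilon)\Delta^2$ for large $\Delta$.

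The small-$\Delta$ cases cannot be touched by probability and must be handled structurally. The cases $\Delta\le 2$ are immediate, and $\Delta=3$, where the conjecture predicts $10$, is the natural target for a minimal-counterexample analysis using reducible configurations and discharging — exactly the style of argument developed later in this paper for subcubic planar multigraphs. The hard part, and the reason the conjecture is still open, is controlling the leading constant across the entire range at once: current nibble arguments stall near $1.998\Delta^2$ (Molloy--Reed), far above $\frac54\Delta^2$, because after a few rounds the dependencies among the ``color is still available'' events grow too strong for the Local Lemma, and no known entropy-compression or discharging scheme degrades gracefully to the $C_5$-blow-up extremum for every $\Delta$. Closing the gap from roughly $1.998$ to $1.25$ in the leading coefficient is the genuine obstacle, so a complete proof is beyond present methods; what is realistically attainable is the asymptotic improvement for large $\Delta$ together with exact verification for each small fixed $\Delta$ by the structural machinery this paper exemplifies.
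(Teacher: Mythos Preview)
The statement you were asked to prove is a \emph{conjecture}, not a theorem: the paper presents it as the Erd\H{o}s--Ne\v{s}et\v{r}il conjecture from 1985 and does not prove it. There is therefore no ``paper's own proof'' to compare against, and indeed no complete proof exists in the literature. Your write-up correctly recognizes this, and what you have supplied is not a proof but an informed survey of the obstacles: the $C_5$-blow-up as the extremal construction, the nibble/Local Lemma route that currently stalls near $1.998\Delta^2$, and the need for separate structural arguments at small $\Delta$. All of that is accurate and well put, but you should be explicit at the outset that you are explaining why the problem is open rather than proposing a proof.

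One technical remark: your sketch of the probabilistic pipeline is sound as a description of the Molloy--Reed strategy, but the sentence ``Iterating this nibble \ldots\ should let me shrink the palette round by round'' overstates what the method actually delivers. The semi-random method with current concentration tools does not shrink the palette toward $\frac54\Delta^2$; it gets stuck at a constant strictly below $2$ (now improved to about $1.93$ by Bruhn--Joos, as the paper notes in a footnote) precisely because the available-color lists become correlated in a way that resists further iteration. So the gap is not just a matter of pushing harder on the same argument; a genuinely new idea is needed, which is the point you make at the end but which the middle paragraph slightly obscures.
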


When $G$ has maximum degree at most 3, the conjecture was verified by Andersen \cite{Andersen}, who proved the conjecture for multigraphs,  and independently by Hor\'{a}k, Qing and Trotter \cite{HQT}.  
In general,  the problem remains open with the best known upper bound due to Molloy and Reed \cite{MolloyReed} using probabilistic techniques.\footnote{Recently, Bruhn and Joos \cite{BJ} claim to have improved this bound to $1.93\Delta^2$.}

\begin{thm*}[Molloy and Reed '97]
For large enough $\Delta$, every graph $G$ with maximum degree $\Delta$ has $\chi'_s(G) \le 1.998\Delta^2$.
\end{thm*}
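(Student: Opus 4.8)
The plan is to translate the problem into ordinary vertex coloring. Let $H = L(G)^2$ be the square of the line graph of $G$, so that $V(H) = E(G)$ and two edges of $G$ are adjacent in $H$ exactly when they must receive distinct colors; then $\chi'_s(G) = \chi(H)$. The first step is to bound the maximum degree $D$ of $H$. An edge $e = uv$ conflicts only with edges incident to $u$ or $v$, or with edges incident to a neighbor of $u$ or $v$, and counting these gives $D \le 2\Delta(\Delta - 1) = 2\Delta^2 - 2\Delta$. The trivial greedy bound is then $\chi(H) \le D + 1 = 2\Delta^2 - 2\Delta + 1$, exactly the bound quoted in the introduction, so the entire content of the theorem lies in shaving off a small constant factor: to reach $1.998\Delta^2$ I need to save only a tiny fraction, about $0.001$, of the colors.

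The key structural observation is that the neighborhoods of $H$ are \emph{locally sparse}. Fix $e = uv$ and consider two edges $f, g$ that both conflict with $e$, say $f$ is incident to a neighbor $w$ of $u$ and $g$ is incident to a neighbor $x$ of $v$. If $w$ and $x$ are far apart in $G$, then $f$ and $g$ do not conflict with each other, so the pair $fg$ is a \emph{non}-edge of $H$ lying inside $N_H(e)$. I would count such non-edges to show that $H[N_H(e)]$ spans at most $(1 - \epsilon)\binom{D}{2}$ edges for some absolute constant $\epsilon > 0$; the savings comes precisely from pairs of edges that are each near $e$ but remote from one another.

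With local sparsity in hand, the proof invokes the probabilistic coloring machinery for graphs with sparse neighborhoods. The engine is a ``wasteful'' random coloring procedure: assign each vertex of $H$ a color chosen uniformly from a palette of size $(1 - c)D$, retain it only if no neighbor chose the same color, and then analyze the residual instance. Local sparsity guarantees that, in expectation, colors are repeated within each neighborhood, so that the number of distinct colors still available to a vertex exceeds the number of its uncolored neighbors by a positive fraction. Using Talagrand's inequality to show these quantities are concentrated, together with the Lov\'asz Local Lemma applied to the bad events ``some vertex has fewer available colors than uncolored neighbors,'' one extends the partial coloring and then iterates, driving the residual degree down until the leftover graph can be finished greedily. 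This yields $\chi(H) \le (1 - c)D < 1.998\Delta^2$ once $\Delta$ is large.

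The main obstacle is the probabilistic core: quantifying exactly how the sparsity parameter $\epsilon$ converts into a usable surplus of available colors, and then controlling the dependency structure so that the Local Lemma applies. The concentration step (bounding the deviation of the number of retained and available colors via Talagrand) and the bookkeeping across iterations — ensuring the sparsity and degree bounds are preserved as the procedure repeats — are where the real work lies. This is why the theorem requires $\Delta$ to be large, and why the method delivers the constant $1.998$ rather than the conjectured $\tfrac{5}{4}$.
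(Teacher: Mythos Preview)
The paper does not prove this theorem at all: it is stated purely as background, attributed to Molloy and Reed~\cite{MolloyReed}, with no argument beyond the phrase ``using probabilistic techniques.'' So there is no ``paper's own proof'' to compare against.

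That said, your outline is an accurate high-level summary of the Molloy--Reed method: pass to $H=L(G)^2$, bound its maximum degree by $2\Delta(\Delta-1)$, establish that neighborhoods in $H$ are locally sparse (each $N_H(e)$ induces at most $(1-\epsilon)\binom{D}{2}$ edges), and then feed this into the probabilistic machinery---a wasteful random partial coloring analyzed via concentration inequalities and the Local Lemma, iterated until the residual graph can be finished greedily. As a sketch this is correct, though of course the substance lies entirely in the quantitative details you gesture at in the last paragraph, and what you have written is a plan rather than a proof.
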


Faudree et al. \cite{FGST} show that when restricted to planar multigraphs,  $\chi'_s(G) \le 4\Delta + 4\mu$, where $\mu$ denotes the maximum number of parallel edges connecting a pair of vertices in $G$.  Additionally, they show that for every positive integer $k \ge 2$, there exists a planar graph $G$ with $\Delta = k$ and $\chi'_s(G) = 4\Delta - 4$.  

Borodin and Ivanova \cite{BorodinIvanova} show that if a planar graph $G$ has maximum degree at most $\Delta$ and girth (i.e. the length of a shortest cycle) at least $40\lfloor \frac{\Delta}{2}\rfloor + 1$, then $\chi'_s(G) \le 2\Delta - 1$.  

In regards to \emph{subcubic} graphs, i.e., graphs with maximum degree at most 3, Faudree et al. \cite{FGST} pose the following set of conjectures.

\begin{conj}[Faudree et al. '90]
Let $G$ be a subcubic graph.
\begin{enumerate}[label=\bfseries 2.\arabic*]
\item  $\chi'_s(G) \le 10$
\item  If $G$ is bipartite, then $\chi'_s(G) \le 9$
\item  If $G$ is planar, then $\chi'_s(G) \le 9$
\item  If $G$ is bipartite and the degree sum along every edge is at most 5, then $\chi'_s(G) \le 6$.
\item  If $G$ is bipartite with girth at least 6, then $\chi'_s(G) \le 7$.
\item  If $G$ is bipartite with large girth, then $\chi'_s(G) \le 5$.
\end{enumerate}
\end{conj}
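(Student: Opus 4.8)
The plan is to prove the planar subcubic case, item~2.3 in the stronger multigraph form announced in the abstract: every planar multigraph $G$ with $\Delta(G)\le 3$ satisfies $\chi'_s(G)\le 9$, and then to exhibit a planar subcubic multigraph attaining $9$ to show sharpness. I would attack the upper bound by the standard minimal-counterexample-plus-discharging method. Suppose the claim fails and let $G$ be a counterexample minimizing, say, $|V(G)|+|E(G)|$; thus $G$ has no strong $9$-edge-coloring, but every planar subcubic multigraph with fewer vertices and edges does. I may assume $G$ is connected and, since a degree-$1$ vertex's edge conflicts with at most $6<9$ other edges and can always be colored last, that $G$ has minimum degree at least $2$. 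Fix a plane embedding. The argument then splits into two parts: a list of \emph{reducible configurations} that cannot occur in $G$, and a discharging phase showing that a plane subcubic multigraph free of all these configurations cannot exist.

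For the reducibility part, the engine is the local color count. In a subcubic multigraph an edge $e=uv$ is in strong conflict only with edges incident to $u$ or $v$ and with edges incident to the remaining neighbors of $u$ and $v$; when all the relevant degrees are $3$ this is at most $4+8=12$ edges, matching the greedy bound $2(\Delta-1)+2(\Delta-1)^2+1=13$. So I would show: whenever $G$ contains a vertex, short face, or parallel pair whose deletion or contraction leaves some edge $e$ with fewer than $9$ colors forbidden in its conflict neighborhood, then $G$ is reducible — take a strong $9$-edge-coloring of the smaller graph (guaranteed by minimality), uncolor a few edges if needed, and greedily extend. Concretely I expect to forbid adjacent $2$-vertices, certain $2$-vertex/$3$-vertex patterns, triangles and short faces incident to low-degree vertices, and digons (parallel pairs) and triple edges, since these last concentrate conflicts; each such configuration should yield either an immediate contradiction or a reduction.

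For the discharging part, I would use Euler's formula in the form $\sum_{v}(\deg v-4)+\sum_{f}(|f|-4)=-8$, giving each vertex $v$ initial charge $\deg(v)-4\le -1$ and each face $f$ charge $|f|-4$, so that only faces of length at least $5$ carry positive charge. I would then design rules that push charge from these large faces to the (uniformly negative) vertices and to the short $3$- and $4$-faces. Using the reducibility list to guarantee that every short face and every $2$-vertex is adjacent to enough large faces to be compensated, I would check that after discharging every vertex and every face is nonnegative, contradicting the total $-8$.

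I expect the reducibility analysis to be the main obstacle: making the configuration list rich enough that the discharging closes, while keeping each configuration genuinely reducible, typically forces a long and delicate case analysis — made heavier here by the parallel edges intrinsic to the multigraph setting — and the extension step must be handled carefully because uncoloring a single edge can free colors for several of its neighbors at once. Finally, sharpness is separate and comparatively easy: I would exhibit one small planar subcubic multigraph, built from a short cycle with a few doubled edges so that some edge together with its conflict neighborhood induces a clique of size $9$ in the square of the line graph, for which no strong $8$-edge-coloring exists, giving $\chi'_s=9$.
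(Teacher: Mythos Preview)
Your overall architecture --- minimal counterexample, a list of reducible configurations, then discharging --- is exactly the paper's, but several of your concrete choices diverge from it in ways that matter.

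First, parallel edges are a non-issue: a multiple edge $e$ sees at most seven other edges, so deleting $e$ and coloring the rest gives an immediate extension. The paper's very first lemma shows the minimal counterexample is simple, so the ``heavy multigraph case analysis'' you anticipate never arises. Likewise, the paper eliminates triangles and all $4$-cycles outright (not merely short faces incident to low-degree vertices), so in the minimal counterexample every face has length at least $5$; your plan to push charge to ``short $3$- and $4$-faces'' is aimed at configurations that are already gone.

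Second, your Euler balance $\sum_v(\deg v-4)+\sum_f(|f|-4)=-8$ is a genuine handicap. With it, every $3$-vertex has charge $-1$ and must be fed by its faces; even after you know girth $\ge 5$, a $5$-face carries only $+1$ but has five incident $3$-vertices, so the natural rule ``each face gives $1/3$ to each incident $3$-vertex'' already drives $5$-faces to $-2/3$. The paper instead uses $\sum_v(2d(v)-6)+\sum_f(d(f)-6)=-12$, which makes $3$-vertices neutral. Then the only deficient objects are $2$-vertices (charge $-2$) and $5$-faces (charge $-1$), and the hard structural work is precisely to show that $2$-vertices never lie on $5$-, $6$-, or $7$-faces, that $2$-vertices on a common face are far apart, and that $5$-faces border only $7^+$-faces. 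With those lemmas, two rules suffice: each face gives $1$ to each incident $2$-vertex, and each face adjacent to a $5$-face gives it $1/5$.

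Finally, for sharpness you need no doubled edges: the complement of $C_6$ (the triangular prism) is a simple planar cubic graph with nine edges, any two of which are within distance two, so $\chi'_s=9$.
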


Andersen \cite{Andersen}, and independently Hor\'{a}k, Qing and Trotter \cite{HQT}, proved Conjecture 2.1.  Conjecture 2.2 was verified by Steger and Yu \cite{StegerYu}.  Conjecture 2.4 was confirmed by Wu and Lin \cite{WuLin} and was generalized by Nakprasit and Nakprasit \cite{NN}. 
The previously mentioned result of Borodin and Ivanova \cite{BorodinIvanova} verified Conjecture 2.6 for planar graphs.  The authors know of no results which pertain to Conjecture 2.5.  

The purpose of this paper is to verify Conjecture 2.3.  That is, we prove the following theorem, which is best possible by considering the complement of the cycle of length six.

\begin{thm}\label{thm:conj}
Every subcubic, planar multigraph $G$ with no loops has $\chi'_s(G) \le 9$.
\end{thm}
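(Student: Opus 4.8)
The plan is to argue by contradiction from a minimal counterexample, combining a list of reducible configurations with a discharging argument on the plane. Let $G$ be a counterexample to Theorem~\ref{thm:conj} minimizing $|V(G)| + |E(G)|$, fix a plane embedding, and write $[9]$ for the palette. By minimality every proper subgraph of $G$ has a strong $9$-edge-coloring, and $G$ is connected. The basic quantitative tool is a count of conflicts: for an edge $e = uv$, the edges that conflict with $e$ (those sharing an endpoint with $e$, or joined to $e$ by an edge) number at most $3(\deg u - 1) + 3(\deg v - 1)$, and this bound drops whenever a neighbor of $u$ or $v$ also has degree $2$. Applying this to a pendant edge ($\deg v = 1$) gives at most $6 < 9$ conflicting edges, so such an edge can always be (re)colored; hence $\delta(G) \ge 2$. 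The same count rules out the smallest low-degree configurations, such as a degree-$2$ vertex both of whose edges form a digon.

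The heart of the argument is a catalogue of reducible configurations describing how the degree-$2$ vertices — the only deficient vertices, since all degrees are at most $3$ — may sit relative to one another, to short (digon and triangle) faces, and to parallel edges. For each configuration I would delete a carefully chosen edge, or suppress a degree-$2$ vertex, invoke minimality to $9$-color the resulting smaller multigraph, and then argue that the removed edge(s) can be recolored after at most a bounded amount of local recoloring, since at each such edge the conflict count above is strictly below $9$. For instance, the edge joining two adjacent degree-$2$ vertices conflicts with at most $3(2-1)+3(2-1)=6<9$ edges and is therefore reducible; the savings produced by degree-$2$ neighbors are exactly what push the other configurations below the threshold. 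The intended output is a short list of forbidden local structures: two adjacent degree-$2$ vertices, a degree-$2$ vertex with both neighbors of degree $2$, a triangle carrying a degree-$2$ vertex, and prescribed interactions of degree-$2$ vertices with digons.

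Finally I would run a discharging argument. Assign to each vertex $v$ the charge $\deg(v) - 4$ and to each face $f$ the charge $\ell(f) - 4$; by Euler's formula the total charge is $4|E(G)| - 4|V(G)| - 4|F(G)| = -8 < 0$. Since every degree is at most $3$, all vertex charges are negative, with the deficit concentrated on degree-$2$ vertices and on digon and triangle faces, whereas long faces — which a sparse subcubic plane graph is forced to contain in abundance — carry positive charge. I would design rules sending charge from incident long faces and from degree-$3$ vertices toward degree-$2$ vertices and short faces, and then use the reducibility catalogue to certify that no element can remain a net sink: the only configurations producing a large negative final charge are precisely those already excluded as reducible. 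Every final charge is then nonnegative, contradicting the total $-8$.

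The step I expect to be the main obstacle is the reducibility analysis for \emph{clusters} of degree-$2$ vertices. Deleting one edge is routine, but when degree-$2$ vertices occur near triangles, digons, or one another, colors propagate along short paths and naive extension fails; making the reductions valid requires choosing the deletion cleverly and occasionally recoloring a bounded neighborhood, with a careful case split on the degrees of nearby vertices and on the presence of parallel edges. Dovetailing this catalogue with a discharging scheme that actually closes — so that every configuration surviving reduction is provably dischargeable — is the crux, and in the multigraph setting it is the extra cases created by parallel edges where most of the labor resides.
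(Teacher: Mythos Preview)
Your framework --- minimal counterexample plus discharging --- is correct, but the reducibility catalogue you outline is far too thin to make any discharging scheme close. Your forbidden configurations concern only $2$-vertices and their immediate interaction with digons and triangles; nothing in your list constrains the face structure when the graph is $3$-regular. Concretely, take a $3$-regular plane graph whose faces are all $5$-faces (the dodecahedron, say): none of your reducible configurations are present, yet under your charges $\deg(v)-4$ and $\ell(f)-4$ every vertex has charge $-1$, every face has charge $+1$, and there are more vertices than faces, so no rule can make all final charges nonnegative. Your own remark that ``degree-$3$ vertices'' would send charge betrays the problem: with your weighting they are themselves in deficit.

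What is actually needed --- and what the paper spends almost its entire length establishing --- are structural lemmas about \emph{faces}, not just about $2$-vertices. One first shows that parallel edges, triangles, and $4$-cycles are all reducible (so every face has length $\ge 5$); then, with substantial case analysis, that no $2$-vertex lies on a $5$-, $6$-, or $7$-face, that two $2$-vertices on a common face are at boundary distance $\ge 5$, and that a $5$-face is never adjacent to a $5$- or $6$-face. These are the lemmas that generate enough positive face charge to cover the deficits. The paper's charge function $2\deg(v)-6$, $\ell(f)-6$ is also better adapted to the problem, since it makes $3$-vertices and $6$-faces neutral, leaving only $2$-vertices and $5$-faces to be fed. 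Your proposal correctly identifies the architecture but misses essentially all of the hard reducibility lemmas; the ones you list are the warm-up, not the content.
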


The proof of this result yields a polynomial time algorithm in terms of the number of vertices that  will color any subcubic, planar multigraph using at most nine colors.  Theorem \ref{thm:conj} implies the following corollary.

\begin{cor}
Every subcubic, planar multigraph $G$ with no loops contains an induced matching of size at least $|E(G)|/9$.
\end{cor}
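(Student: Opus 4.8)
The plan is to derive this corollary directly from Theorem~\ref{thm:conj} via a pigeonhole argument, since all the substantive work has already been done in establishing the strong chromatic index bound. First I would invoke Theorem~\ref{thm:conj} to obtain a strong $9$-edge-coloring $\phi : E(G) \to [9]$ of $G$. By definition of a strong edge-coloring, this partitions $E(G)$ into color classes $E_1, E_2, \dots, E_9$ (where $E_i = \phi^{-1}(i)$, some of which may be empty), and each class $E_i$ is an induced matching in $G$.

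Next I would apply the pigeonhole principle: since the nine classes $E_1, \dots, E_9$ together cover all of $E(G)$, we have $\sum_{i=1}^{9} |E_i| = |E(G)|$, so at least one index $j$ satisfies $|E_j| \ge |E(G)|/9$. This class $E_j$ is simultaneously an induced matching (by the previous step) and has the required cardinality, which establishes the corollary.

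There is essentially no obstacle here beyond Theorem~\ref{thm:conj} itself; the corollary is an immediate structural consequence of the coloring bound, and the only ingredient is the averaging argument above. The one point worth stating explicitly is the translation between the two formulations: a strong edge-coloring is precisely a partition of $E(G)$ into induced matchings, so the minimum number of induced matchings needed to cover $E(G)$ equals $\chi'_s(G)$, and bounding this quantity by $9$ forces one of the matchings to capture at least a $1/9$ fraction of the edges.
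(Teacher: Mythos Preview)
Your proposal is correct and matches the paper's approach exactly: the paper simply states that Theorem~\ref{thm:conj} implies the corollary without spelling out a proof, and the pigeonhole argument you give is precisely the intended one-line derivation. There is nothing to add or change.
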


This corollary extends a result of Kang, Mnich and M\"{u}ller \cite{KMM} to loopless multigraphs.  Joos, Rautenbach and Sasse \cite{JRS} later showed that the above lower bound holds for all subcubic graphs, thus proving a conjecture of Henning and Rautenbach \cite{HR}.

Hocquard et al. \cite{HMRV} provide upper bounds on the strong chromatic index of subcubic graphs based on the maximum average degree.  These results, which strengthen those of Hocquard and Valicov \cite{HV}, provide stronger upper bounds on the strong chromatic index of subcubic planar graphs based on girth.  In addition, they prove Conjecture 2.3 for subcubic planar graphs with no induced $C_4$ or $C_5$.  This result verifies Conjecture 2.3 for subcubic planar graphs with girth at least six, a statement independently obtained by Hud\'{a}k et al. \cite{HLSS}.

We present our result as follows. In Section \ref{sec:prelim}, we provide the notation we will use along with preliminary results. The remaining sections assume the existence of a minimal counterexample.  Section \ref{struct1} contains basic properties of a minimal counterexample, including the fact that it has no cycles of length three or four.  The lemmas in Section \ref{struct2} will show that if a face has a 2-vertex on its boundary, then the face has length at least eight, and additionally, if two 2-vertices exist on a face, then the distance between them is at least five on the face.  Section \ref{struct3} contains two lemmas showing that every face of length five is surrounded by faces of length at least seven.  Lastly, Section \ref{sec:proof} contains a discharging proof based on the lemmas presented in Sections \ref{struct1}, \ref{struct2} and \ref{struct3}.


\section{Preliminaries and notation}\label{sec:prelim}

In the proof of Theorem \ref{thm:conj}, we will often remove vertices or edges from a minimal counterexample and obtain a strong edge-coloring of the remaining multigraph.  To aid us, we introduce some notation and preliminary facts that we will use in explanations.

We will use some lower case Greek letters, such as $\alpha,\beta, \gamma, \delta$, to denote arbitrary colors, and we will use $\phi, \sigma, \psi$ to denote colorings.  Also an $i$\emph{-vertex} is a vertex of degree $i$ in our multigraph, and a $j$\emph{-face} is a face of length $j$ in our plane multigraph.  An $i^+$\emph{-vertex} and $j^+$\emph{-face} is a vertex of degree at least $i$ and a face of length at least $j$, respectively.

A coloring of a multigraph $G$ is \emph{good}, if it is a strong edge-coloring of $G$ using at most 9 colors.  
A \emph{partial} coloring of a graph $G$ is a coloring of any subset of $E(G)$, and we say it is a \emph{good partial coloring} of $G$, if for any colored edges $e_1$ and $e_2$ that are either adjacent to each other or adjacent to a common edge, we have $e_1$ and $e_2$ receiving different colors.  Given edges $e, e'$ in $G$, we say that $e$ \emph{sees} $e'$ if either $e$ and $e'$ are adjacent, or there is another edge $e''$ adjacent to both $e$ and $e'$.  Additionally, we will also say that $e$ \emph{sees} a color $\alpha$, if $e$ sees an edge $e'$ for which $\phi(e') = \alpha$, where $\phi$ is a partial coloring.

Let $\phi$ be a good partial coloring of a graph $G$.  For $v \in V(G)$, let $\sU_\phi(v)$  denote the set of colors used on the edges incident to $v$.  For an uncolored edge $e \in E(G)$, let $A_\phi(e)$ to denote the set of colors that can be used on $e$ to extend $\phi$ to a new good partial coloring of $G$.  For adjacent vertices $u,v$, let $\Upsilon_\phi(u,v) := \sU_\phi(u) \setminus \{\phi(uv)\}$.  That is, $\Upsilon_\phi(u,v)$ denotes the set of colors used on edges incident to $u$ other than $uv$.   As $\phi$ is a good partial coloring, $\Upsilon_\phi(u,v)$ and $\Upsilon_\phi(v,u)$ are disjoint.  Often we will refer to only one partial coloring which will not be named.  In these cases we will suppress the subscripts in the above notations.

As mentioned, we will remove vertices and edges from a multigraph $G$ to obtain a good partial coloring, say $\phi$.  Often, we will consider $|A_\phi(e)|$ for every uncolored $e$ in $G$, in order to apply the well known result of Hall \cite{Hall} in terms of systems of distinct representatives.  

\begin{thm*}[Hall '35]
Let $A_1, \dots, A_n$ be $n$ subsets of a set $U$.  A system of distinct representatives of $\{A_1,\dots, A_n\}$ exists if and only if  for all $k, 1 \le k \le n$ and every choice of subcollection of size $k$, $\{A_{i_1}, \dots, A_{i_k}\}$, we have $|A_{i_1} \cup \dots \cup A_{i_k}| \ge k$.
\end{thm*}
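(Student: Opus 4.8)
The plan is to prove the two directions separately, with essentially all of the work living in the sufficiency direction. For necessity, suppose a system of distinct representatives $x_1, \dots, x_n$ exists, with $x_i \in A_i$ and the $x_i$ pairwise distinct. Then for any subcollection $\{A_{i_1}, \dots, A_{i_k}\}$, the elements $x_{i_1}, \dots, x_{i_k}$ are $k$ distinct members of $A_{i_1} \cup \dots \cup A_{i_k}$, so this union has size at least $k$. This direction needs no cleverness.

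For sufficiency I would induct on $n$. The base case $n = 1$ is immediate: the hypothesis gives $|A_1| \ge 1$, so any element of $A_1$ serves as its representative. For the inductive step, assume the statement holds for every family of fewer than $n$ sets satisfying the union condition, and suppose $A_1, \dots, A_n$ satisfy it. I would split into two cases according to whether the condition ever holds with equality on a proper subcollection.

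\emph{Case 1 (no critical subfamily).} Every subcollection of size $k$ with $1 \le k < n$ has union of size at least $k+1$. Here I pick any $x_n \in A_n$, delete $x_n$ from each of $A_1, \dots, A_{n-1}$, and observe that the resulting $n-1$ sets still satisfy the union condition, since deleting a single element shrinks any union by at most one while we started with strict surplus. By the inductive hypothesis these $n-1$ sets have an SDR avoiding $x_n$, which together with $x_n$ gives an SDR of the full family. \emph{Case 2 (a critical subfamily exists).} Some subcollection of size $k$, with $1 \le k < n$, has union $B$ satisfying $|B| = k$ exactly. By induction this critical subfamily has an SDR, and being $k$ distinct elements inside a $k$-element set, this SDR must use up all of $B$. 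For the remaining $n-k$ sets I would replace each by its intersection with $U \setminus B$ and check that the reduced family still satisfies the union condition: if some $m$ of the reduced sets had union of size smaller than $m$, then those $m$ original sets together with the $k$ critical sets would have union of size smaller than $m+k$, contradicting the hypothesis applied to $m+k$ sets. By induction the reduced family has an SDR, which lives entirely outside $B$ and is therefore disjoint from the representatives chosen for the critical subfamily; concatenating the two produces the desired SDR.

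The main obstacle is the bookkeeping in Case 2: one must verify both that the critical subfamily's representatives exhaust $B$ and that deleting $B$ preserves the union condition for the complementary sets. The crucial point is that the bound for the complementary family of $m$ sets follows from the bound for the combined family of $m+k$ sets, which is precisely where the hypothesis for subcollections of \emph{every} size (not just size $n$) gets used. The clean dichotomy between Cases 1 and 2 is what lets the induction close. I note that an alternative route is to phrase the claim as a bipartite matching statement and argue via augmenting paths, which additionally yields the polynomial-time construction of an SDR that the applications in later sections implicitly rely upon; however, the inductive argument above is the most self-contained.
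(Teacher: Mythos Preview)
Your inductive argument is correct and is the standard proof of Hall's theorem. However, the paper does not actually prove this statement: Hall's theorem is quoted in Section~\ref{sec:prelim} as a classical result (with a citation to \cite{Hall}) and is used as a black box throughout the later sections, where the phrase ``by SDR'' signals an appeal to it. So there is no ``paper's own proof'' to compare against; you have supplied a valid proof where the paper simply cites the literature.
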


This will give a coloring of the remaining uncolored edges such that for every pair of uncolored edges $e_1$ and $e_2$, they will receive distinct colors from $A_\phi(e_1)$ and $A_\phi(e_2)$, respectively.  Such an extension of $\phi$ is a good coloring of $G$ and yields the desired result.  Thus, when left in a situation in which we can apply Hall's Theorem, we will say that we obtain a good coloring of $G$ by \emph{SDR}.


\section{Basic Properties}\label{struct1}

Everywhere below we assume $G$ to be a subcubic, planar multigraph contradicting Theorem \ref{thm:conj}.  Among all such counterexamples, we assume that $G$ has the fewest vertices, and over all such counterexamples, has the fewest edges.  $G$ is connected, as otherwise we can color each component by the minimality of $G$, and so obtain a good coloring of $G$.  As $G$ is planar, we assume $G$ to be a \emph{plane} multigraph in all the following statements.  That is, we consider $G$ together with an embedding of $G$ into the plane.

In this section, we will show several properties of $G$, including that $G$ is simple, has no small cycles and the distance between any two 2-vertices is at least three, a fact that we will strengthen in a later section.  Similar statements are proven in \cite{HMRV, HV, HLSS} while considering minimal counterexamples with different properties.

\begin{lemma}\label{muledges}
$G$ has no multiple edges, i.e., $G$ is a simple graph.
\end{lemma}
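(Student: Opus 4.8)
The plan is to argue by contradiction, exploiting the minimality of $G$. Suppose $G$ contains a pair of vertices $u$ and $v$ joined by at least two parallel edges. Since $G$ is subcubic, each of $u$ and $v$ has degree at most $3$, so at least two of the (at most three) edges at each of $u,v$ are these parallel edges. I would first reduce to the structurally simplest configuration: let $e_1, e_2$ be two parallel edges between $u$ and $v$. Because every color class must be an induced matching, $e_1$ and $e_2$ see each other and hence must receive distinct colors in any good coloring; moreover any edge incident to $u$ or $v$ is adjacent to both $e_1$ and $e_2$. The key observation is that the set of edges that $e_1$ sees and the set that $e_2$ sees are \emph{identical}, since an edge adjacent to a common edge of $e_1$ must (via $u$ or $v$) also be adjacent to a common edge of $e_2$. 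This symmetry is what makes the multiple edge ``cheap'' to recolor.

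Next I would delete one of the parallel edges, say $e_2$, to form $G' = G - e_2$. The graph $G'$ is still subcubic, planar, and loopless, and it has fewer edges than $G$ (with the same number of vertices); in fact if deleting $e_2$ drops $u$ or $v$ to an isolated or pendant situation, $G'$ is still smaller in the $(|V|, |E|)$ lexicographic order used to define $G$. Hence by minimality $G'$ admits a good coloring $\phi$. I then want to extend $\phi$ to $G$ by assigning a color to the uncolored edge $e_2$, which is a strong $k$-edge-coloring problem for a single edge: I need a color in $A_\phi(e_2)$. The count is the crux. The color on $e_2$ must avoid all colors on edges that $e_2$ sees; but every such edge is also seen by $e_1$, and the edges seen by $e_1$ together with $e_1$ itself already form a legitimately colored configuration under $\phi$ restricted to $G'$, because $e_1$ was colored and $e_2$ was absent. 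So the forbidden set for $e_2$ is contained in the union of $\Upsilon_\phi(u,v)$, $\Upsilon_\phi(v,u)$, the color $\phi(e_1)$, and the colors on edges at distance two through the neighbors of $u$ and $v$.

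I would then carry out the explicit bound on $|A_\phi(e_2)|$. With maximum degree $3$, $u$ has at most one neighbor besides $v$ and $v$ has at most one neighbor besides $u$; call them $x$ and $y$. The edges $e_2$ can see are: the other edge(s) at $u$ (at most one, namely $ux$), the other edge(s) at $v$ (at most one, $vy$), the edge $e_1$, and edges at distance two reached through $x$ and through $y$ (at most two at each of $x$ and $y$). Tallying, $e_2$ sees at most $1 + 1 + 1 + 2 + 2 = 7$ distinctly colored edges, so at most $7$ colors are forbidden and at least $9 - 7 = 2 \ge 1$ colors remain available. Therefore $A_\phi(e_2) \neq \emptyset$, we color $e_2$, and obtain a good coloring of $G$, contradicting that $G$ is a counterexample.

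The main obstacle I anticipate is the careful case analysis of exactly how many distinct edges $e_2$ sees, depending on the degrees of $u$, $v$, $x$, and $y$ and on whether $x=y$ (which cannot happen without creating a short cycle or further multiplicity, but must be ruled out) and on whether further parallel edges occur at $x$ or $y$. If there are three or more parallel edges between $u$ and $v$, the count only improves, so the binding case is exactly two parallel edges with both $u$ and $v$ of degree $3$. I would verify that even in this worst case the number of seen colors stays at most $7$, using that $e_1$ and $e_2$ genuinely share their entire ``second neighborhood,'' so no double-counting inflates the true forbidden set beyond what a single deleted edge would require. Once that bound is secured, the extension is immediate and the contradiction follows.
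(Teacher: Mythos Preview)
Your approach is correct and essentially identical to the paper's: delete one parallel edge, color the smaller multigraph by minimality, and observe that the deleted edge sees at most seven colored edges so a color remains. The paper compresses this to two sentences, and your worries about $x=y$ or additional parallel edges are unnecessary since those cases only lower the count.
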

\begin{proof}
Suppose that $e$ is a parallel edge in $G$.  By the minimality of $G$, $G - e$ has a good coloring.  Since $e$ sees at most seven edges in $G$, we can extend this good coloring to $G$.
\end{proof}

\begin{lemma}\label{delta}
$G$ has minimum degree at least 2.
\end{lemma}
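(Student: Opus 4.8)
The plan is to argue by contradiction via the minimality of $G$. Suppose $G$ has a vertex $v$ of degree at most $1$. The case of an isolated vertex cannot occur since $G$ is connected with at least one edge (it is a counterexample, hence has edges), so I may assume $\deg(v) = 1$, with $vu$ the unique edge at $v$. First I would delete $v$ (and the pendant edge $vu$) to obtain a smaller subcubic planar multigraph $G' = G - v$. By the minimality of $G$, the graph $G'$ admits a good coloring $\phi$, i.e., a strong edge-coloring using at most $9$ colors. The goal is to extend $\phi$ to the single uncolored edge $e = vu$, thereby producing a good coloring of $G$ and contradicting that $G$ is a counterexample.

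The key step is to bound the number of colors that $e = vu$ sees in $G$, so that at least one of the $9$ colors remains available. Since $v$ has degree $1$, the edge $e$ is adjacent only to the other edges incident to $u$, of which there are at most $\deg_G(u) - 1 \le 2$. The edges that $e$ sees are these adjacent edges together with the edges adjacent to them (the "distance-two" edges through $u$'s neighbors). Counting carefully: $u$ has at most two neighbors besides $v$, each of which has degree at most $3$, contributing at most $2$ further edges apiece. This gives at most $2 + 2\cdot 2 = 6$ edges seen by $e$, hence at most $6$ forbidden colors. Since $6 < 9$, the available list $A_\phi(e)$ is nonempty, and I can color $e$ with any color not among those seen.

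I would conclude that $\phi$ extends to a good coloring of all of $G$, contradicting the choice of $G$ as a counterexample; therefore $G$ has no $1$-vertex, and combined with connectivity (ruling out isolated vertices), the minimum degree of $G$ is at least $2$. The only genuinely delicate point is getting the "sees" count exactly right—making sure not to double-count edges and correctly accounting for the fact that $e$ sees edges at distance two only through the neighbors of $u$, not through $v$ (which has no other incident edges). Even with a loose count the bound stays comfortably below $9$, so this step should present no real obstacle; the argument is a routine deletion-and-extension of the same flavor as the proof of Lemma \ref{muledges}.
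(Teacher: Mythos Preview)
Your argument is correct and matches the paper's proof essentially verbatim: delete the $1$-vertex, color the smaller graph by minimality, and observe that the pendant edge sees at most six colored edges so a color remains. The only addition is your explicit handling of the degree-$0$ case via connectivity, which the paper omits as obvious.
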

\begin{proof}
Suppose that $v$ is a $1$-vertex and $u$ is the neighbor of $v$. Then $G - v$ has a good coloring.  Since $uv$ sees at most six edges in $G$, we can extend this good coloring to $G$.
\end{proof}

\begin{lemma}\label{cut-vertex}
$G$ has no cut-vertex and no cut-edge.
\end{lemma}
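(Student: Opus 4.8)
The plan is to establish the cut-edge case first and then deduce the absence of cut-vertices from it. For the reduction, suppose $w$ were a cut-vertex. Since $G$ is connected with maximum degree at most $3$, the at most three edges incident to $w$ are distributed among the (at least two) components of $G - w$, each component receiving at least one such edge. A short counting argument then forces some component to be joined to $w$ by exactly one edge: with two components the split is $2{+}1$ or $1{+}1$, and with three components it is $1{+}1{+}1$, so in every case at least one component meets $w$ in a single edge $e$. That edge $e$ is then a cut-edge of $G$, contradicting the cut-edge case. Hence it suffices to rule out cut-edges.

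So assume $e = uv$ is a cut-edge, and let $G_1, G_2$ be the components of $G - e$ containing $u$ and $v$. Each is a proper subgraph on fewer vertices, so by the minimality of $G$ each has a good coloring; since no edge of $G_1$ sees any edge of $G_2$ in $G - e$, combining them gives a good coloring $\phi$ of $G - e$, and we remain free to apply any permutation of the colors to the edges of $G_2$ without destroying goodness. By Lemma \ref{delta} both $u$ and $v$ have at least one further incident edge, and by $\Delta \le 3$ at most two each. Let $C_u$ (resp.\ $C_v$) be the set of colors $\phi$ assigns to the edges at $u$ (resp.\ $v$) other than $e$, so $|C_u|, |C_v| \le 2$, and let $S_u$ (resp.\ $S_v$) be the set of all colors $e$ sees on the $u$-side (resp.\ $v$-side): the colors on the at most two edges at $u$ together with the at most four edges at distance two from $e$ on that side, so $|S_u|, |S_v| \le 6$.

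The only obstruction to merging the two sides and coloring $e$ is that, once $e$ is present, the edges at $u$ and the edges at $v$ see one another through $e$; thus we need the image of $C_v$ disjoint from $C_u$, and we need a color for $e$ avoiding everything it sees, namely $S_u$ together with the image of $S_v$. The main step, and the point requiring the most care, is to produce one permutation achieving both at once. I would first pick a color $\gamma \notin S_u$ for $e$ (possible since $|S_u| \le 6$), and then seek a permutation $\pi$ of the nine colors with $\gamma \notin \pi(S_v)$ and $\pi(C_v) \cap C_u = \emptyset$. Finding $\pi$ reduces to finding an injection of $S_v$ into the eight colors other than $\gamma$ that sends the at most two colors of $C_v$ into the at least six colors avoiding $C_u \cup \{\gamma\}$; since $|S_v| \le 6$ there is ample room, so a greedy assignment (placing $C_v$ first) produces such an injection, which extends to the desired $\pi$.

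Relabeling $G_2$ by $\pi$ and assigning $\gamma$ to $e$ then yields a good coloring of $G$: goodness within $G_1$ and within $G_2$ is preserved under the permutation, the only new conflicts created by the presence of $e$ are between edges at $u$ and edges at $v$, which $\pi(C_v) \cap C_u = \emptyset$ resolves, and $\gamma$ itself avoids every edge $e$ sees. This contradicts the assumption that $G$ is a counterexample. I expect the only delicate part to be the bookkeeping of exactly which edges $e$ sees across the bridge, confirming that no cross-conflicts other than the $u$-side/$v$-side pairs through $e$ arise between the two components.
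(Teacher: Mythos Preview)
Your proof is correct and follows essentially the same strategy as the paper: reduce the cut-vertex case to the cut-edge case via subcubicness, split along the bridge, color each side by minimality, then permute one side's palette to merge. The only difference is cosmetic: the paper attaches the bridge $v_1v_2$ as a pendant edge to \emph{each} auxiliary subgraph before invoking minimality, so the bridge is already colored in both and one merely permutes to make $\sU(v_1)$ and $\sU(v_2)$ overlap only in that color, whereas you delete the bridge, color the two components, permute, and then color the bridge last---a bit more bookkeeping (the $S_u,S_v,\gamma,\pi$ accounting), but equally valid.
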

\begin{proof}
Since $G$ is subcubic, the existence of a cut-vertex implies the existence of a cut-edge.  Thus, it suffices to suppose that $G$ has a cut-edge, say $v_1v_2$.  For $i = 1,2$, let $H_i$ be the component of $v_1v_2$ containing $v_i$.  By Lemma \ref{delta}, $|V(H_i)| \ge 2$.  Define $G_1$ to be the graph consisting of $H_1$ together with $v_2$ and the edge $v_1v_2$.  Similarly define 
$G_2$ to be the graph consisting of $H_2$ together with $v_1$ and the edge $v_1v_2$. 

By the minimality of $G$, $G_1$ and $G_2$ have good colorings, $\phi_1$ and $\phi_2$, respectively.  We may assume $\sU_{\phi_1}(v_1) \subseteq \{1,2,3\}$,  $\sU_{\phi_2}(v_2) \subseteq \{1,4,5\}$ with $\phi_1(v_1v_2) = \phi_2(v_1v_2) = 1$.  Merging these two colorings yields a good coloring of $G$.
\end{proof}

\begin{lemma}\label{2-edge-cut}
If $\{e_1, e_2\}$ is an edge-cut in $G$, then $e_1, e_2$ are adjacent to each other.
\end{lemma}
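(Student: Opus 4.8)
The plan is to argue by contradiction. Suppose $\{e_1,e_2\}$ is an edge-cut with $e_1,e_2$ \emph{non}-adjacent. First I would pin down the structure of the cut. Since $G$ has no cut-edge (Lemma \ref{cut-vertex}), $G-e_1$ is connected, so deleting $e_2$ from it yields at most two components; hence $G-\{e_1,e_2\}$ has exactly two components $H_1,H_2$, and each of $e_1,e_2$ must join them (else one of them would already be a cut-edge). Writing $e_1=u_1v_1$ and $e_2=u_2v_2$, I may label so that $u_1,u_2\in V(H_1)$ and $v_1,v_2\in V(H_2)$, and non-adjacency gives $u_1\neq u_2$, $v_1\neq v_2$. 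The single most useful observation, which I would isolate first, is that an edge $f$ of $H_1$ sees an edge $h$ of $H_2$ in $G$ \emph{only} through a cut edge: namely iff ($f$ is at $u_1$ and $h$ at $v_1$) or ($f$ is at $u_2$ and $h$ at $v_2$), since any common edge incident to both must have one end in each component. This collapses all cross-component interaction to two local ``interfaces''.

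Next I would build two smaller graphs and invoke minimality. Let $G_1$ be obtained from $H_1$ by adding a new vertex $z$ adjacent to $u_1,u_2$ (collapsing all of $H_2$ to $z$), and symmetrically let $G_2$ be $H_2$ plus a new vertex $w$ adjacent to $v_1,v_2$. Both are loopless subcubic planar multigraphs: $z,w$ have degree $2$, the vertices $u_1,u_2,v_1,v_2$ keep their degrees, and $z$ can be placed in the face of $H_1$ that previously contained $H_2$. Both have fewer vertices than $G$ because $|V(H_i)|\ge 2$, so by minimality they admit good colorings $\phi_1,\phi_2$. Write $a=\phi_1(u_1z)$, $b=\phi_1(u_2z)$, $c=\phi_2(wv_1)$, $d=\phi_2(wv_2)$; these ``stubs'' stand in for $e_1,e_2$. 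Let $S_i$ (resp.\ $T_i$) be the colors $\phi_1$ (resp.\ $\phi_2$) places on the $H_1$-edges at $u_i$ (resp.\ $H_2$-edges at $v_i$). Since $u_1z$ sees, through $z$, every $H_1$-edge at $u_2$ and vice versa, I obtain the clean facts $a,b\notin S_1\cup S_2$ and $c,d\notin T_1\cup T_2$, with $a\neq b$ and $c\neq d$, while $|S_i|,|T_i|\le 2$.

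The heart of the argument, and the step I expect to be the main obstacle, is merging $\phi_1$ and $\phi_2$ into a good coloring of $G$. I would color $H_1$ by $\phi_1$, recolor $H_2$ by $\pi\circ\phi_2$ for a suitable permutation $\pi$ of $[9]$, and set $\phi(e_1)=a$, $\phi(e_2)=b$. Goodness demands two things of $\pi$: (A) the stub colors agree across the cut, $\pi(c)=a$ and $\pi(d)=b$, so the color on each of $e_1,e_2$ is legal from the $H_1$-side (via $\phi_1$) and the $H_2$-side (via $\phi_2$) simultaneously; and (B) the interfaces are compatible, $S_1\cap\pi(T_1)=\emptyset$ and $S_2\cap\pi(T_2)=\emptyset$. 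Such a $\pi$ exists by a short Hall/SDR argument: after forcing $\pi(c)=a,\pi(d)=b$ (legitimate since $c\neq d$, $a\neq b$, $c,d\notin T_1\cup T_2$), each element of $T_1\cup T_2$ still has at least $3$ admissible images in $[9]\setminus\{a,b\}$ (it must avoid only $S_1$, only $S_2$, or $S_1\cup S_2$, and $|S_1|,|S_2|\le2$), which comfortably satisfies Hall's condition for the at most four constrained elements; then extend $\pi$ to a bijection arbitrarily.

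Finally I would verify goodness by checking the few conflict types. Pairs inside $H_1$ or inside $H_2$ are handled by $\phi_1$ and $\pi\circ\phi_2$: any conflict between two $H_1$-edges in $G$ is already present in $G_1$, because a common incident cut edge forces both to meet at $u_1$ or at $u_2$. Cross-component pairs are exactly the two interfaces controlled by (B). The edges $e_1,e_2$ are safe because $a$ (resp.\ $b$) avoids everything $e_1$ (resp.\ $e_2$) sees on each side; in particular $a\notin S_2$ and $a\notin\pi(T_2)$ cover the cases where $u_1u_2\in E(H_1)$ or $v_1v_2\in E(H_2)$ cause $e_1$ to see the far endpoints of $e_2$, and $a\neq b$ separates $e_1$ from $e_2$. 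This good coloring of $G$ contradicts the choice of $G$, so $e_1,e_2$ must be adjacent. I expect the only delicate points to be the bookkeeping that cross-component conflicts reduce exactly to (B) and the existence of the aligning permutation; the two reductions to $G_1,G_2$ are routine.
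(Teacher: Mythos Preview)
Your proof is correct and follows essentially the same approach as the paper: build the two auxiliary graphs $G_1,G_2$ by collapsing the far side of the cut to a single degree-$2$ vertex, color each by minimality, and then permute the colors on one side so the two colorings merge across the cut. The only difference is cosmetic: the paper finds the aligning permutation by a short explicit case split (on whether $|\sU(v_1)\cup\sU(v_2)|\le 5$ or $=6$), whereas you package the same count into a Hall/SDR argument; both exploit that the at most four interface colors on each side leave ample room in $[9]\setminus\{a,b\}$.
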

\begin{proof}
If not, then we have an edge-cut $\{u_{1}w_{1},u_{2}w_{2}\}$ in $G$ that is a matching.  We may assume that $u_1$ and $u_2$ are in the same component of $G - \{u_1w_1, u_2w_2\}$ so that we can define $H_u$ to be the component of $G - \{u_1w_1, u_2w_2\}$ containing $u_1$ and $u_2$.  Let $H_w = G - H_u$.  We may then let $G_u$ be the graph consisting of $H_u$ together with a new vertex $w$ whose neighborhood is $\{u_1,u_2\}$.  Similarly, let $G_w$ be the graph consisting of $H_w$ together with a new vertex $u$ whose neighborhood is $\{w_1,w_2\}$.  Observe that $G_u$ and $G_w$ are subcubic, planar multigraphs, and so by the minimality of $G$, $G_u$ and $G_w$ have good colorings $\phi_u$ and $\phi_w$, respectively.  

Now, if $|\sU_{\phi_w}(w_1) \cup \sU_{\phi_w}(w_2)| \le 5$, then we may assume that  $\sU_{\phi_w}(w_1) \cup \sU_{\phi_w}(w_2) \subseteq [5]$ with $uw_i$ being colored $i$.  Since $|\sU_{\phi_u}(u_1) \cup \sU_{\phi_u}(u_2)| \le 6$, we may similarly assume that $\sU_{\phi_u}(u_1) \cup \sU_{\phi_u}(u_2) \subseteq \{1,2,6,7,8,9\}$ with $wu_i$ being colored $i$.  We may then merge these two colorings to obtain a good coloring of $G$ in which $u_iw_i$ receives color $i$ for $i \in \{1,2\}$.

So, we  have $|\sU_{\phi_w}(w_1) \cup \sU_{\phi_w}(w_2)| = |\sU_{\phi_u}(u_1) \cup \sU_{\phi_u}(u_2)| = 6$.  This implies $u_1u_2, w_1w_2 \notin E(G)$.  Thus, we may assume that $\sU_{\phi_u}(u_1) = \{1,3,4\},  \sU_{\phi_w}(w_2) = \{2,3,4\}, \sU_{\phi_u}(u_2) = \{2,5,6\}, \sU_{\phi_w}(w_1) = \{1,5,6\}$ with $uw_i, wu_i$ being colored $i$.  Again, we can merge these two colorings to obtain a good coloring of $G$ in which $u_iw_i$ receives color $i$.
\end{proof}

\begin{lemma}\label{NoTriangle}
$G$ has no triangles.
\end{lemma}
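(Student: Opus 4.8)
The plan is to assume $G$ contains a triangle $v_1v_2v_3$ and derive a contradiction, organizing the argument by how many of $v_1,v_2,v_3$ are $3$-vertices (each is a $2$- or $3$-vertex by Lemma~\ref{delta}). The cases in which a $2$-vertex lies on the triangle are quick consequences of the connectivity lemmas. If all three are $2$-vertices, the triangle is a component and $G=K_3$, which is good-colorable. If exactly one, say $v_1$, is a $3$-vertex with outside neighbor $u_1$, then $v_1u_1$ is a cut-edge, contradicting Lemma~\ref{cut-vertex}. If exactly two, say $v_1,v_2$, are $3$-vertices with outside neighbors $u_1,u_2$, then $\{v_1u_1,v_2u_2\}$ is an edge-cut separating the triangle from the rest of $G$; by Lemma~\ref{2-edge-cut} these two edges are adjacent, forcing $u_1=u_2=:u$. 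Then either $\deg u=2$, so $\{u,v_1,v_2,v_3\}$ is a component with only five edges and is trivially good-colorable, or $\deg u=3$ and the third edge at $u$ is a cut-edge, again contradicting Lemma~\ref{cut-vertex}.

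The remaining case, where $v_1,v_2,v_3$ are all $3$-vertices with outside neighbors $u_1,u_2,u_3$, is the heart of the proof. Here I would \emph{delete all three triangle vertices}. Since $G-\{v_1,v_2,v_3\}$ has fewer vertices, the minimality of $G$ gives it a good coloring $\phi$, and it remains to color the six deleted edges consistently with $\phi$: the three triangle edges $e_1=v_2v_3$, $e_2=v_1v_3$, $e_3=v_1v_2$ and the three spokes $f_i=v_iu_i$. A short check shows that in $G$ these six edges pairwise see one another (each $e_i$--$e_j$ pair is adjacent, each $f_i$--$f_j$ pair sees via a triangle edge, and each $e_i$ sees each $f_j$), so in the final coloring they must all receive distinct colors. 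I therefore aim to apply Hall's Theorem (SDR) to the available-color sets $A_\phi(e_i)$ and $A_\phi(f_i)$.

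Counting the colored edges each uncolored edge sees yields the required sizes. A triangle edge such as $e_1=v_2v_3$ sees colored edges only among those incident to $u_2$ or $u_3$, hence at most four, giving $|A_\phi(e_1)|\ge 5$; a spoke such as $f_1=v_1u_1$ sees only the (at most two) colored edges at $u_1$ and the (at most four) colored edges one step beyond $u_1$, giving $|A_\phi(f_1)|\ge 3$. With these bounds, every Hall condition for subcollections of size $1,\dots,5$ is immediate, since any such subcollection contains a set of size at least its cardinality. The one delicate point, and the main obstacle, is the top-level condition $|A_\phi(e_1)\cup\cdots\cup A_\phi(f_3)|\ge 6$. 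I would obtain this from a color-incidence count at the outside neighbors: a color forbidden for all three triangle edges must appear on colored edges at at least two of $u_1,u_2,u_3$, yet these three vertices have degree at most two in the smaller graph and so together carry at most six color-incidences; hence at most three colors are globally forbidden for the triangle edges, forcing $|A_\phi(e_1)\cup A_\phi(e_2)\cup A_\phi(e_3)|\ge 6$ and a fortiori making the union of all six sets have size at least $6$.

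Finally I would observe that any coincidence among $u_1,u_2,u_3$ only decreases the number of forbidden colors (or else produces a small cut contradicting Lemma~\ref{cut-vertex} or~\ref{2-edge-cut}), so the estimates above hold in general. Hall's Theorem then provides a system of distinct representatives, i.e.\ a proper extension of $\phi$ that colors all six edges and yields a good coloring of $G$, contradicting that $G$ is a counterexample. Thus the technical crux of the whole argument is precisely the verification of the full Hall condition through the color count at the outside neighbors, while the low-degree cases are dispatched immediately by the connectivity lemmas already established.
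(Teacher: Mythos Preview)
Your proof is correct and follows essentially the same approach as the paper: delete the three triangle vertices, bound the available-color sets ($\ge 5$ for triangle edges, $\ge 3$ for spokes), and finish by SDR, with the only nontrivial Hall condition being the full union of size $\ge 6$. Your incidence count (each globally forbidden color must appear at two of the $u_i$, and there are at most six incidences) is exactly equivalent to the paper's observation that otherwise $\sU(u_0),\sU(u_1),\sU(u_2)$ would be three pairwise-disjoint $2$-sets inside a $4$-set. The only difference is in dispatching the case of a $2$-vertex on the triangle: the paper simply deletes that $2$-vertex and extends greedily (each of its two edges sees at most five colored edges), which is shorter than your detour through Lemmas~\ref{cut-vertex} and~\ref{2-edge-cut}, but both routes are valid.
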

\begin{proof}%
Suppose that $w_{0}w_{1}w_{2}$ is a triangle in $G$.  If $w_0$ is a 2-vertex, then as $G - w_{0}$ has a good coloring, and since each of $w_0w_1$ and $w_0w_2$ see at most colored 5 edges in $G$, we can extend this good coloring to $G$.  Thus, each $w_i$ is a 3-vertex, and we may assume $N_{G}(w_{0}) = \{u_{0}, w_{1}, w_{2}\}$, $N_{G}(w_{1}) = \{w_{0}, u_{1}, w_{2}\}$ and $N_{G}(w_{2}) = \{w_{0}, w_{1}, u_{2}\}$.  

Now, $G - \{w_0,w_1,w_2\}$ has a good coloring, which applied to $G$ is a good partial coloring such that $|A(w_iu_i)| \ge 3$ and $|A(w_iw_{i+1})| \ge 5$ for $i \in \{0,1,2\}$ taken modulo 3.  If there are at least six colors available on these six uncolored edges, then we can extend to a good coloring of $G$ by SDR.  So we may assume $A(w_0w_1) = A(w_1w_2) = A(w_2w_0)$ and $|A(w_0w_1)| = 5$.  Without loss of generality, we may assume $A(w_0w_1) = \{1,2,3,4,5\}$.  However, this implies that for $i \in \{0,1,2\}$,  $\sU(u_i)$ and $\sU(u_{i+1})$ partition $\{6,7,8,9\}$, which cannot happen.  
\end{proof}

\begin{lemma}\label{separating}
$G$ has no separating cycle of length four or five.
\end{lemma}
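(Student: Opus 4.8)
The plan is to assume that $G$ contains a separating cycle $C = v_1 v_2 \cdots v_k$ with $k \in \{4,5\}$ and to derive a contradiction by reassembling good colorings of the two sides of $C$. First I would record that $C$ is chordless: a chord of a $4$- or $5$-cycle would create a triangle, contradicting Lemma \ref{NoTriangle}. Hence each $v_i$ carries at most one edge off $C$, its off-cycle edge $t_i$, which lies strictly inside or strictly outside $C$. Writing $I$ and $E$ for the strictly interior and strictly exterior parts and $G_I := G[V(C) \cup I]$, $G_E := G[V(C) \cup E]$, both are proper subgraphs (since $C$ separates, each side is nonempty), and both are subcubic, planar and loopless, so by the minimality of $G$ each has a good coloring $\phi_I, \phi_E$.

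The key structural observation I would isolate is that the interaction between the two sides is very limited. Since no edge joins $I$ to $E$, any edge $e''$ witnessing that an interior edge $e$ \emph{sees} an exterior edge $f$ must be incident to a vertex of $C$, which forces $e''$ to be a cycle edge and $e, f$ to be off-cycle edges at its two endpoints. Thus the only cross-conflicts are between off-cycle edges $t_i, t_{i+1}$ at adjacent cycle vertices lying on opposite sides of $C$. Because $k \le 5$ and $C$ is chordless, the edges of $C$ pairwise see one another, so in each coloring they form a rainbow; I would then permute colors so that $\phi_I$ and $\phi_E$ agree on $E(C)$, after which the only possible defects are equalities $\phi(t_i) = \phi(t_{i+1})$ along cycle edges whose endpoints are oppositely placed.

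Before attempting any recoloring I would cut down the configurations using the earlier cut lemmas. Let $p$ and $q$ be the numbers of off-cycle edges entering $I$ and $E$; these $p$ (resp. $q$) edges form an edge-cut isolating $I$ (resp. $E$). By Lemma \ref{cut-vertex} neither is a bridge, so $p, q \ge 2$, and by Lemma \ref{2-edge-cut} if $p = 2$ (resp. $q = 2$) the two edges must be adjacent, i.e., share their endpoint inside $I$ (resp. $E$). Since each $v_i$ carries at most one off-cycle edge, $p + q \le k \le 5$, so the surviving cases are exactly $(p,q) \in \{(2,2),(2,3),(3,2)\}$, and every side with exactly two off-cycle edges has them meeting at a common vertex (non-adjacent on $C$, again by Lemma \ref{NoTriangle}).

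It remains to refute each of these finitely many configurations, and this is where I expect the real work to be. With the rainbow on $C$ fixed, I would uncolor the at most five off-cycle edges and recolor them by SDR, computing $A(e)$ for each. The subtlety—and the main obstacle—is that in the shared-endpoint cases the two off-cycle edges meeting at a common vertex $x$ each see all $k$ cycle colors (their own two, plus the other two through $x$), so both are forced into the $9 - k \ge 4$ non-cycle colors, where they must simultaneously avoid the colors of the deeper edges at $x$ and the colors of the opposite-side off-cycle edges. The associated conflict graph on the off-cycle edges can be as dense as $K_4$, so a naive one-sided availability count can fail; I anticipate needing to recolor off-cycle edges on both sides together (equivalently, to choose the alignment on $C$ and the free colors jointly) and to run a short case analysis over $(p,q)$ and the cyclic interior/exterior pattern around $C$ to verify Hall's condition. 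Once the off-cycle edges are recolored consistently, the merged coloring is good, contradicting the choice of $G$.
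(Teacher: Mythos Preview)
Your setup tracks the paper's proof through the point where one side of $C$ carries exactly two off-cycle edges meeting at a common interior vertex $x$ (with the two $C$-endpoints of these edges non-adjacent on $C$). From there the two arguments diverge: you propose to merge good colorings of $G_I$ and $G_E$ by aligning them on $C$ and then recoloring the off-cycle edges by SDR, and you explicitly flag the Hall-condition case analysis over the $(p,q)$ patterns as work still to be done. The paper never touches colorings at this stage at all.

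The paper's finish is a one-step structural shortcut that you are one observation away from. Having found $x$, note that $x$ must be a $2$-vertex: a third edge $xy$ with $y \in I$ would make $x$ a cut-vertex (the only edges from $I$ to $V(G)\setminus I$ are the two edges at $x$), contradicting Lemma~\ref{cut-vertex}. Hence $N(x)$ consists of two vertices of $C$, and on a $4$- or $5$-cycle any two vertices are at distance at most $2$; you already ruled out distance $1$ via Lemma~\ref{NoTriangle}, so $x$ closes a $4$-cycle containing the $2$-vertex $x$. The paper disposes of that configuration by a preliminary remark at the very start of the proof: if a $4$-cycle $w_1w_2w_3w_4$ has $w_1$ a $2$-vertex, then in any good coloring of $G-w_1$ each of $w_1w_2,\,w_1w_4$ sees at most seven colored edges, so $|A(w_1w_2)|,|A(w_1w_4)|\ge 2$ and the coloring extends. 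That is the whole lemma.

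So your plan is not wrong in principle, but the recoloring/merging you identified as ``the real work'' is unnecessary and, as you yourself note, not yet carried out. Argue about the degree of $x$ rather than about colorings, and prepend the short observation that no $4$-cycle contains a $2$-vertex; then there is nothing left to do.
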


\begin{proof}
We first show that $G$ has no 4-cycle with a 2-vertex.  Suppose that $w_{1}w_{2}w_{3}w_{4}$ is a 4-cycle.   If $w_1$ is a 2-vertex, then $G - w_1$ has a good coloring, such that $|A(w_1w_2)|$, $|A(w_4w_1)|$ $\ge 2$, and we can extend this to a good coloring of $G$.  Thus, if $G$ has a 4-cycle, then each vertex of the cycle is a 3-vertex.  We will use this below to show that $G$ has no separating 4-cycle or 5-cycle.

If on the contrary, $G$ has a separating 4-cycle or 5-cycle, call it $C$.  By Lemma \ref{NoTriangle}, $C$ has no chords, and as $G$ is subcubic, each vertex of $C$ is incident to at most one edge not on $C$.   Since $\lfloor \frac{5}{2}\rfloor=  2$, by symmetry we may assume that there are at most two edges inside $C$ that are incident to vertices on $C$ (recall that $G$ is assumed to be embedded in the plane).  If there is exactly one such edge, then $G$ has a cut-edge, contradicting Lemma \ref{cut-vertex}.  So, we have two such edges, which are in fact cut-edges, and by Lemma \ref{2-edge-cut}, these edges  share a common endpoint, say $u$, inside of $C$.  Now, $u$ is a 2-vertex, as otherwise it would be a cut-vertex with a cut-edge.  However, $u$ together with the vertices of $C$ has either a triangle or a 4-cycle containing a 2-vertex, contradicting Lemma \ref{NoTriangle} or the above, respectively.  Thus, $G$ has no separating 4-cycle or 5-cycle.  
\end{proof}

\begin{lemma}\label{No4cycle}
$G$ has no 4-cycle.
\end{lemma}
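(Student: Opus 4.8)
The plan is to suppose that $C=w_1w_2w_3w_4$ is a $4$-cycle in $G$ and derive a contradiction. First I would record the structure already forced by the earlier lemmas: in the proof of Lemma~\ref{separating} we saw that every $4$-cycle of $G$ consists entirely of $3$-vertices, and Lemma~\ref{separating} itself guarantees that $C$ is non-separating, so (as $G$ has no cut-vertex by Lemma~\ref{cut-vertex}) one side of $C$ is a face. By Lemma~\ref{NoTriangle}, $C$ has no chord, so each $w_i$ has a unique third neighbor $u_i\notin C$, and all four spokes $w_iu_i$ leave $C$ on the same side. If two of the $u_i$ coincided, or two were adjacent, then the edges incident to the $w_i$ would see strictly fewer colored edges, so these configurations only enlarge the available lists; I would therefore dispose of them first as easy subcases and afterwards assume the $u_i$ are distinct and pairwise non-adjacent.

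Next I would delete $w_1,w_2,w_3,w_4$. The resulting graph has fewer vertices than $G$, so by minimality it has a good coloring $\phi$, which I regard as a good partial coloring of $G$ leaving uncolored exactly the four cycle edges $w_iw_{i+1}$ and the four spokes $w_iu_i$. Each spoke $w_iu_i$ can see a colored edge only through $u_i$, namely among the at most two edges at $u_i$ and the at most two further edges at each of their endpoints, for a total of at most six; hence $|A(w_iu_i)|\ge 3$. Each cycle edge $w_iw_{i+1}$ can see, among colored edges, only the at most two edges at $u_i$ and the at most two at $u_{i+1}$, so $|A(w_iw_{i+1})|\ge 5$. Whenever these lists are rich enough to satisfy Hall's condition, I would extend $\phi$ to a good coloring of $G$ by \emph{SDR}, contradicting the choice of $G$.

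The remaining work, and the main obstacle, is the extremal case in which these bounds are tight and the lists coincide so heavily that Hall's condition fails. Here I would first analyze the conflict pattern of the eight uncolored edges: the four cycle edges pairwise see one another and see every spoke, whereas opposite spokes need not conflict, so the two diagonal pairs of spokes may reuse colors. Tightness of $|A(w_iu_i)|=3$ forces each $u_i$ to be a $3$-vertex whose six nearby edges carry six distinct colors, and a failure to extend forces the three-element sets $A(w_1u_1),\dots,A(w_4u_4)$ to essentially coincide. As in the concluding argument of Lemma~\ref{NoTriangle}, this rigidity would compel the colors appearing around the $u_i$ to be partitioned among their neighborhoods in a way that cannot be realized, yielding the contradiction. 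I expect this tight case --- showing that the forced color pattern around the four vertices $u_i$ is impossible --- to be the crux of the proof.
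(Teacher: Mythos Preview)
Your plan has a genuine gap at exactly the point you flag as ``the crux.'' You propose to delete $w_1,\dots,w_4$, take a good coloring $\phi$ of the smaller graph, and then finish the tight case by a parity argument ``as in the concluding argument of Lemma~\ref{NoTriangle}.'' That argument worked for the triangle because the partition constraints $\sU(u_i)\cup\sU(u_{i+1})=\{6,7,8,9\}$ were indexed around an odd cycle, which is impossible. For a $4$-cycle the analogous constraints $\Upsilon(u_i,w_i)\cup\Upsilon(u_{i+1},w_{i+1})=\{6,7,8,9\}$ (for all $i$ mod $4$) are perfectly consistent: set $\Upsilon(u_1,w_1)=\Upsilon(u_3,w_3)=\{6,7\}$ and $\Upsilon(u_2,w_2)=\Upsilon(u_4,w_4)=\{8,9\}$. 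With this pattern (and each $u_i$ a $3$-vertex seeing six distinct colors $\{4,\dots,9\}$), you get $A(w_iu_i)=\{1,2,3\}$ for every $i$ and $A(w_iw_{i+1})=\{1,2,3,4,5\}$ for every $i$. This partial coloring genuinely cannot be extended: the four cycle edges are pairwise conflicting and each conflicts with every spoke, so the cycle edges must receive four distinct colors avoiding whatever the spokes use; but the spokes, forming a $C_4$ with common list $\{1,2,3\}$, must use at least two of those colors, leaving at most three colors in $\{1,\dots,5\}$ for the four pairwise-conflicting cycle edges. So your tight case is both locally realizable and non-extendable, and no triangle-style parity obstruction rules it out.

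The paper sidesteps this by modifying the auxiliary graph: instead of just deleting $x_0,\dots,x_3$, it also \emph{adds} the edge $y_0y_2$ before invoking minimality. The good coloring of $G'$ then lets you assign $x_0y_0$ and $x_2y_2$ the same color (the one on $y_0y_2$) and, crucially, forces $\Upsilon(y_0,x_0)\cap\Upsilon(y_2,x_2)=\emptyset$. That single extra constraint kills precisely the tight pattern above (which needs $\Upsilon(u_1,w_1)=\Upsilon(u_3,w_3)$) and is what makes the remaining four-edge extension go through. Your outline is missing this auxiliary-edge idea; without it, the final case does not close.
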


\begin{proof}
Suppose that $x_0x_1x_2x_3$ is a 4-cycle in $G$.  By Lemma \ref{separating}, this cycle is a 4-face and as is shown in the proof of Lemma \ref{separating}, each $x_i$ is a 3-vertex.  As a result, we let $y_i$ denote the third neighbor of $x_i$, which is not on this 4-cycle.  By Lemmas \ref{NoTriangle} and \ref{separating}, the $y_i$'s are distinct and $y_0y_2, y_1y_3 \notin E(G)$.  Let $G'$ denote the graph obtained from $G$ by removing $x_0,x_1,x_2,x_3$ and adding the edge $y_0y_2$.  Observe that $G'$ is a subcubic, planar multigraph, and so by the minimality of $G$, $G'$ has a good coloring.  Ignoring $y_0y_2$, we have a good partial coloring of $G$ that we  extend by coloring $x_0y_0, x_2y_2$ with the same color that $y_0y_2$ received.  This extended coloring is a good partial coloring, and we will refer to it as $\phi$.  As $|A_\phi(x_1y_1)|, |A_\phi(x_3y_3)| \ge 2$, we can greedily color these two edges and obtain another good partial coloring, which we will call $\sigma$.  

Note that the edges of the 4-cycle are the only uncolored edges of $G$ under $\sigma$, and $|A_\sigma(x_ix_{i+1})| \ge 2$ for each $i \in \{0,1,2,3\}$ taken modulo 4.  Additionally $\sU_\sigma(y_0) \cap \sU_\sigma(y_2) = \{\sigma(x_0y_0)\}$.  So, without loss of generality, we may assume that $\sU_\sigma(y_0) \subseteq \{1,2,3\}$ and $\sU_\sigma(y_2) \subseteq \{1,4,5\}$.  

Suppose that $|A_\sigma(x_0x_1) \cup A_\sigma(x_2x_3)| = 2$ so that without loss of generality, $A_\sigma(x_0x_1) = A_\sigma(x_2x_3) = \{8,9\}$. This implies $$\sU_\sigma(y_0) \cup \sU_\sigma(y_1) \cup \{\sigma(x_3y_3)\} =  \sU_\sigma(y_2) \cup \sU_\sigma(y_3) \cup \{\sigma(x_1y_1)\}= [7]$$ and additionally $\Upsilon_\sigma(y_1,x_1) = \{4,5\}, \Upsilon_\sigma(y_3,x_3) = \{2,3\}$.  However, this implies $|A_\sigma(x_1x_2)|$, $|A_\sigma(x_3x_0)| \ge 4$, and we can obtain a good coloring of $G$ by coloring the edges $x_0x_1, x_2x_3$, $x_1x_2$, $x_3x_0$ in this order.

So we  have $|A_\sigma(x_0x_1) \cup A_\sigma(x_2x_3)| \ge 3$ and by symmetry $|A_\sigma(x_1x_2) \cup A_\sigma(x_3x_0)| \ge 3$.  We may assume that $|A_\sigma(x_0x_1) \cup A_\sigma(x_1x_2) \cup A_\sigma(x_2x_3) \cup A_\sigma(x_3x_0)| \le 3$, otherwise we can obtain a good coloring of $G$ by SDR.

Now, if $|A_\sigma(x_0x_1)| = 2$, then $\Upsilon_\sigma(y_0,x_0) = \{2,3\}$, and additionally, $2,3 \notin \sU_\sigma(y_1) \cup \{\sigma(x_3y_3)\}$.  Since $\sU_\sigma(y_2) \subseteq \{1,4,5\}$, we have $2,3 \in A_\sigma(x_1x_2)$, but $2,3 \notin A_\sigma(x_0x_1)$.  Thus, $|A_\sigma(x_0x_1) \cup A_\sigma(x_1x_2)| \ge 4$, a contradiction.  So, $|A_\sigma(x_0x_1)| = 3$, and by symmetric arguments, we have $A_\sigma(x_0x_1) = A_\sigma(x_1x_2) = A_\sigma(x_2x_3) = A_\sigma(x_3x_0)$.

If $\Upsilon_\sigma(y_0,x_0) \subseteq \sU_\sigma(y_1) \cup \{\sigma(x_3y_3)\}$, then $|A_\sigma(x_0x_1)| \ge 4$, a contradiction.  Thus, say $2 \notin \sU_\sigma(y_1) \cup \{\sigma(x_3y_3)\}$.  However, $2 \notin \sU_\sigma(y_2)$ so that $2 \in A_\sigma(x_1x_2)\setminus A_\sigma(x_0x_1)$, again a contradiction.  Thus, in all cases we can extend $\sigma$ and obtain a good coloring of $G$.
\end{proof}

\begin{lemma}\label{distance>=3}
The distance between any two 2-vertices is at least three.
\end{lemma}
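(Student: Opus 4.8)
The plan is to argue by contradiction, following the same deletion-and-extension template used in all of the preceding lemmas. Suppose $u$ and $v$ are $2$-vertices with $\mathrm{dist}(u,v) \le 2$. Since $G$ has no multiple edges (Lemma~\ref{muledges}) and no triangles (Lemma~\ref{NoTriangle}), the two cases to handle are (i) $u$ and $v$ are adjacent, and (ii) $u$ and $v$ have a common neighbor $w$. In each case I would delete the offending vertices, invoke minimality of $G$ to color the smaller graph, and then show the handful of uncolored edges can be recolored/extended, either greedily or by SDR via Hall's Theorem.

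For case (i), write $N(u) = \{u', v\}$ and $N(v) = \{v', u\}$ where $u' \neq v'$ (else we'd have a triangle or multi-edge). Delete both $u$ and $v$ and color $G - \{u,v\}$, which is a good partial coloring of $G$ leaving the three edges $u'u$, $uv$, $vv'$ uncolored. The key count is how many colors each uncolored edge sees: the middle edge $uv$ sees only the edges at $u'$ and $v'$ plus the edges adjacent to those — but since $u,v$ are $2$-vertices, the number of colored edges seen is small, so I expect $|A(u'u)|, |A(uv)|, |A(vv')| \ge 3$ or so. With three available colors on each of three edges I can finish by SDR, checking the one degenerate configuration in which the available sets coincide, analogously to the endgame in Lemmas~\ref{NoTriangle} and~\ref{No4cycle}.

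For case (ii), write $N(u) = \{w, u'\}$, $N(v) = \{w, v'\}$. Here $w$ may be a $2$-vertex or a $3$-vertex; if $w$ is a $3$-vertex let $w'$ be its third neighbor. The natural move is to delete $u$ and $v$ (and possibly add an edge to preserve structural constraints, as in Lemma~\ref{No4cycle}), color the rest by minimality, and then extend to the uncolored edges $u'u$, $uw$, $wv$, $vv'$. Again the degree-$2$ hypotheses keep the number of edges each uncolored edge sees small, so the available lists should be large enough to apply Hall. I would use Lemmas~\ref{NoTriangle} and~\ref{No4cycle} to guarantee that $u', v', w'$ are distinct and non-adjacent in the relevant ways, which is what makes the color lists spread out rather than collapse.

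The main obstacle, as in the earlier lemmas, is the tight degenerate subcase where the availability sets fail to satisfy Hall's condition only because several of them coincide and are of minimum size. In that situation I expect to argue by a parity/partition obstruction: a collapse of the lists would force the colors used at the outer neighbors $u', v'$ (and $w'$) to partition a fixed small color set in an impossible way, exactly the kind of contradiction reached at the end of Lemma~\ref{NoTriangle}. Getting the bookkeeping right on precisely which colors each of $u'u, uw, wv, vv'$ can see — and showing the forced equalities lead to an impossible partition — is where the real work lies; the rest is routine counting enabled by the absence of short cycles.
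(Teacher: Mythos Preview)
Your plan is essentially the paper's: delete the offending vertices, color the smaller graph by minimality, then extend. The paper's execution is cleaner than what you sketch---in case (i) it deletes only $v$ (leaving just two edges to color greedily), and in case (ii) it deletes $u$, $v$, \emph{and} their common neighbor $x$, after which the five uncolored edges $uu', vv', xx', ux, vx$ see at most $6,6,6,4,4$ colored edges respectively and can be colored greedily in that order, so no SDR check or degenerate-case partition argument is needed; the ``real work'' you anticipate never actually materializes.
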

\begin{proof}
Let $u, v$ be 2-vertices in $G$.  Suppose first that $u,v$ are adjacent, and let $w$ be the other neighbor of $v$, which is possibly the other neighbor of $u$ as well.  Now, $G - v$ has a good coloring, and since $uv$ sees at most 5 colored edges in $G$ and $vw$ sees at most seven colored edges in $G$, we can extend this good coloring to $G$.  Thus, $u$ and $v$ are at least distance two apart in $G$.

Now suppose $u$ and $v$ are distance two apart and are both incident to a 3-vertex $x$.  Let $N_{G}(u) = \{u', x\}$, $N_{G}(v) = \{v',x\}$ and $N_{G}(x) = \{u, v, x'\}$, where $u', v', x'$ are not necessarily distinct.  By the minimality of $G$, $G - \{u,v,x\}$ has a good coloring such that $uu', vv', xx'$ each see at most six different colors, and $ux, vx$ each see at most four different colors.  Thus, we can extend this good partial coloring to $G$ by coloring the edges $uu', vv', xx', ux, vx$ in this order.  
\end{proof}

\section{Faces Without 2-Vertices}\label{struct2}

In this section, we show that if a face has a 2-vertex, then that face must have length at least eight.  Additionally, if a face does have two 2-vertices on its boundary, then the distance between them along the face is at least five.

\begin{lemma}\label{No2on5cycle}
Every vertex of a 5-cycle in $G$ is a 3-vertex.
\end{lemma}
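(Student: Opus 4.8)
The plan is to argue by contradiction following the same template used throughout Section~\ref{struct1}: assume the $5$-cycle carries a $2$-vertex, delete a small piece, obtain a good coloring of the smaller graph by minimality, and show it extends. So suppose $C = w_1w_2w_3w_4w_5$ is a $5$-cycle in $G$ and, without loss of generality, $w_1$ is a $2$-vertex. By Lemma~\ref{NoTriangle} and Lemma~\ref{No4cycle}, $G$ has no triangles and no $4$-cycles, so $C$ is chordless and the five vertices of $C$ together with their off-cycle neighbors are reasonably spread out; this chordlessness is what will keep the relevant color-constraint sets large.

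First I would record the local structure. Since $w_1$ is a $2$-vertex, $N_G(w_1) = \{w_2, w_5\}$. By Lemma~\ref{distance>=3}, the two neighbors of $w_1$ cannot themselves be $2$-vertices (and no two $2$-vertices are within distance two), so $w_2, w_5$ are $3$-vertices, and more generally I would use the distance bound to control how many $2$-vertices can appear among $w_2,\dots,w_5$. The natural move is to delete $w_1$ and invoke minimality: $G - w_1$ is a subcubic planar multigraph with fewer vertices, hence has a good coloring $\phi$. This leaves only the two edges $w_1w_2$ and $w_1w_5$ uncolored in $G$. The task reduces to showing $\phi$ extends to these two edges.

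The key step is the counting of available colors. I would show that each of $w_1w_2$ and $w_1w_5$ sees only a bounded number of colored edges, so that $|A_\phi(w_1w_2)|$ and $|A_\phi(w_1w_5)|$ are both large enough to finish. In a subcubic graph an edge sees at most seven other edges; here, because $w_1$ has degree $2$ and $C$ is chordless with no triangles or $4$-cycles nearby, the two edges at $w_1$ see strictly fewer edges than the generic bound, and they see each other. Concretely I expect $|A_\phi(w_1w_2)|, |A_\phi(w_1w_5)| \ge 2$ after accounting for the single shared constraint between them, so I can finish by either a short greedy argument or by \emph{SDR} via Hall's Theorem: if the union of the two availability sets has size at least $2$ I can pick distinct representatives, and if they were to coincide as a single pair I would derive a contradiction from the degree and girth constraints. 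This mirrors exactly the extension arguments in Lemmas~\ref{muledges}, \ref{delta}, and the $4$-cycle analysis in Lemma~\ref{No4cycle}.

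The main obstacle I anticipate is the degenerate case where the two uncolored edges compete for the same small set of colors, i.e.\ where $A_\phi(w_1w_2)$ and $A_\phi(w_1w_5)$ are both of size exactly $2$ and happen to be equal, leaving no room for distinct representatives. Resolving this will require a more careful look at which colors the surrounding edges actually use: I would examine the colors on the edges incident to $w_2$ and $w_5$ and to their off-cycle neighbors, and use the no-triangle and no-$4$-cycle conditions (Lemmas~\ref{NoTriangle} and~\ref{No4cycle}) to show that the neighborhoods of $w_2$ and $w_5$ are sufficiently disjoint that their color sets cannot conspire to block both edges simultaneously. If the simple deletion of $w_1$ proves too tight in this corner case, the fallback is the standard contraction trick used in Lemma~\ref{No4cycle}: delete the $2$-vertex together with part of the cycle and add an auxiliary edge joining two off-cycle neighbors to force their colors apart, then recolor the cycle edges in a prescribed order. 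I expect the whole argument to be short, with essentially all the difficulty concentrated in ruling out this single tight configuration.
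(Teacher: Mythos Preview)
Your primary approach---delete only $w_1$ and extend to the two pendant edges---does not give the availability bounds you claim. With $w_1$ a $2$-vertex and $w_2,\dots,w_5$ all $3$-vertices (forced by Lemma~\ref{distance>=3}), the edge $w_1w_2$ sees the eight colored edges $w_2w_3$, $w_2y_2$, $w_3w_4$, $w_3y_3$, the two further edges at $y_2$, together with $w_4w_5$ and $w_5y_5$ (the latter two via the uncolored edge $w_1w_5$). None of these collapse under the girth hypotheses, so in the worst case $|A_\phi(w_1w_2)| = 1$, not $\ge 2$; symmetrically $|A_\phi(w_1w_5)| \ge 1$. The two lists can coincide as a singleton: the five edges the two lists share ($w_2w_3$, $w_2y_2$, $w_3w_4$, $w_4w_5$, $w_5y_5$) can take five colors and the remaining three on each side ($w_3y_3$ plus two at $y_2$ versus $w_4y_4$ plus two at $y_5$) can reuse the same three colors on both sides, since in $G-w_1$ the edges at $y_2$ need not see the edges at $y_5$. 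There is no contradiction to extract ``from the degree and girth constraints''; the obstruction is purely a coloring coincidence that the structural lemmas do not forbid.

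What the paper actually does is precisely your fallback, and it is not optional. One removes \emph{all five} cycle vertices and adds an auxiliary edge $y_2y_4$ in the smaller planar graph; the color on $y_2y_4$ is then reused on \emph{two} edges of $G$ (namely $x_2y_2$ and $x_4x_5$), which manufactures an extra coincidence that boosts the later availability counts. After greedily coloring $x_3y_3$ and $x_4y_4$, one gets $|A(x_1y_1)|,|A(x_2x_3)|,|A(x_3x_4)|\ge 2$, $|A(x_1x_2)|\ge 3$, $|A(x_5x_1)|\ge 5$, and the remaining obstruction (when $A(x_1y_1)$ and $A(x_3x_4)$ overlap) is handled by coloring those two edges with a common color before finishing by SDR. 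You should treat this auxiliary-edge construction as the main argument, not a contingency, and carry out the order-of-coloring analysis explicitly.
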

\begin{proof}
By Lemma \ref{separating}, it suffices to consider 5-faces.  Suppose on the contrary that $x_{1}x_{2}x_{3}x_{4}x_{5}$ is a 5-face in $G$ and $x_{5}$ is a $2$-vertex.   Lemma \ref{distance>=3} implies that each $x_i$ other than $x_5$ has a third neighbor $y_i$.   By Lemmas \ref{NoTriangle}, \ref{separating} and \ref{No4cycle}, these $y_i$ are distinct, not on our cycle and pairwise nonadjacent except for possibly $y_1y_4$.  

Let $G'$ denote the graph obtained from $G$ by removing $x_1,x_2,x_3,x_4,x_5$ and adding the edge $y_2y_4$.  Observe that $G'$ is a subcubic, planar multigraph, and so by the minimality of $G$, $G'$ has a good coloring.  Ignoring $y_2y_4$, we have a good partial coloring of $G$ that we can extend by coloring $x_4x_5, x_2y_2$ with the color of $y_2y_4$.  Call this good partial coloring, $\phi$.  Note that $|A_\phi(x_3y_3)|, |A_\phi(x_4y_4)|  \ge2$ so that we can color these two edges greedily to obtain a new good partial coloring $\sigma$.  

Now, $|A_\sigma(x_1y_1)|, |A_\sigma(x_2x_3)|, |A_\sigma(x_3x_4)| \ge 2, |A_\sigma(x_1x_2)| \ge 3$ and $|A_\sigma(x_5x_1)| \ge 5$.  If $A_\sigma(x_1y_1) \cap A_\sigma(x_3x_4) = \emptyset$, then we can extend this to a good coloring of $G$ by SDR.  So we can color $x_1y_1, x_3x_4$ with the same color, $\alpha$.  We can then color the remaining three uncolored edges by SDR.
\end{proof}

\begin{lemma}\label{distance>=4}
The distance between any two 2-vertices is at least four.
\end{lemma}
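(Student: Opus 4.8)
The plan is to proceed exactly as in Lemma~\ref{distance>=3} and Lemma~\ref{No2on5cycle}: assume for contradiction that two $2$-vertices $u$ and $v$ are at distance exactly three, reduce to a strong edge-coloring of a smaller graph $G'$ by the minimality of $G$, and then show the uncolored edges can always be recolored. By Lemma~\ref{distance>=3} the distance is already at least three, so the only case to rule out is distance exactly three. Let $P = u\,x\,y\,v$ be a shortest path, where $u,v$ are $2$-vertices and $x,y$ are necessarily $3$-vertices (a $2$-vertex interior to $P$ would put two $2$-vertices at distance at most two). Write the second neighbors of $u$ and $v$ as $u'$ and $v'$, and let $x'$ (resp. $y'$) be the third neighbor of $x$ (resp. $y$) off the path. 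By Lemmas~\ref{NoTriangle}, \ref{No4cycle} and \ref{No2on5cycle} the structure around $P$ has no short cycles, so these outside neighbors are distinct and the relevant non-adjacencies hold, which I will need when I add an edge to form $G'$.

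First I would delete the path vertices $u,x,y,v$ (and possibly one of $u',v'$ if it coincides, handled separately) from $G$, add a suitable edge between two of the outside neighbors—most naturally $u'v'$ or $x'y'$, chosen so that $G'$ remains a simple subcubic planar multigraph with no new short cycle—and invoke minimality to get a good coloring of $G'$. Next I would pull this coloring back to $G$: ignore the added edge, transfer its color onto a matched pair of path-edges exactly as in the earlier lemmas (e.g. color two edges incident to the contracted endpoints with the color the added edge received), obtaining a good partial coloring $\phi$ in which only the edges along $P$ and the pendant edges $uu',vv'$ remain uncolored. Then I would compute lower bounds on the available-color sets $|A_\phi(e)|$ for each uncolored edge: because $u$ and $v$ have degree $2$, the edges $uu'$, $ux$, $yv$, $vv'$ see comparatively few colored edges and will have large availability, while the central edge $xy$ sees the most. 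Counting the seven-or-fewer edges each uncolored edge sees should give availability numbers large enough to finish.

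The endgame has two modes, just as before. In the generic case the availability sets are large or spread out enough that Hall's condition holds and I finish by SDR. The delicate case is when several availability sets collapse to the same small set; here I would argue as in Lemma~\ref{No4cycle} and Lemma~\ref{No2on5cycle} that forcing all the $A_\phi(\cdot)$ to coincide on a small palette imposes an impossible partition of the remaining colors among $\sU_\phi(u')$, $\sU_\phi(x')$, $\sU_\phi(y')$, $\sU_\phi(v')$, yielding a contradiction. I expect the main obstacle to be the bookkeeping in this collapsed case: one must carefully track which colors the $3$-vertices $x,y$ already see through $x'$ and $y'$, verify that the added edge $u'v'$ (or $x'y'$) did not secretly create adjacencies that invalidate the color transfer, and check every sub-case where the outside neighbors share colors. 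Controlling these color interactions, rather than the SDR step, is where the real work lies.
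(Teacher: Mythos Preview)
Your setup matches the paper's, but the paper's argument is simpler than what you outline: it does \emph{not} add an auxiliary edge. After deleting the four internal vertices (your $u,x,y,v$; the paper's $x_2,x_3,x_4,x_5$) and taking a good coloring of the smaller graph by minimality, one counts directly: each of the four ``outer'' uncolored edges $u'u,\,xx',\,yy',\,vv'$ sees at most six colored edges and so has $|A(\cdot)|\ge 3$, while each of the three path edges $ux,\,xy,\,yv$ sees at most four colored edges and has $|A(\cdot)|\ge 5$. A short greedy case split finishes: if some color of $A(ux)$ lies outside $A(yv)$ (or $|A(yv)|\ge 6$), color $ux$ first and then $u'u,\,xx',\,yy',\,xy,\,vv',\,yv$ in order; otherwise $A(ux)=A(yv)$ with $|A(yv)|=5$, and then either $A(u'u)\cap A(ux)=\emptyset$ (reverse the order, ending with $u'u$), or pick $\beta\in A(u'u)\cap A(ux)=A(u'u)\cap A(yv)$, color both $u'u$ and $yv$ with $\beta$, and finish greedily. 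No SDR or palette-partition contradiction is needed.

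Your edge-addition step is not only unnecessary here, it is where your plan runs into trouble. Adding $x'y'$ and transferring its color to the pair $xx',\,yy'$ fails outright: those two edges see each other through $xy$, so they cannot share a color. Adding $u'v'$ instead collides with the paper's own observation that $u'v'$ ($=x_1x_6$) may already be an edge of $G$---the one extra adjacency not ruled out by the short-cycle lemmas---and in that case $u'u$ and $vv'$ \emph{do} see each other via $u'v'$, so the color transfer to that pair is illegal too. The moral is that the two $2$-vertices at the ends already buy enough slack in the availability counts that the auxiliary-edge trick used in Lemmas~\ref{No2on5cycle}--\ref{No2on7face} is unneeded for this lemma; a plain deletion suffices.
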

\begin{proof}
By Lemma \ref{distance>=3}, we may consider a path $x_1x_2x_3x_4x_5x_6$ such that $x_2, x_5$ are 2-vertices.  By Lemma \ref{distance>=3}, all other $x_i$ are 3-vertices, and so, we let $y_3, y_4$ be the third neighbors of $x_3,x_4$, respectively.  By Lemmas \ref{NoTriangle}, \ref{No4cycle}, \ref{separating} and \ref{No2on5cycle}, $y_3, y_4$ are distinct, not on this path and the only possible adjacency between these eight vertices other than those on the path and $x_3y_3, x_4y_4$, is $x_1x_6$.  However, regardless of the existence of $x_1x_6$, the following argument holds.

By the minimality of $G$, $G - \{x_2,x_3,x_4,x_5\}$ has a good coloring such that $|A (x_1x_2)|$, $|A(x_3y_3)|$, $|A(x_4y_4)|$, $|A(x_5x_6)| \ge 3$ and $|A(x_2x_3)|, |A(x_3x_4)|, |A(x_4x_5)| \ge 5$ (when $x_1x_6 \in E(G)$, then we get $|A(x_1x_2)|, |A(x_5x_6)| \ge 4$).

If there exists $\alpha\in A(x_{2}x_{3})\setminus A(x_{4}x_{5})$ (or if $|A(x_4x_5)| \ge 6$), then we can color $x_{2}x_{3}$ with $\alpha$  (or color $x_2x_3$ first) and then color $x_{1}x_{2}$, $x_{3}y_{3}$, $x_{4}y_{4}$, $x_{3}x_{4}$, $x_{5}x_{6}$, $x_{4}x_{5}$ in this order to obtain a good coloring of $G$.   So, we may assume that 
$|A(x_{4}x_{5})|=5$ and $A(x_{2}x_{3})=A(x_{4}x_{5})$.

If $ A(x_{1}x_{2})\cap A(x_{2}x_{3}) = \emptyset$, then we can color
$x_{5}x_{6},x_{4}x_{5},x_{4}y_{4},x_{3}y_{3},x_{3}x_{4}$, $x_{2}x_{3},   x_{1}x_{2}$ in this order to obtain a good coloring of $G$.  Thus, it  remains to consider the case when  $A(x_{2}x_{3})=A(x_{4}x_{5})$ and there exists some $\beta \in A(x_{1}x_{2}) \cap A(x_2x_3)$.   In this case, we color $x_1x_2$ and $x_4x_5$ with $\beta$ and then color $x_{5}x_{6},x_{4}y_{4},x_{3}y_{3},x_{3}x_{4}$,   $x_{2}x_{3}$ in this order to obtain a good coloring of $G$.  
\end{proof}

\begin{lemma}\label{distance>=5}
If the boundary of a face in $G$ contains a pair of 2-vertices, then the distance on the boundary between them is at least five.
\end{lemma}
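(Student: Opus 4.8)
The plan is to rule out the one remaining possibility. By Lemma~\ref{distance>=4} the two $2$-vertices are at distance at least $4$ in $G$, and since any boundary arc is a path in $G$, the boundary distance is at least the graph distance, hence at least $4$; so it suffices to show the boundary distance cannot equal $4$. Suppose it does. As $G$ is $2$-connected (Lemma~\ref{cut-vertex}), the boundary of the face $F$ is a cycle, and I would write the short arc between the two $2$-vertices as $u=z_0,z_1,z_2,z_3,z_4=v$, with $u,v$ the $2$-vertices. Each of $z_1,z_2,z_3$ lies within distance $3$ of a $2$-vertex, so Lemma~\ref{distance>=4} forces them to be $3$-vertices; let $w_1,w_2,w_3$ be their third neighbors, and let $z_{-1},z_5$ be the second neighbors of $u,v$. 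Using Lemmas~\ref{NoTriangle}, \ref{separating}, \ref{No4cycle} and~\ref{No2on5cycle} one checks that $z_{-1},z_5,w_1,w_2,w_3$ are five distinct vertices off the arc, each incident to exactly one deleted vertex, and that $z_{-1}z_5\notin E(G)$ (otherwise $u$ and $v$ would be joined by a path of length $3$).

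The crucial point, and where the hypothesis that $u$ and $v$ share a face is used, is that after deleting the internal arc the vertices $z_{-1}$ and $z_5$ lie on a common face: the arc of $\partial F$ through $u,z_1,z_2,z_3,v$ collapses, exposing $z_{-1},z_5$ to the region that was $F$. I would therefore set $G'':=(G-\{u,z_1,z_2,z_3,v\})+z_{-1}z_5$, which is a loopless subcubic planar multigraph on fewer vertices (each of $z_{-1},z_5$ has lost one edge, so its degree stays at most $3$). By minimality $G''$ has a good coloring $\phi$. Transferring $\phi$ to $G$ and coloring both $uz_{-1}$ and $vz_5$ with $\phi(z_{-1}z_5)$ yields a good partial coloring: these two edges do not see each other, nor any other edge of that color, precisely because $z_{-1}z_5$ was a properly colored edge of $G''$.

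Seven edges then remain uncolored. Writing $b=uz_1,\,c=z_1z_2,\,e=z_2z_3,\,g=z_3v$ for the arc edges and $d=z_1w_1,\,f=z_2w_2,\,h=z_3w_3$ for the spokes, a direct count of colors seen gives $|A(b)|,|A(c)|,|A(e)|,|A(g)|\ge 4$, $|A(f)|\ge 3$, and $|A(d)|,|A(h)|\ge 2$. I would extend by a system of distinct representatives (Hall's theorem), which succeeds unless several of these sets collapse onto a common small palette. The governing local structure is that $b,c,e,f$ pairwise see one another (they cluster around $z_1$ and $z_2$), so they form a clique in the ``sees'' relation and need four distinct colors, while $d=z_1w_1$ sees all four and so needs a fifth. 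Hence the extension can fail only if $A(b),A(c),A(e),A(f),A(d)$ all lie in a common $4$-set $T$; but then the clique $\{b,c,e,f\}$ already exhausts $T$ and $d$ has no available color — and symmetrically for the cluster around $z_2,z_3$ with $h$.

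The main obstacle is exactly this collapse analysis: showing that whenever the SDR fails, the available sets are forced into the rigid pattern above, and that the pattern is self-contradictory. I expect this to require recording, for each of $z_{-1},z_5,w_1,w_2,w_3$, which colors its incident edges carry, and then using the distinctness of these five vertices together with the absence of short cycles (Lemmas~\ref{NoTriangle}--\ref{No2on5cycle}) to rule out the total coincidence of colors seen by the arc edges. Everything else — the availability counts and the final greedy ordering — is routine, if somewhat lengthy.
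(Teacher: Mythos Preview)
Your setup is sound but you have chosen a different auxiliary edge than the paper, and this choice makes the extension step genuinely harder---so much so that you do not complete it.

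The paper deletes the same five vertices $x_2,\dots,x_6$ (your $u,z_1,z_2,z_3,v$) but adds the edge $y_3y_5$ (your $w_1w_3$), not $x_1x_7$ (your $z_{-1}z_5$). It then transfers the colour of $y_3y_5$ to the two \emph{spoke} edges $x_3y_3$ and $x_5y_5$ (your $d$ and $h$). The point is that $d$ and $h$ are the most constrained edges in the sees-graph: each sees four of the other uncolored edges and has the smallest list. By pre-colouring them, the paper is left with seven edges whose two ``hard'' members $x_1x_2,x_6x_7$ each see only \emph{two} other uncolored edges; the whole extension then goes through in three short greedy cases. Your choice pre-colours the peripheral edges $uz_{-1},vz_5$ instead, leaving $d$ and $h$ uncolored with $|A|\ge 2$ while each still sees four uncolored neighbours. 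The resulting list-colouring instance is strictly harder.

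Your collapse analysis is where this bites. You assert that failure forces $A(b),A(c),A(e),A(f),A(d)$ into a common $4$-set $T$, and then write ``the clique $\{b,c,e,f\}$ already exhausts $T$ and $d$ has no available color''---but that is the failure, not a contradiction; you still owe an argument that this configuration cannot occur. Moreover your ``only if'' is not right: there is a symmetric obstruction on $\{c,e,f,g,h\}$, and further mixed obstructions since $\{b,c,d,e,f\}$ and $\{c,e,f,g,h\}$ are both $5$-cliques in the sees-graph sharing the triangle $\{c,e,f\}$. The one structural constraint you do gain from your auxiliary edge is $\Upsilon(z_{-1},u)\cap\Upsilon(z_5,v)=\emptyset$, and indeed this kills the fully degenerate case $A(b)=A(c)=A(e)=A(g)$ (tracing complements gives $\Upsilon(z_{-1},u)=\Upsilon(w_2,z_2)=\Upsilon(z_5,v)$, a contradiction). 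But the weaker failure $A(b)=A(c)=A(e)$ with $A(d),A(f)\subseteq A(b)$ only yields $\Upsilon(z_{-1},u)=\Upsilon(w_2,z_2)$ and $\Upsilon(w_1,z_1)=\Upsilon(w_3,z_3)$, which your single constraint does not immediately exclude. Closing all such cases is possible but is real work that you have not done; the paper avoids it entirely by its choice of auxiliary edge.
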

\begin{proof}
By Lemma \ref{distance>=4}, any face contradicting the statement has length at least eight and contain a path $x_1x_2x_3x_4x_5x_6x_7$ such that $x_2$ and $x_6$ are 2-vertices.  By Lemma \ref{distance>=4}, all other $x_i$ are 3-vertices, and so, for $j \in \{3,4,5\}$ we let $y_j$ be the neighbor of $x_j$ other than $x_{j-1}, x_{j+1}$.   By Lemmas \ref{NoTriangle}, \ref{separating} and  \ref{No4cycle}, we have that $y_3,y_4, y_5$ are distinct, pairwise nonadjacent and not on this path.  By the same Lemmas, the only possible adjacencies between these ten vertices other than those on the path and $x_3y_3, x_4y_4, x_5y_5$, are $x_1y_5, x_7y_3$.  However, both edges cannot exist simultaneously and their existence will not affect the following argument.

Let $G'$ be obtained from $G$ by removing $x_2,x_3,x_4,x_5,x_6$ and adding the edge $y_3y_5$.  Observe that $G'$ is a subcubic, planar multigraph, and so by the minimality of $G$, $G'$ has a good coloring.  Ignoring $y_3y_5$, we have a good partial coloring of $G$ that we can extend by coloring $x_3y_3$ and $x_5y_5$ with the color of $y_3y_5$.  We will refer to this coloring as $\phi$.  Note that $|A_\phi(x_1x_2)|$, $|A_\phi(x_4y_4)|$, $|A_\phi(x_6x_7)| \ge 2$ and $|A_\phi(x_ix_{i+1})| \ge 4$ for $i \in \{2,3,4,5\}$.  From here we see that the existence of $x_1y_5$ does not affect coloring $x_1x_2$  as $\phi(x_5y_5)$  is already excluded from $A_\phi(x_1x_2)$ since $x_1x_2$  sees $x_3y_3$.  Symmetrically, the existence of $x_7y_3$ does not affect coloring $x_6x_7$ as $\phi(x_3y_3)$ is already excluded from $A_\phi(x_6x_7)$ since $x_6x_7$ sees $x_5y_5$.

If there exists $\alpha \in A_\phi(x_4x_5)\setminus A_\phi(x_2x_3)$ (or if $|A_\phi(x_2x_3)| \ge 5$), then we can color $x_4x_5$ with $\alpha$ (or color $x_4x_5$ first) and then color $x_{6}x_{7}$, $x_{4}y_{4}$, $x_{5}x_{6}$, $x_{3}x_{4}$, $x_{1}x_{2}$, $x_{2}x_{3}$ in this order to obtain a good coloring of $G$.  So, we may assume that  $|A_\phi(x_{2}x_{3})|=4$ and $A_\phi(x_{2}x_{3})=A_\phi(x_{4}x_{5})$.

If $A_\phi(x_1x_2) \cap A_\phi(x_4x_5) = \emptyset$ (and consequently, $A_\phi(x_1x_2) \cap A_\phi(x_2x_3) = \emptyset$), then we can color $x_{6}x_{7},x_{4}y_{4},x_{5}x_{6},x_{4}x_{5},x_{3}x_{4},   x_{2}x_{3},x_{1}x_{2}$ in this order to obtain a good coloring of $G$.  Thus, it  remains to consider the case when there exists some $\beta \in A(x_{1}x_{2})\cap A(x_{4}x_{5})$.  In this case we color $x_1x_2, x_4x_5$ with $\beta$ and then color $x_{6}x_{7}$, $x_{4}y_{4}$, $x_{5}x_{6}$, $x_{3}x_{4}$, $x_{2}x_{3}$ in this order to obtain a good coloring of $G$. 
\end{proof}

\begin{lemma}\label{No2on6cycle}
Every vertex of a 6-cycle in $G$ is a 3-vertex. 
\end{lemma}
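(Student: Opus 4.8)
The plan is to follow the same discharging-free reducibility template used throughout this section, now applied to a 6-cycle with a 2-vertex. Since by Lemma~\ref{separating} every separating cycle of length four or five is excluded and we are working inside a minimal counterexample, I would first argue that it suffices to treat a 6-face carrying a 2-vertex (the 6-cycle analogue of how Lemma~\ref{No2on5cycle} reduced to 5-faces). Suppose for contradiction that $x_1x_2x_3x_4x_5x_6$ is a 6-cycle with $x_1$ a 2-vertex. By Lemma~\ref{distance>=4}, the distance between any two 2-vertices is at least four, so $x_1$ is the only 2-vertex on this cycle and each of $x_2,\dots,x_6$ is a 3-vertex with a third neighbor $y_i$ off the cycle. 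By Lemmas~\ref{NoTriangle}, \ref{No4cycle}, \ref{separating}, and \ref{No2on5cycle}, these $y_i$ should be distinct, not on the cycle, and pairwise nonadjacent except for a short list of ``long-chord'' pairs (e.g.\ $y_2y_5$, $y_2y_6$, $y_3y_6$) that planarity and the girth lemmas permit; I would record exactly which adjacencies survive and check that they do not disrupt the count below.

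**The reduction and the color-availability bounds.**

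Following the established pattern, I would delete the interior vertices of the cycle and add one edge joining two of the off-cycle neighbors to preserve planarity and subcubicity. Concretely, let $G'$ be obtained from $G$ by removing $x_1,\dots,x_6$ and inserting an edge between a suitable pair among the $y_i$ (for instance $y_3y_5$, chosen so the added edge forces the right color-coincidences). By minimality $G'$ has a good coloring; ignoring the new edge gives a good partial coloring $\phi$ of $G$, which I extend by coloring two of the spoke edges $x_jy_j$ with the color of the deleted edge. I would then compute $|A_\phi(e)|$ for each of the remaining uncolored edges (the six cycle edges and the remaining spokes). Each spoke $x_jy_j$ sees a bounded number of colored edges and should satisfy $|A_\phi(\cdot)|\ge 2$; the cycle edges incident to the 2-vertex $x_1$ see fewer constraints and should give larger availability (on the order of $\ge 4$ or $\ge 5$), exactly as in Lemmas~\ref{No2on5cycle}, \ref{distance>=4}, and \ref{distance>=5}.

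**Finishing by SDR or a short case analysis.**

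With these lower bounds in hand, the generic outcome is that the total number of available colors across the uncolored edges is large enough to apply Hall's theorem and finish by SDR. The cases that resist SDR are precisely those where several availability sets collapse onto a common small set; as in the earlier proofs I would handle these by first committing a repeated color to two non-adjacent cycle edges (the two edges at the 2-vertex are the natural candidates, since they enjoy the largest availability), thereby relieving the bottleneck, and then coloring the remaining edges greedily in a carefully chosen order. The main obstacle I anticipate is bookkeeping: a 6-cycle has more uncolored edges and more potential long-chords $y_iy_j$ than the 5-face of Lemma~\ref{No2on5cycle}, so verifying that the surviving adjacencies never simultaneously reduce the availability counts below what SDR or the greedy ordering requires will be the delicate part. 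I would isolate this by arguing that any single surviving long-chord only removes one color from one availability set, which the slack in the $\ge 4$/$\ge 5$ bounds on the edges near $x_1$ absorbs, leaving the extension to $G$ and hence the desired contradiction.
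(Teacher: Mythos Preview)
Your plan has a real gap at the very first step. You appeal to Lemma~\ref{separating} to reduce to the case of a 6-face, but that lemma only rules out separating 4- and 5-cycles, not separating 6-cycles. The paper treats the separating 6-cycle case separately (its Case~1), using Lemmas~\ref{cut-vertex} and \ref{2-edge-cut} to force a 2-vertex $u$ inside $C$ whose presence creates a short cycle with a 2-vertex, contradicting earlier lemmas. This is not just a loose end: in the face case the paper \emph{uses} Case~1 to show that every pair $y_i,y_{i+3}$ is nonadjacent (since $y_iy_{i+3}$ would close a separating 6-cycle). Without that, your list of surviving ``long-chord'' adjacencies is incomplete, and the availability bookkeeping you sketch cannot be checked. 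In fact the paper establishes that \emph{all} $y_i$ are pairwise nonadjacent, not merely up to a short list.

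Your reduction is also weaker than the paper's. You propose to add a single edge $y_3y_5$ and then color two spokes with that color, mirroring the pattern of Lemma~\ref{distance>=5}. The paper instead inserts a \emph{new vertex} $z$ inside the face with edges $zy_1,zy_3,zy_4$; the point is that the color $\phi(zy_1)$ is then guaranteed to avoid $\Upsilon(y_3,x_3)\cup\Upsilon(y_4,x_4)$, so it can be placed simultaneously on the spoke $x_1y_1$ and on the \emph{cycle edge} $x_3x_4$. This extra coincidence is what drives the subsequent case analysis (forcing $|A_\sigma(x_2x_3)|=|A_\sigma(x_4x_5)|=4$, then $A_\sigma(x_2y_2)=A_\sigma(x_3y_3)$ disjoint from $A_\sigma(x_4y_4)=A_\sigma(x_5y_5)$, and finally an explicit coloring). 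Your single-edge reduction gives only two spokes sharing a color and hence less structural leverage; the vague appeal to ``SDR or a short case analysis'' does not make clear how you would close out the collapsed cases, which here are more numerous than in the 5-face lemma.
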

\begin{proof}
Suppose that $G$ has a $6$-cycle $C$ given by $x_0x_1x_2x_3x_4x_5$ on which $x_0$ is a 2-vertex.  By Lemma \ref{distance>=4}, $x_0$ is the only 2-vertex of $C$. 

\begin{case}
$C$ is a separating 6-cycle.
\end{case}

By Lemmas \ref{NoTriangle}, \ref{separating} and \ref{No4cycle}, $C$ has no chords.  Just as in the proof of Lemma \ref{separating}, we may assume that $C$ has at most two edges inside $C$ that are incident to vertices on $C$.  If there is exactly one such edge, then $G$ has a cut-edge, contradicting Lemma \ref{cut-vertex}.  So, we  have two such edges, and by Lemma \ref{2-edge-cut} these edges  share a common endpoint, say $u$, inside of $C$.  Now, $u$ is a 2-vertex, else it is a cut-vertex with a cut-edge.  However, $u$ together with the vertices of $C$ contains either a triangle, a 4-cycle, or a 5-cycle containing a 2-vertex, contradicting Lemmas \ref{NoTriangle}, \ref{No4cycle}, \ref{separating}, or \ref{No2on5cycle}, respectively.  

\begin{case}
$C$ is not a separating 6-cycle.
\end{case}

Recall that $G$ is assumed to be embedded into the plane.  Thus $C$ must be the boundary of a 6-face.  As mentioned above, each $x_i$, other than $x_0$, is a 3-vertex and so has a third neighbor $y_i$.  We claim that these $y_i$'s are distinct, pairwise disjoint and not on $C$.  Indeed, if any $y_i$ was on $C$, we would create either a triangle or 4-cycle, contradicting Lemmas \ref{NoTriangle} and \ref{No4cycle}.  For $i \in [4]$, if $y_i = y_{i+1}$, we have a triangle contradicting Lemma \ref{NoTriangle}.  For $i \in \{1,2,3,5\}$ taken modulo 5, if $y_i = y_{i+2}$, we have a 4-cycle contradicting Lemma \ref{No4cycle}.  For $i \in \{1,2\}$, if $y_i = y_{i+3}$, then $y_ix_ix_{i+1}x_{i+2}x_{i+3}y_{i+3}$ is a separating 5-cycle contradicting Lemma \ref{separating}.  Thus, the $y_i$'s are distinct.  For $i \in [4]$, if $y_iy_{i+1} \in E(G)$, we have a 4-cycle contradicting Lemma \ref{No4cycle}.  For $i \in [3]$ if $y_iy_{i+2} \in E(G)$, we have a separating 5-cycle contradicting Lemma \ref{separating}.  If $y_5y_1 \in E(G)$, then $y_1x_1x_0x_5y_5y_1$ is a 5-cycle containing a 2-vertex contradicting Lemma \ref{No2on5cycle}.  For $i \in \{1,2\}$ if $y_iy_{i+3} \in E(G)$, then $y_ix_ix_{i+1}x_{i+2}x_{i+3}y_{i+3}y_i$ is a separating 6-cycle contradicting Case 1.  Thus, the $y_i$'s are pairwise disjoint.

Now, let $G'$ denote the plane graph obtained from $G$ by adding a new vertex $z$ inside the face bounded by $C$, deleting $x_0, \dots, x_5$, and adding the new edges $zy_1, zy_3, zy_4$.  Observe that $G'$ is a subcubic, planar graph, and so by the minimality of $G$, it has a good coloring $\phi$.  Ignoring $zy_1, zy_3, zy_4$, this yields a good partial coloring of $G$ that can be extended by coloring $x_1y_1$ and $x_3x_4$  with $\phi(zy_1)$.  This coloring, call it $\sigma$, is indeed a good partial coloring as $\phi(zy_1)$ cannot appear in $\Upsilon_\phi(y_3,x_3) \cup \Upsilon_\phi(y_4,x_4)$ since $\phi$ was a partial good coloring.

Without loss of generality, suppose $\sigma(x_1y_1) = \sigma(x_3x_4) = 1$.  Note that $|A_\sigma(x_iy_i)| \ge 2$ for $i \in \{2,3,4,5\}$, $|A_\sigma(x_jx_{j+1})| \ge 4$ for $j \in \{1,2,4\}$ and $|A_\sigma(x_\ell x_{\ell+1})| \ge 6$ for $\ell \in \{0,5\}$ taken modulo 6.  As a result, if we can extend $\sigma$ to a good partial coloring on the edges $x_2y_2, x_3y_3, x_4y_4, x_5y_5, x_2x_3, x_4x_5$, then we can extend this further by coloring $x_1x_2, x_0x_1, x_0x_5$ in this order to obtain a good coloring of $G$.  Thus, it suffices to consider the edges $x_2y_2, x_3y_3, x_4y_4, x_5y_5, x_2x_3, x_4x_5$.

For $i \in \{2,3,4,5\}$, if there exists $\alpha \in A_\sigma(x_iy_i) \setminus A_\sigma(x_2x_3)$ (or $|A_\sigma(x_2x_3)| \ge 5$), then we can color $x_iy_i$ with $\alpha$ (or color $x_iy_i$ first).  If $i = 2$, we color $x_3y_3, x_4y_4, x_5y_5, x_4x_4, x_2x_3$ in this order.  If $i = 5$, we color $x_4y_4, x_3y_3, x_2y_2, x_4x_5, x_2x_3$ in this order.  If $i \in \{2,3\}$, we color $x_{i-1}y_{i-1},\dots, x_2y_2, x_{i+1}y_{i+1},\dots, x_5y_5$, $x_4x_5, x_2x_3$ in this order.  In all cases, we obtain our good partial coloring of $G$.  As a consequence, $|A_\sigma(x_2x_3)|$ $= 4$ and $A_\sigma(x_iy_i) \subseteq A_\sigma(x_2x_3)$ for $i \in \{2,3,4,5\}$.  By a symmetric argument, $|A_\sigma(x_4x_5)|$ $= 4$ and $A_\sigma(x_iy_i) \subseteq A_\sigma(x_4x_5)$.

Now, if there exists $\beta \in A_\sigma(x_3y_3) \setminus A_\sigma(x_2y_2)$ (or $|A_\sigma(x_2y_2)| \ge 3$), then we can color $x_3y_3$ with $\beta$ (or color $x_3y_3$ first) and then color $x_{4}y_{4}$, $x_{5}y_{5}$, $x_{4}x_{5}$, $x_2x_3$, $x_2y_2$ in this order to obtain our good partial coloring of $G$.  So we may assume that $|A_\sigma(x_2y_2)|=2$ and $A_\sigma(x_2y_2) = A_\sigma(x_3y_3)$. A similar argument shows that $|A_\sigma(x_{5}y_{5})|=2$ and $A_\sigma(x_{5}y_{5})= A_\sigma(x_{4}y_{4})$. 

Lastly, if there exists $\gamma  \in A_\sigma(x_2y_2) \cap A_\sigma(x_4y_4)$, then we can color $x_2y_2, x_4y_4$ with $\gamma$ and then color $x_3y_3$, $x_{5}y_{5}$, $x_{4}x_{5}$, $x_2x_3$ in this order to obtain our good partial coloring of $G$.  

Thus, $A_\sigma(x_2y_2) = A_\sigma(x_3y_3)$ and $A_\sigma(x_4y_4) = A_\sigma(x_5y_5)$.  Furthermore, $A_\sigma(x_2y_2)$ and $A_\sigma(x_4y_4)$ partition $A_\sigma(x_2x_3)$ and $A_\sigma(x_4x_5)$ so that $A_\sigma(x_2x_3) = A_\sigma(x_4x_5)$.  So without loss of generality, we may assume that $A_\sigma(x_2y_2)=  A_\sigma(x_3y_3)=\{2,3\}$, $A_\sigma(x_{4}y_{4})=  A_\sigma(x_{5}y_{5})=\{4,5\}$ and $A_\sigma(x_2x_3)=  A_\sigma(x_{4}x_{5})=\{2,3,4,5\}$.  We can then obtain a good partial coloring of $G$ by coloring $x_iy_i$ with $i$ for $i \in \{2,3,4,5\}$, $x_2x_3$ with 5 and $x_4x_5$ with 2.  As mentioned above, these good partial colorings can each be extended to obtain good colorings of $G$.

This completes the case that $C$ is the boundary of a 6-face, and so proves the lemma.
\end{proof}

\begin{figure}[h]
\centering
\begin{tikzpicture}[line cap=round,line join=round,>=triangle 45,x=0.75cm,y=0.75cm]
\clip(-9, -4) rectangle (9, 3);
\draw (-5.0,2.0)-- (-6.5630409448376374,1.2477591934187724);
\draw (-6.5630409448376374,1.2477591934187724)-- (-6.950298708611126,-0.44309699523893453);
\draw (-6.5630409448376374,1.2477591934187724)-- (-8.126081889675275,2.495518386837545);
\draw (-6.950298708611126,-0.44309699523893453)-- (-5.870462198744655,-1.8006375428043868);
\draw (-6.950298708611126,-0.44309699523893453)-- (-8.900597417222253,-0.8861939904778691);
\draw (-5.0,2.0)-- (-3.4369590551623626,1.2477591934187724);
\draw (-3.4369590551623626,1.2477591934187724)-- (-3.0497012913888737,-0.44309699523893453);
\draw (-3.4369590551623626,1.2477591934187724)-- (-1.873918110324725,2.495518386837545);
\draw (-3.0497012913888737,-0.44309699523893453)-- (-4.129537801255345,-1.8006375428043868);
\draw (-3.0497012913888737,-0.44309699523893453)-- (-1.0994025827777474,-0.8861939904778691);
\draw (-5.870462198744655,-1.8006375428043868)-- (-4.129537801255345,-1.8006375428043868);
\draw (-5.870462198744655,-1.8006375428043868)-- (-6.74092439748931,-3.6012750856087736);
\draw (-4.129537801255345,-1.8006375428043868)-- (-3.2590756025106904,-3.6012750856087736);
\draw (-5.4,1.9) node[anchor=north west] {$x_0$};
\draw (-4.2,1.4) node[anchor=north west] {$x_1$};
\draw (-3.9,0) node[anchor=north west] {$x_2$};
\draw (-4.7,-1.1) node[anchor=north west] {$x_3$};
\draw (-6,-1.1) node[anchor=north west] {$x_4$};
\draw (-6.9,0) node[anchor=north west] {$x_5$};
\draw (-6.6,1.4) node[anchor=north west] {$x_6$};
\draw (-2.7,3.1) node[anchor=north west] {$y_1$};
\draw (-1.9,0) node[anchor=north west] {$y_2$};
\draw (-3.4,-2.8) node[anchor=north west] {$y_3$};
\draw (-7.4,-2.8) node[anchor=north west] {$y_4$};
\draw (-8.9, 0) node[anchor=north west] {$y_5$};
\draw (-8,3.1) node[anchor=north west] {$y_6$};
\draw (7.3,3.1) node[anchor=north west] {$y_1$};
\draw (8.1,0) node[anchor=north west] {$y_2$};
\draw (6.6,-2.8) node[anchor=north west] {$y_3$};
\draw (2.6,-2.8) node[anchor=north west] {$y_4$};
\draw (1.1,0) node[anchor=north west] {$y_5$};
\draw (2.,3.1) node[anchor=north west] {$y_6$};
\draw (1.87, 2.49)--(8.12,2.49);
\draw (8.9, -0.88)--(3.25, -3.6);
\draw [->] (-1.0,0.0) -- (1.0,0.0);
\begin{scriptsize}
\draw [fill=black] (-5.0,2.0) circle (1.5pt);
\draw [fill=black] (-6.5630409448376374,1.2477591934187724) circle (1.5pt);
\draw [fill=black] (-6.950298708611126,-0.44309699523893453) circle (1.5pt);
\draw [fill=black] (-5.870462198744655,-1.8006375428043868) circle (1.5pt);
\draw [fill=black] (-6.5630409448376374,1.2477591934187724) circle (1.5pt);
\draw [fill=black] (-6.950298708611126,-0.44309699523893453) circle (1.5pt);
\draw [fill=black] (-6.5630409448376374,1.2477591934187724) circle (1.5pt);
\draw [fill=black] (-8.126081889675275,2.495518386837545) circle (1.5pt);
\draw [fill=black] (-6.950298708611126,-0.44309699523893453) circle (1.5pt);
\draw [fill=black] (-5.870462198744655,-1.8006375428043868) circle (1.5pt);
\draw [fill=black] (-6.950298708611126,-0.44309699523893453) circle (1.5pt);
\draw [fill=black] (-8.900597417222253,-0.8861939904778691) circle (1.5pt);
\draw [fill=black] (-6.950298708611126,-0.44309699523893453) circle (1.5pt);
\draw [fill=black] (-5.870462198744655,-1.8006375428043868) circle (1.5pt);
\draw [fill=black] (-5.0,2.0) circle (1.5pt);
\draw [fill=black] (-3.4369590551623626,1.2477591934187724) circle (1.5pt);
\draw [fill=black] (-5.0,2.0) circle (1.5pt);
\draw [fill=black] (-3.4369590551623626,1.2477591934187724) circle (1.5pt);
\draw [fill=black] (-3.0497012913888737,-0.44309699523893453) circle (1.5pt);
\draw [fill=black] (-3.4369590551623626,1.2477591934187724) circle (1.5pt);
\draw [fill=black] (-1.873918110324725,2.495518386837545) circle (1.5pt);
\draw [fill=black] (-3.0497012913888737,-0.44309699523893453) circle (1.5pt);
\draw [fill=black] (-4.129537801255345,-1.8006375428043868) circle (1.5pt);
\draw [fill=black] (-3.0497012913888737,-0.44309699523893453) circle (1.5pt);
\draw [fill=black] (-1.0994025827777474,-0.8861939904778691) circle (1.5pt);
\draw [fill=black] (-3.0497012913888737,-0.44309699523893453) circle (1.5pt);
\draw [fill=black] (-4.129537801255345,-1.8006375428043868) circle (1.5pt);
\draw [fill=black] (-3.0497012913888737,-0.44309699523893453) circle (1.5pt);
\draw [fill=black] (-4.129537801255345,-1.8006375428043868) circle (1.5pt);
\draw [fill=black] (-5.870462198744655,-1.8006375428043868) circle (1.5pt);
\draw [fill=black] (-6.74092439748931,-3.6012750856087736) circle (1.5pt);
\draw [fill=black] (-5.870462198744655,-1.8006375428043868) circle (1.5pt);
\draw [fill=black] (-5.870462198744655,-1.8006375428043868) circle (1.5pt);
\draw [fill=black] (-5.870462198744655,-1.8006375428043868) circle (1.5pt);
\draw [fill=black] (-5.870462198744655,-1.8006375428043868) circle (1.5pt);
\draw [fill=black] (-4.129537801255345,-1.8006375428043868) circle (1.5pt);
\draw [fill=black] (-3.2590756025106904,-3.6012750856087736) circle (1.5pt);
\draw [fill=black] (-4.129537801255345,-1.8006375428043868) circle (1.5pt);
\draw [fill=black] (-4.129537801255345,-1.8006375428043868) circle (1.5pt);
\draw [fill=black] (-4.129537801255345,-1.8006375428043868) circle (1.5pt);
\draw [fill=black] (-4.129537801255345,-1.8006375428043868) circle (1.5pt);
\draw [fill=black] (1.873918110324725,2.495518386837545) circle (1.5pt);
\draw [fill=black] (8.126081889675275,2.495518386837545) circle (1.5pt);
\draw [fill=black] (8.900597417222253,-0.8861939904778691) circle (1.5pt);
\draw [fill=black] (1.0994025827777474,-0.8861939904778691) circle (1.5pt);
\draw [fill=black] (3.2590756025106904,-3.6012750856087736) circle (1.5pt);
\draw [fill=black] (6.74092439748931,-3.6012750856087736) circle (1.5pt);
\end{scriptsize}
\end{tikzpicture}
\caption{Forming $G'$ from $G$}
\label{fig:No2on7face}
\end{figure}
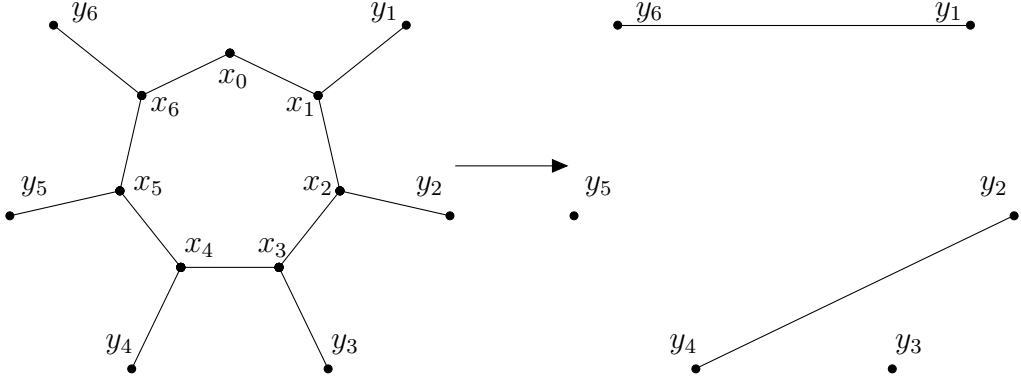

\begin{lemma}\label{No2on7face}
Every vertex of a 7-face in $G$ is a 3-vertex.
\end{lemma}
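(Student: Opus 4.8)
The plan is to argue by contradiction inside the minimal counterexample $G$, following the template of Lemmas \ref{No2on5cycle} and \ref{No2on6cycle}. Suppose some $7$-face $F$ has a $2$-vertex on its boundary. Since $G$ is $2$-connected (Lemma \ref{cut-vertex}), $F$ is bounded by a $7$-cycle $x_0x_1\cdots x_6$, and since any two boundary vertices of a $7$-face are within distance $\lfloor 7/2\rfloor = 3 < 5$, Lemma \ref{distance>=5} forces this $2$-vertex to be unique; I label it $x_0$. Every other $x_i$ is then a $3$-vertex with a third neighbor $y_i\notin F$. The first task is to record the local structure: using Lemmas \ref{NoTriangle}, \ref{No4cycle}, \ref{separating}, \ref{No2on5cycle} and \ref{No2on6cycle}, I would show the $y_i$ are distinct and off the face, that $y_iy_{i+1}\notin E(G)$ (else a $4$-cycle) and that $y_iy_{i+2}\notin E(G)$ (else a $5$-cycle using the corner of $F$ at $x_{i+1}$, hence separating). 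In particular $y_2y_4\notin E(G)$ by Lemma \ref{separating}, and $y_1y_6\notin E(G)$ since otherwise $y_1x_1x_0x_6y_6$ is a $5$-cycle through the $2$-vertex $x_0$, contradicting Lemma \ref{No2on5cycle}. The only adjacencies among the $y_i$ left open are those of the form $y_iy_{i+3}$.

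Next I would form the reduced graph $G'$ as in Figure \ref{fig:No2on7face}: delete $x_0,\dots,x_6$ and, inside the region they vacate, add the two edges $y_1y_6$ and $y_2y_4$; this keeps $G'$ subcubic and plane, and by minimality $G'$ has a good colouring $\phi$. I would transfer $\phi$ to the surviving edges of $G$ and then use the two colours $\phi(y_1y_6)$ and $\phi(y_2y_4)$ to pre-colour the spokes they represent, namely $x_1y_1,x_6y_6$ and $x_2y_2,x_4y_4$, obtaining a partial colouring $\sigma$. The two spokes inside each pair do not see one another (any edge joining their ends would create a triangle, a $4$-cycle, or the forbidden $5$-cycle through $x_0$), and $\phi(y_1y_6),\phi(y_2y_4)$ avoid the remaining edges at $y_1,y_6,y_2,y_4$ by properness of $\phi$.

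With $\sigma$ in hand the only uncolored edges are the seven edges of $F$ together with the spokes $x_3y_3$ and $x_5y_5$. I would then bound the available lists exactly as in Lemma \ref{No2on6cycle}: because $x_0$ has degree $2$, the edges $x_0x_1,x_0x_6$ (and then $x_1x_2,x_5x_6$) see comparatively few coloured edges and retain enough colours to be coloured last, so the problem collapses to the core edges $x_2x_3,x_3x_4,x_4x_5,x_3y_3,x_5y_5$. On this core I would first attempt to finish by Hall's theorem (SDR), which succeeds unless the available lists are forced into near-equality.

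The step I expect to be the main obstacle is twofold. First, one must check that $\sigma$ really is good: the one genuinely delicate adjacency is that $x_1y_1$ sees $x_2y_2$ through the face edge $x_1x_2$, so the two transferred colours must differ; this has to be secured either from the open edge $y_1y_4$, which makes $y_1y_6$ and $y_2y_4$ see each other in $G'$ and hence forces $\phi(y_1y_6)\ne\phi(y_2y_4)$, or otherwise by an appropriate choice of $\phi$, in the same spirit in which the stray edges $x_1y_5,x_7y_3$ were dispatched in Lemma \ref{distance>=5}. Second, the genuinely hard case is the degenerate one in which the core lists coincide or partition one another; there I would derive a contradiction by a counting argument on how the colour sets $\sU_\sigma(y_i)$ are forced to partition a small fixed palette, precisely the kind of contradiction reached at the ends of Lemmas \ref{No4cycle} and \ref{No2on6cycle}. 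The careful bookkeeping of which colours each spoke and core edge may take, tracked through $\phi(y_1y_6)$, $\phi(y_2y_4)$ and the possible edges $y_iy_{i+3}$, is where the real work lies.
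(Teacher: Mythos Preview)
Your setup (the auxiliary graph $G'=G-\{x_0,\dots,x_6\}+y_1y_6+y_2y_4$ and the transfer of the two colours onto the spoke pairs) is exactly the paper's, but the point you flag as ``the main obstacle'' really is unresolved in your plan. You need $\phi(y_1y_6)\neq\phi(y_2y_4)$ because $x_1y_1$ sees $x_2y_2$, and your proposed fix via the edge $y_1y_4$ only works when that edge happens to exist; it need not. There is no ``appropriate choice of $\phi$'' either: minimality only hands you \emph{some} good colouring of $G'$, and in $G'$ the edges $y_1y_6$ and $y_2y_4$ are in general at distance $>2$, so you cannot force them apart. The paper resolves this by proving an independent claim first: $A_\phi(x_2y_2)\cap A_\phi(x_4y_4)\cap A_\phi(x_6y_6)=\emptyset$. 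This is shown by contradiction---if a common colour existed one could put it on all three of $x_2y_2,x_4y_4,x_6y_6$ and then extend greedily/by SDR to a good colouring of $G$. Once the claim holds, distinctness is automatic: $\phi(y_2y_4)\in A_\phi(x_2y_2)\cap A_\phi(x_4y_4)$ while $\phi(y_1y_6)\in A_\phi(x_6y_6)$, so they cannot coincide. This claim is the missing idea in your outline.

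Second, your endgame is too optimistic. After the four spokes are fixed and $x_5y_5$ is coloured, the remaining extension does not succumb to a single SDR/partition argument. The paper needs a further structural claim---that the lists $A_\psi(x_4x_5)$ and $A_\psi(x_5x_6)$ coincide and have size exactly $2$---whose proof is a page-long case analysis that repeatedly exploits the built-in disjointness $\Upsilon_\psi(y_2,x_2)\cap\Upsilon_\psi(y_4,x_4)=\emptyset$ and $\Upsilon_\psi(y_1,x_1)\cap\Upsilon_\psi(y_6,x_6)=\emptyset$ inherited from the auxiliary edges of $G'$; only after that does a final explicit colouring finish it. Your sketch does anticipate that these disjointness relations matter, but the actual bookkeeping is substantially longer than the analogous step in Lemma~\ref{No2on6cycle}.
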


\begin{proof}
Recall that $G$ is assumed to be embedded into the plane.  Suppose on the contrary that $G$ has a 7-face with boundary $x_0x_1x_2\dots x_6$ with $x_0$ being a 2-vertex.  By Lemma \ref{distance>=5}, each $x_i$ other than $x_0$ has a third neighbor $y_i \notin \{x_{i-1},x_{i+1}\}$ where $i$ is taken modulo 7.  Similarly to  Case 2 of Lemma \ref{No2on6cycle},  Lemmas \ref{NoTriangle}, \ref{No4cycle}, \ref{separating}, \ref{No2on5cycle} and \ref{No2on6cycle}, imply that the $y_i$'s are not on the 7-face, are distinct and the only possible adjacencies other than those on this face or $x_iy_i$, $i \in [6]$, are $y_1y_4, y_2y_5, y_3y_6$.  Note by Lemma \ref{No2on6cycle},  $y_2y_6, y_1y_5 \notin E(G)$.

Let $G'$ be obtained from $G$ by removing $x_0, x_1, \dots, x_6$ and adding the edges $y_1y_6, y_2y_4$ (see Figure \ref{fig:No2on7face}).  Observe that $G'$ is a subcubic, planar multigraph, and so by the minimality of $G$, $G'$ has a good coloring, which ignoring $y_1y_6, y_2y_4$, is a good partial coloring $\phi$ of $G$.

\begin{claim}\label{cl1}
$A_\phi(x_2y_2)\cap A_\phi(x_{4}y_{4})\cap A_\phi(x_6y_6)=\emptyset$.
\end{claim}

\begin{proof}
Without loss of generality, suppose on the contrary that $1 \in A_\phi(x_2y_2) \cap A_\phi(x_4y_4) \cap A_\phi(x_6y_6)$.  We can obtain another good partial coloring of $G$, $\sigma$, by coloring $x_2y_2, x_4y_4, x_6y_6$ with 1.  Recall that $y_iy_{i+3}$, $i \in [3]$ are possible edges of $G$.  However, the existence of these edges will not affect the following argument as we will be sure to not color $x_1y_1, x_3y_3, x_5y_5$ with 1.

Note that $|A_\sigma(x_iy_i)| \ge 2$ for  $i \in \{1,3,5\}$, $|A_\sigma(x_jx_{j+1})| \ge 4$ for $j \in [5]$ and $|A_\sigma(x_6x_0)|$, $|A_\sigma(x_0x_1)| \ge 6$.  As a result, if we can somehow extend $\sigma$ to a good partial coloring on the edges $x_{1}y_{1}$, $x_{3}y_{3}$, $x_5y_{5}$, $x_1x_2$, $x_{2}x_{3}$, $x_{3}x_{4}$, $x_{4}x_{5}$, then we can extend this further by coloring $x_{5}x_{6}$, $x_{6}x_{0}$, $x_{0}x_{1}$ in this order.  Thus, it suffices to consider the edges $x_{1}y_{1}, x_{3}y_{3}, x_5y_{5},x_1x_2$, $x_{2}x_{3}$, $x_{3}x_{4}$, $x_{4}x_{5}$.

Now, if there exists $\alpha \in A_\sigma(x_2x_3)\setminus A_\sigma(x_4x_5)$ (or $|A_\sigma(x_4x_5)| \ge 5$), we can color $x_2x_3$ with $\alpha$ (or just color $x_2x_3$ first) and then color $x_1y_1, x_3y_3, x_1x_2, x_3x_4, x_5y_5, x_4x_5$ in this order to obtain our good partial coloring of $G$.  So, we may assume that $|A_\sigma(x_4x_5)| = 4$ and $A_\sigma(x_4x_5) = A_\sigma(x_2x_3)$.

If $A_\sigma(x_5y_5) \cap A_\sigma(x_2x_3) = \emptyset$ (and consequently, $A_\sigma(x_5y_5) \cap A_\sigma(x_4x_5) = \emptyset$), then we can color $x_1y_1,x_{3}y_{3},x_{1}x_2,x_{2}x_{3},x_3x_{4},   x_4x_5,x_5y_5$ in this order to obtain our good partial coloring of $G$.  Thus, it remains to consider the case when there exists some $\beta \in A_\sigma(x_5y_5) \cap A_\sigma(x_2x_3)$.  In this case, we color $x_5y_5, x_2x_3$ with $\beta$ and then color $x_1y_1, x_3y_3, x_1x_2, x_3x_4, x_4x_5$ in this order to obtain a good coloring of $G$.  This proves the claim.
\end{proof}

Recall that we originally constructed the auxiliary graph $G - \{x_0,\dots, x_6\} + y_1y_6 + y_2y_4$ to obtain $\phi$.  By Claim \ref{cl1}, the colors placed on $y_1y_6, y_2y_4$ are distinct, as they are colors in $A_\phi(x_6y_6)$ and $A_\phi(x_2y_2) \cap  A_\phi(x_4y_4)$, respectively.  So we may assume that $y_1y_6$ and $y_2y_4$ received the colors 1 and 2, respectively.  

Coloring $x_1y_1, x_6y_6$ with 1 and $x_2y_2, x_4y_4$ with 2, extends $\phi$ to a good partial coloring of $G$.  Additionally, under this new partial coloring, $x_5y_5$ sees at most eight colored edges, including edges colored 1 and 2, so that we can extend further by coloring $x_5y_5$ with some $\alpha$.  We will refer to this new good partial coloring in which $x_1y_1, x_6y_6$ are colored 1, $x_2y_2, x_4y_4$ are colored 2 and  $x_5y_5$ is colored $\alpha$, as $\psi$.  

Under $\psi$, the existence of $y_1y_4, y_2y_5$ will not affect our arguments as the edges $x_1y_1, x_4y_4$, $x_2y_2$, $x_5y_5$ are already colored in a good partial coloring.  The existence of the edge $y_3y_6$ will not affect our arguments as we will not color $x_3y_3$ with 1.  

Observe that $|A_\psi(x_3y_3)|, |A_\psi(x_4x_5)|, |A_\psi(x_5x_6)| \ge 2$, $|A_\psi(x_ix_{i+1})| \ge 3$ for $i \in [3]$ and $|A_\psi(x_6x_0)|, |A_\psi(x_0x_1)| \ge 5$.   As a result, if we can somehow extend $\psi$ to a good partial coloring on the edges $x_1x_2, x_2x_3, x_3x_4, x_4x_5, x_5x_6, x_3y_3$, then we can extend this further by coloring $x_0x_1, x_6x_0$.  Thus, it suffices to consider the edges $x_1x_2, x_2x_3, x_3x_4, x_4x_5, x_5x_6, x_3y_3$ below.

\begin{claim}\label{cl3}
$A_\psi(x_4x_5) = A_\psi(x_5x_6)$ and $|A_\psi(x_4x_5)| = 2$.
\end{claim}

\begin{proof}
Suppose on the contrary that either $|A_\psi(x_5x_6)| \ge 3$ or $A_\psi(x_4x_5) \setminus A_\psi(x_5x_6) \neq \emptyset$.  In either case, we color $x_4x_5$ first, where in the latter case we use a color from $A_\psi(x_4x_5)\setminus A_\psi(x_5x_6)$.  Suppose that $\beta$ is the color we can apply to $x_4x_5$.  Note that there exists some $\gamma_1 \in  A_\psi(x_3y_3)\setminus\{\beta\}$ as an available color for $x_3y_3$.  

We aim to show that it is impossible for $A_\psi(x_2x_3) = A_\psi(x_3x_4) = \{\beta, \gamma_1, \gamma_2\}$ for some $\gamma_2 \notin \{\beta, \gamma_1\}$.  If this was the case, then as $1,2 \notin A_\psi(x_2x_3)$, we may assume that $\beta = 3, \gamma_1 = 4$ and $\gamma_2 = 5$.  Additionally, as $\alpha \notin \{\beta, \gamma_1, \gamma_2\}$, we may assume that $\alpha = 6$.    Thus, we  have $\Upsilon_\psi(y_3,x_3) \cup \Upsilon_\psi(y_4,x_4) = \{1,7,8,9\}$ and $\Upsilon_\psi(y_2,x_2) \cup \Upsilon_\psi(y_3,x_3) = \{6,7,8,9\}$.  This implies that $\Upsilon_\psi(y_2,x_2) \cap \Upsilon_\psi(y_4,x_4) \neq \emptyset$. However, recall that the auxiliary graph used to obtain $\phi$ contained $y_2y_4$.  As a result, $\Upsilon_\psi(y_2,x_2) \cap \Upsilon_\psi(y_4,x_4) = \emptyset$, a contradiction.  So we cannot have $A_\psi(x_2x_3) = A_\psi(x_3x_4) = \{\beta, \gamma_1, \gamma_2\}$, as desired.

As a result, if we color $x_4x_5$ with $\beta$ and $x_3y_3$ with $\gamma_1$, we can further color $x_2x_3, x_3x_4$ to obtain a good partial coloring of $G$, which we will call $\tau$.  Let $\gamma_2, \gamma_3 $ denote $\tau(x_2x_3), \tau(x_3x_4)$, respectively.  Without loss of generality, we may assume $\gamma_1 = 7, \gamma_2 = 8, \gamma_3 = 9$.   Recall that we are assuming either $|A_\psi(x_5x_6)| \ge 3$ or $\beta \in A_\psi(x_4x_5)\setminus A_\psi(x_5x_6)$.  So $A_\tau(x_5x_6) \neq \emptyset$, and if $A_\tau(x_1x_2) \neq \emptyset$, we can greedily color $x_1x_2, x_5x_6$ to obtain a good partial coloring which we can extend to all of $G$ as mentioned above.  

Thus, we  had $A_\psi(x_1x_2) = \{7,8,9\}$.  We may also assume that $\sU_\psi(y_1) = \{1,3,4\}$ and $\sU_\psi(y_2) = \{2,5,6\}$.  Under $\tau$, if we could recolor $x_2x_3$ with either 3 or 4, then we could color $x_1x_2$ with 8 and color $x_5x_6$ last to obtain our good partial coloring of $G$.  Thus, $3,4 \in \Upsilon_\tau(y_3,x_3) \cup \{\beta\}$.  A similar argument holds if we could recolor $x_3x_4$ with 1, 3, 4, 5, or 6, implying $1,3,4,5,6 \in \Upsilon_\tau(y_3,x_3) \cup \Upsilon_\tau(y_4,x_4) \cup \{\alpha,\beta\}$.  

Recall that $y_2y_4$ was an edge of $G'$ so that $\Upsilon_\tau(y_2,x_2) \cap \Upsilon_\tau(y_4,x_4) = \emptyset$.  In particular, $5,6 \notin \Upsilon_\tau(y_4,x_4)$.  Thus, we have $5,6 \in \Upsilon_\tau(y_3,x_3) \cup \{\alpha,\beta\}$, and consequently, $\Upsilon_\tau(y_3,x_3) \cup \{\alpha,\beta\} = \{3,4,5,6\} = \Upsilon_\tau(y_1,x_1) \cup \Upsilon_\tau(y_2,x_2)$, and $1 \in \Upsilon_\tau(y_4,x_3)$.  

Let us reconsider $\psi$.  As $1 \in \Upsilon_\psi(y_4,x_3)$, we have $|A_\psi(x_4x_5)| \ge 3$.  If either $|A_\psi(x_5x_6)| \ge 3$ or $|A_\psi(x_4x_5)\setminus A_\psi(x_5x_6)| \ge 2$, then instead of coloring $x_4x_5$ with $\beta$, we could color it with some $\beta' \neq \beta$ such that $x_5x_6$ would still have at least two colors available on it.  By repeating an argument similar to the above, we would then conclude that $\Upsilon_\tau(y_3,x_3) \cup \{\alpha, \beta'\} = \Upsilon_\tau(y_1,x_1) \cup \Upsilon_\tau(y_2,x_2)$, a contradiction, as it would imply $\beta = \beta'$.  

As a result, we have $|A_\psi(x_5x_6)| = 2$ and $|A_\psi(x_4x_5)\setminus A_\psi(x_5x_6)| = 1$.  We may assume that $A_\psi(x_5x_6) = \{\delta_1, \delta_2\}$ and $A_\psi(x_4x_5) = \{\beta, \delta_1, \delta_2\}$.  Recall that $\Upsilon_\psi(y_3,x_3) \cup \{\alpha,\beta\} = \{3,4,5,6\}$ so that $\beta \notin \Upsilon_\psi(y_3,x_3)$, and consequently, $\beta \in A_\psi(x_3x_4)$.  

If $\{\delta_1, \delta_2\} \neq \{7,8\}$, then we can color $x_4x_5$ with a color in $\{\delta_1, \delta_2\}\setminus \{7,8\}$, color $x_3x_4$ with $\beta$, $x_3y_3$ with 7, $x_2x_3$ with 8, $x_1x_2$ with 9 and color $x_5x_6$ last to obtain our good partial coloring of $G$.  If $\{\delta_1, \delta_2\} = \{7,8\}$, then we can color $x_1x_2, x_4x_5$ with 8 and  $x_3y_3, x_5x_6$ with 7.  This good partial coloring of $G$ leaves at least one available color on each of $x_2x_3, x_3x_4$.  In particular, 5 and 6 are not available on $x_2x_3$.  If 5 or 6 is in $\Upsilon_\psi(y_3,x_3)$, then $x_2x_3$ has at least two available colors and we obtain our good partial coloring of $G$.  Since we cannot have 5 or 6 in $\Upsilon_\psi(y_4,x_4)$, we must have either 5 or 6 available on $x_3x_4$.  Thus, we can color $x_3x_4, x_2x_3$ and obtain our good partial coloring of $G$.

As mentioned above, these good partial colorings of $G$ can be extended to good colorings of $G$, and this proves the claim.
\end{proof}

Without loss of generality suppose $\alpha = 3$.  As $1,2,3 \notin A_\psi(x_4x_5)$, we may assume that $A_\psi(x_4x_5) = A_\psi(x_5x_6) = \{8,9\}$.  Additionally, we may assume that $\Upsilon_\psi(y_6,x_6) = \{4,5\} = \Upsilon_\psi(y_4,x_4)$ and $\Upsilon_\psi(y_5,x_5) = \{6,7\}$.  If $1 \in A_\psi(x_3x_4)$, we can color $x_3x_4$ with 1 and then color $x_3y_3, x_4x_5, x_5x_6, x_2x_3, x_1x_2$ in this order to obtain our good partial coloring of $G$.  Thus, $1 \in \Upsilon_\psi(y_3,x_3)$, and so $|A_\psi(x_2x_3)| \ge 4$.

Recall that $|A_\psi(x_3x_4)| \ge 3$, and thus, $x_3x_4$ has an available color not in $\{8,9\}$.  As $1,2,3,4,5 \notin A_\psi(x_3x_4)$, we may assume without loss of generality that it is 6.  So, we color $x_3x_4$ with 6 and then color $x_3y_3, x_4x_5, x_5x_6, x_2x_3$ in this order.  Call this good partial coloring of $G$, $\tau$.  It remains only to color $x_1x_2$ to obtain a good partial coloring of $G$ that we can extend to all of $G$.  

We must have $A_\psi(x_1x_2) = \{6, \tau(x_2x_3), \tau(x_3y_3)\}$, otherwise we can color $x_1x_2$.  Recall that our auxiliary graph $G'$ contained the edges $y_1y_6, y_2y_4$ so that $\Upsilon_\psi(y_1,x_1) \cap \Upsilon_\psi(y_6,x_6) = \Upsilon_\psi(y_2,x_2) \cap \Upsilon(y_4,x_4) = \emptyset$.  Since $\Upsilon_\psi(y_4,x_4) = \Upsilon(y_6,x_6) = \{4,5\}$, we have $4,5 \in A_\psi(x_1x_2)$, and in particular, $A_\psi(x_1x_2) = \{4,5,6\}$ with $\{\tau(x_2x_3), \tau(x_3y_3)\} = \{4,5\}$.

Without loss of generality assume $\tau(x_3y_3) = 4$.  We may then extend $\psi$ by coloring $x_3x_4$ with 6, $x_3y_3$ with 4, $x_1x_2$ with 5 and then color $x_2x_3, x_4x_5, x_5x_6$ in this order to obtain our good partial coloring of $G$.

In all cases, we obtain a partial good coloring of $G$ from which we can extend to a good coloring of $G$ as mentioned above.  This proves the lemma. 
\end{proof}

\section{Adjacent Faces}\label{struct3}

By the lemmas in Section \ref{struct1}, every face in $G$ is a $5^+$-face.  In this section we show that if a face has length five, then it can only be adjacent to $7^+$-faces.

\begin{figure}[h]
\centering
\begin{tikzpicture}[line cap=round,line join=round,>=triangle 45,x=0.75cm,y=0.75cm]
\clip(-10, -6) rectangle (10,6);
\draw [->] (-1.0,0.0) -- (1.0,0.0);
\draw (-6.0,5.62)-- (-6.0,3.62);
\draw (-6.0,3.62)-- (-7.902113032590307,2.238033988749895);
\draw (-9.804226065180615,2.85606797749979)-- (-7.902113032590307,2.238033988749895);
\draw (-7.902113032590307,2.238033988749895)-- (-7.175570504584947,0.001966011250105315);
\draw (-7.175570504584947,0.001966011250105315)-- (-4.824429495415054,0.001966011250105093);
\draw (-4.824429495415054,0.001966011250105093)-- (-4.097886967409693,2.2380339887498946);
\draw (-4.097886967409693,2.2380339887498946)-- (-5.999999999999999,3.62);
\draw (-2.195773934819386,2.85606797749979)-- (-4.097886967409693,2.238033988749895);
\draw (-4.824429495415054,0.001966011250105093)-- (-4.097886967409693,2.2380339887498946);
\draw (-6.000000000000001,-5.6160679774997915)-- (-6.000000000000001,-3.6160679774997897);
\draw (-6.000000000000001,-3.6160679774997897)-- (-7.902113032590307,-2.2341019662496855);
\draw (-9.804226065180615,-2.8521359549995795)-- (-7.902113032590307,-2.2341019662496855);
\draw (-7.902113032590307,-2.2341019662496855)-- (-7.175570504584947,0.0019660112501051485);
\draw (-7.175570504584947,0.0019660112501051485)-- (-4.824429495415054,0.001966011250104982);
\draw (-4.824429495415054,0.001966011250104982)-- (-4.097886967409693,-2.234101966249685);
\draw (-4.097886967409693,-2.234101966249685)-- (-6.0,-3.6160679774997897);
\draw (-2.1957739348193863,-2.8521359549995813)-- (-4.097886967409694,-2.2341019662496855);
\draw (-4.824429495415054,0.001966011250104982)-- (-4.097886967409693,-2.234101966249685);
\draw (2.195773934819386,2.85606797749979)-- (6,2);
\draw (6,-2)-- (2.1957739348193854,-2.8521359549995795);
\draw (9.804226065180615,2.85606797749979)-- (6,2);
\draw (6,-2)-- (9.804226065180615,-2.8521359549995813);
\draw (6.0,5.62)-- (6,2);
\draw (6.0,-2)-- (5.999999999999999,-5.6160679774997915);
\draw (-5.9,5.8) node[anchor=north west] {$y_2$};
\draw (6.1,5.8) node[anchor=north west] {$y_2$};
\draw (-3,3.6) node[anchor=north west] {$y_3$};
\draw (-3,-2) node[anchor=north west] {$y_5$};
\draw (-5.9,-5.1) node[anchor=north west] {$y_6$};
\draw (-9.7,-2) node[anchor=north west] {$y_7$};
\draw (-9.7,3.6) node[anchor=north west] {$y_1$};
\draw (9,3.6) node[anchor=north west] {$y_3$};
\draw (2.3,3.6) node[anchor=north west] {$y_1$};
\draw (9,-2) node[anchor=north west] {$y_5$};
\draw (6.1, -5.1) node[anchor=north west] {$y_6$};
\draw (2.3,-2) node[anchor=north west] {$y_7$};
\draw (-6.4,3.5) node[anchor=north west] {$x_2$};
\draw (-5,2.5) node[anchor=north west] {$x_3$};
\draw (-5.6,.6) node[anchor=north west] {$x_4$};
\draw (-7.3, .6) node[anchor=north west] {$x_0$};
\draw (-7.8,2.5) node[anchor=north west] {$x_1$};
\draw (-7.8, -1.8) node[anchor=north west] {$x_7$};
\draw (-6.4,-2.8) node[anchor=north west] {$x_6$};
\draw (-5,-1.8) node[anchor=north west] {$x_5$};
\draw (5.7,1.9) node[anchor=north west] {$u$};
\draw (5.7,-1.3) node[anchor=north west] {$v$};
\begin{scriptsize}
\draw [fill=black] (-6.0,3.62) circle (1.5pt);
\draw [fill=black] (-7.902113032590307,2.238033988749895) circle (1.5pt);
\draw [fill=black] (-7.175570504584947,0.001966011250105315) circle (1.5pt);
\draw [fill=black] (-4.824429495415054,0.001966011250105093) circle (1.5pt);
\draw [fill=black] (-4.097886967409693,2.2380339887498946) circle (1.5pt);
\draw [fill=black] (-6.0,5.62) circle (1.5pt);
\draw [fill=black] (6.0,2) circle (1.5pt);
\draw [fill=black] (6.0,-2) circle (1.5pt);
\draw [fill=black] (-9.804226065180615,2.85606797749979) circle (1.5pt);
\draw [fill=black] (-7.902113032590307,2.238033988749895) circle (1.5pt);
\draw [fill=black] (-7.902113032590307,2.238033988749895) circle (1.5pt);
\draw [fill=black] (-7.175570504584947,0.001966011250105315) circle (1.5pt);
\draw [fill=black] (-7.175570504584947,0.001966011250105315) circle (1.5pt);
\draw [fill=black] (-7.175570504584947,0.001966011250105315) circle (1.5pt);
\draw [fill=black] (-4.824429495415054,0.001966011250105093) circle (1.5pt);
\draw [fill=black] (-7.175570504584947,0.001966011250105315) circle (1.5pt);
\draw [fill=black] (-4.824429495415054,0.001966011250105093) circle (1.5pt);
\draw [fill=black] (-4.824429495415054,0.001966011250105093) circle (1.5pt);
\draw [fill=black] (-4.824429495415054,0.001966011250105093) circle (1.5pt);
\draw [fill=black] (-4.097886967409693,2.2380339887498946) circle (1.5pt);
\draw [fill=black] (-4.824429495415054,0.001966011250105093) circle (1.5pt);
\draw [fill=black] (-4.097886967409693,2.2380339887498946) circle (1.5pt);
\draw [fill=black] (-4.824429495415054,0.001966011250105093) circle (1.5pt);
\draw [fill=black] (-4.824429495415054,0.001966011250105093) circle (1.5pt);
\draw [fill=black] (-4.097886967409693,2.2380339887498946) circle (1.5pt);
\draw [fill=black] (-4.097886967409693,2.2380339887498946) circle (1.5pt);
\draw [fill=black] (-4.097886967409693,2.2380339887498946) circle (1.5pt);
\draw [fill=black] (-5.999999999999999,3.62) circle (1.5pt);
\draw [fill=black] (-4.097886967409693,2.2380339887498946) circle (1.5pt);
\draw [fill=black] (-5.999999999999999,3.62) circle (1.5pt);
\draw [fill=black] (-4.097886967409693,2.2380339887498946) circle (1.5pt);
\draw [fill=black] (-4.097886967409693,2.2380339887498946) circle (1.5pt);
\draw [fill=black] (-4.097886967409693,2.2380339887498946) circle (1.5pt);
\draw [fill=black] (-5.999999999999999,3.62) circle (1.5pt);
\draw [fill=black] (-4.097886967409693,2.2380339887498946) circle (1.5pt);
\draw [fill=black] (-4.097886967409693,2.2380339887498946) circle (1.5pt);
\draw [fill=black] (-5.999999999999999,3.62) circle (1.5pt);
\draw [fill=black] (-2.195773934819386,2.85606797749979) circle (1.5pt);
\draw [fill=black] (-4.097886967409693,2.238033988749895) circle (1.5pt);
\draw [fill=black] (-4.824429495415054,0.001966011250105093) circle (1.5pt);
\draw [fill=black] (-4.097886967409693,2.2380339887498946) circle (1.5pt);
\draw [fill=black] (-4.824429495415054,0.001966011250105093) circle (1.5pt);
\draw [fill=black] (-4.824429495415054,0.001966011250105093) circle (1.5pt);
\draw [fill=black] (-4.824429495415054,0.001966011250105093) circle (1.5pt);
\draw [fill=black] (-4.824429495415054,0.001966011250105093) circle (1.5pt);
\draw [fill=black] (-4.824429495415054,0.001966011250105093) circle (1.5pt);
\draw [fill=black] (-4.824429495415054,0.001966011250105093) circle (1.5pt);
\draw [fill=black] (-4.097886967409693,2.2380339887498946) circle (1.5pt);
\draw [fill=black] (-4.824429495415054,0.001966011250105093) circle (1.5pt);
\draw [fill=black] (-4.824429495415054,0.001966011250105093) circle (1.5pt);
\draw [fill=black] (-4.824429495415054,0.001966011250105093) circle (1.5pt);
\draw [fill=black] (-4.097886967409693,2.2380339887498946) circle (1.5pt);
\draw [fill=black] (-4.824429495415054,0.001966011250105093) circle (1.5pt);
\draw [fill=black] (-4.824429495415054,0.001966011250105093) circle (1.5pt);
\draw [fill=black] (-4.097886967409693,2.2380339887498946) circle (1.5pt);
\draw [fill=black] (-6.000000000000001,-5.6160679774997915) circle (1.5pt);
\draw [fill=black] (-6.000000000000001,-3.6160679774997897) circle (1.5pt);
\draw [fill=black] (-6.000000000000001,-3.6160679774997897) circle (1.5pt);
\draw [fill=black] (-7.902113032590307,-2.2341019662496855) circle (1.5pt);
\draw [fill=black] (-9.804226065180615,-2.8521359549995795) circle (1.5pt);
\draw [fill=black] (-7.902113032590307,-2.2341019662496855) circle (1.5pt);
\draw [fill=black] (-7.902113032590307,-2.2341019662496855) circle (1.5pt);
\draw [fill=black] (-7.175570504584947,0.0019660112501051485) circle (1.5pt);
\draw [fill=black] (-7.175570504584947,0.0019660112501051485) circle (1.5pt);
\draw [fill=black] (-4.824429495415054,0.001966011250104982) circle (1.5pt);
\draw [fill=black] (-4.824429495415054,0.001966011250104982) circle (1.5pt);
\draw [fill=black] (-4.097886967409693,-2.234101966249685) circle (1.5pt);
\draw [fill=black] (-4.097886967409693,-2.234101966249685) circle (1.5pt);
\draw [fill=black] (-6.0,-3.6160679774997897) circle (1.5pt);
\draw [fill=black] (-2.1957739348193863,-2.8521359549995813) circle (1.5pt);
\draw [fill=black] (-4.097886967409694,-2.2341019662496855) circle (1.5pt);
\draw [fill=black] (-4.824429495415054,0.001966011250104982) circle (1.5pt);
\draw [fill=black] (-4.097886967409693,-2.234101966249685) circle (1.5pt);
\draw [fill=black] (-7.175570504584947,0.0019660112501051485) circle (1.5pt);
\draw [fill=black] (-4.824429495415054,0.001966011250104982) circle (1.5pt);
\draw [fill=black] (-4.097886967409693,-2.234101966249685) circle (1.5pt);
\draw [fill=black] (-7.175570504584947,0.0019660112501051485) circle (1.5pt);
\draw [fill=black] (-7.175570504584947,0.0019660112501051485) circle (1.5pt);
\draw [fill=black] (-4.824429495415054,0.001966011250104982) circle (1.5pt);
\draw [fill=black] (-4.824429495415054,0.001966011250104982) circle (1.5pt);
\draw [fill=black] (-4.824429495415054,0.001966011250104982) circle (1.5pt);
\draw [fill=black] (-4.097886967409693,-2.234101966249685) circle (1.5pt);
\draw [fill=black] (-4.824429495415054,0.001966011250104982) circle (1.5pt);
\draw [fill=black] (-4.824429495415054,0.001966011250104982) circle (1.5pt);
\draw [fill=black] (-4.097886967409693,-2.234101966249685) circle (1.5pt);
\draw [fill=black] (-4.097886967409693,-2.234101966249685) circle (1.5pt);
\draw [fill=black] (-4.097886967409693,-2.234101966249685) circle (1.5pt);
\draw [fill=black] (-6.0,-3.6160679774997897) circle (1.5pt);
\draw [fill=black] (-4.097886967409693,-2.234101966249685) circle (1.5pt);
\draw [fill=black] (-4.097886967409693,-2.234101966249685) circle (1.5pt);
\draw [fill=black] (-4.097886967409693,-2.234101966249685) circle (1.5pt);
\draw [fill=black] (-6.0,-3.6160679774997897) circle (1.5pt);
\draw [fill=black] (-4.097886967409693,-2.234101966249685) circle (1.5pt);
\draw [fill=black] (-4.097886967409693,-2.234101966249685) circle (1.5pt);
\draw [fill=black] (-6.0,-3.6160679774997897) circle (1.5pt);
\draw [fill=black] (-4.824429495415054,0.001966011250104982) circle (1.5pt);
\draw [fill=black] (-4.824429495415054,0.001966011250104982) circle (1.5pt);
\draw [fill=black] (-4.824429495415054,0.001966011250104982) circle (1.5pt);
\draw [fill=black] (-4.824429495415054,0.001966011250104982) circle (1.5pt);
\draw [fill=black] (-4.824429495415054,0.001966011250104982) circle (1.5pt);
\draw [fill=black] (-4.824429495415054,0.001966011250104982) circle (1.5pt);
\draw [fill=black] (-4.097886967409693,-2.234101966249685) circle (1.5pt);
\draw [fill=black] (-4.824429495415054,0.001966011250104982) circle (1.5pt);
\draw [fill=black] (-4.824429495415054,0.001966011250104982) circle (1.5pt);
\draw [fill=black] (-4.824429495415054,0.001966011250104982) circle (1.5pt);
\draw [fill=black] (-4.097886967409693,-2.234101966249685) circle (1.5pt);
\draw [fill=black] (-4.824429495415054,0.001966011250104982) circle (1.5pt);
\draw [fill=black] (-4.824429495415054,0.001966011250104982) circle (1.5pt);
\draw [fill=black] (-4.097886967409693,-2.234101966249685) circle (1.5pt);
\draw [fill=black] (2.1957739348193854,-2.8521359549995795) circle (1.5pt);
\draw [fill=black] (5.999999999999999,-5.6160679774997915) circle (1.5pt);
\draw [fill=black] (9.804226065180615,-2.8521359549995813) circle (1.5pt);
\draw [fill=black] (9.804226065180615,2.85606797749979) circle (1.5pt);
\draw [fill=black] (6.0,5.62) circle (1.5pt);
\draw [fill=black] (2.195773934819386,2.85606797749979) circle (1.5pt);
\draw [fill=black] (2.195773934819386,2.85606797749979) circle (1.5pt);
\draw [fill=black] (2.1957739348193854,-2.8521359549995795) circle (1.5pt);
\draw [fill=black] (6.0,5.62) circle (1.5pt);
\draw [fill=black] (5.999999999999999,-5.6160679774997915) circle (1.5pt);
\draw [fill=black] (9.804226065180615,2.85606797749979) circle (1.5pt);
\draw [fill=black] (9.804226065180615,-2.8521359549995813) circle (1.5pt);
\end{scriptsize}
\end{tikzpicture}
\caption{Forming $G'$ from $G$}
\label{fig:No5-5face}
\end{figure}

\begin{lemma}\label{No5-5face}
No two 5-faces in $G$ share an edge.
\end{lemma}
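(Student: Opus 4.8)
The plan is to assume, for contradiction, that two $5$-faces of $G$ share an edge, and to reduce the configuration to a strictly smaller graph by the deletion-and-recoloring scheme used throughout Section~\ref{struct2}. First I would fix notation as in Figure~\ref{fig:No5-5face}: let the shared edge be $x_0x_4$, and let the two $5$-faces be bounded by $x_0x_1x_2x_3x_4$ and $x_0x_4x_5x_6x_7$. By Lemma~\ref{No2on5cycle} every vertex of a $5$-cycle is a $3$-vertex, so each of $x_0,\dots,x_7$ has degree exactly $3$; the third edges at $x_0$ and $x_4$ are frame edges, while each $x_i$ with $i\in\{1,2,3,5,6,7\}$ has a third neighbour $y_i$ lying off the two faces.

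Next I would pin down the structure of the $y_i$. Using Lemmas~\ref{NoTriangle}, \ref{No4cycle}, \ref{separating}, \ref{No2on5cycle} and \ref{No2on6cycle}, I would verify that $y_1,y_2,y_3,y_5,y_6,y_7$ are distinct, lie off the two faces, and that the only adjacencies among them not immediately forbidden by a short or separating cycle are a few ``long'' ones, none of which interferes with the argument below (exactly as in Case~2 of Lemma~\ref{No2on6cycle} and in Lemma~\ref{No2on7face}). I would then form $G'$ from $G$ by deleting $x_0,\dots,x_7$ and adding two new $3$-vertices $u,v$, with $u$ adjacent to $y_1,y_2,y_3$ and $v$ adjacent to $y_5,y_6,y_7$. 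Then $G'$ is a subcubic planar multigraph with fewer vertices than $G$, so by the minimality of $G$ it has a good coloring $\phi$.

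I would transfer $\phi$ to a partial coloring of $G$ by setting $x_iy_i:=\phi(uy_i)$ for $i\in\{1,2,3\}$ and $x_iy_i:=\phi(vy_i)$ for $i\in\{5,6,7\}$, leaving the nine frame edges uncolored. The purpose of attaching $u$ (resp.\ $v$) is that $\phi$ automatically makes $\phi(uy_1),\phi(uy_2),\phi(uy_3)$ (resp.\ $\phi(vy_5),\phi(vy_6),\phi(vy_7)$) pairwise distinct, which is precisely what the ``sees'' relations among the pendant edges $x_1y_1,x_2y_2,x_3y_3$ (resp.\ $x_5y_5,x_6y_6,x_7y_7$) require; combined with the fact that $\phi$ is good at each $y_i$, this shows the transferred coloring is a good partial coloring of $G$.

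Finally I would extend across the frame. A direct count gives $|A_\phi(x_0x_4)|\ge 5$, since $x_0x_4$ sees only the four pendants $x_1y_1,x_3y_3,x_5y_5,x_7y_7$, and $|A_\phi(e)|\ge 6$ for each of the remaining eight frame edges, since each of those sees exactly three pendants. As every uncolored edge then has at least five available colors out of nine, Hall's condition for an SDR can only fail for large subcollections whose available sets are concentrated on few colors; ruling these out is the step I expect to be the real obstacle. As in Lemmas~\ref{No2on6cycle} and \ref{No2on7face}, I would handle the degenerate configurations by coloring the frame in a carefully chosen order (beginning with the low-availability edge $x_0x_4$) and, when the greedy extension stalls, deriving a contradiction from the extra color-avoidance relations that attaching $u$ and $v$ forces on the neighbourhoods of the $y_i$ (for instance, the color of $x_1y_1$ is forbidden on the other edges at $y_2$ and $y_3$, because $uy_1$ sees them in $G'$). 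In every case this produces a good coloring of $G$, contradicting the choice of $G$.
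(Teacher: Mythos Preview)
Your reduction to $G'$ is exactly the paper's, but the availability count is wrong, and this undermines the entire extension plan. You claim $|A_\phi(e)|\ge 6$ for each frame edge other than $x_0x_4$ because ``each of those sees exactly three pendants.'' This forgets that a frame edge incident to $x_i$ (for $i\in\{1,2,3,5,6,7\}$) also sees, via the pendant $x_iy_i$, the two other colored edges at $y_i$. For example, $x_1x_2$ sees the pendants $x_1y_1,x_2y_2,x_3y_3$ \emph{and} the four edges of $\Upsilon_\phi(y_1,x_1)\cup\Upsilon_\phi(y_2,x_2)$, so in fact $|A_\phi(x_1x_2)|\ge 2$, not $\ge 6$. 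The correct bounds are $|A_\phi(x_ix_{i+1})|\ge 2$ for $i\in\{1,2,5,6\}$, $\ge 4$ for $i\in\{0,3,4,7\}$, and $|A_\phi(x_0x_4)|\ge 5$.

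With four of the nine uncolored edges having only two available colors, Hall's condition fails generically, not just in ``degenerate configurations''; no greedy order will push through without substantial additional input. The paper, after the same reduction, normalizes $\phi(x_1y_1)=1$, $\phi(x_3y_3)=2$ and runs a five-case analysis on $(\phi(x_5y_5),\phi(x_7y_7))$, each case with its own subcases, recolorings, and precolorings of specific frame edges. The constraints you mention at the end (that $\phi(x_iy_i)\notin\Upsilon_\phi(y_j,x_j)$ for $i,j$ on the same side, coming from the star at $u$ or $v$ in $G'$) are indeed the key tool, but they are applied repeatedly and delicately inside that case analysis; your proposal contains the correct setup and the right observation about these constraints, but none of the actual extension argument.
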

\begin{proof}
Suppose the contrary.  By Lemma \ref{No2on5cycle}, the boundaries of the two faces form an 8-cycle, $x_0x_1\dots x_7$ with $x_4x_0 \in E(G)$.  By Lemmas \ref{NoTriangle}, \ref{No4cycle}, \ref{separating} and \ref{No2on5cycle}, each $x_i$ other than $x_4,x_0$ has a third neighbor $y_i$ not on the 8-cycle that are distinct from each other, except possibly $y_2 = y_6$.  Additionally, the only possible adjacencies between the $y_i$'s are $y_iy_j$ for $i \in [3]$ and $j \in \{5,6,7\}$.  

Let $G'$ denote the graph obtained from $G$ by removing $x_0,\dots, x_7$, adding two new vertices $u,v$ and the edges $uy_1,uy_2, uy_3, vy_5, vy_6, vy_7$ (see Figure \ref{fig:No5-5face}).  Observe that $G'$ is a subcubic, planar multigraph, and so by the minimality of $G$, $G'$ has a good coloring, which ignoring $uy_1,uy_2, uy_3, vy_5, vy_6, vy_7$ gives us a good partial coloring of $G$ that can be extended by coloring $x_jy_j$ with the same color as $uy_j, j \in [3]$ and $x_\ell y_\ell$ with the same color as $vy_\ell$, for $\ell \in \{5,6,7\}$.  This new partial coloring of $G$ is still a good partial coloring, and we will refer to it as $\phi$.  

By the construction of $G'$, we see that $\phi(x_1y_1) \neq \phi(x_3y_3)$ and $\phi(x_5y_5) \neq \phi(x_7y_7)$.  Without loss of generality, we may assume that $\phi(x_1y_1) = 1$ and $\phi(x_3y_3) = 2$.  We will break the following into cases depending on $(\phi(x_5y_5), \phi(x_7y_7))$.  

\setcounter{case}{0}
\begin{case}
$(\phi(x_5y_5), \phi(x_7y_7)) = (3,4)$.
\end{case}

Observe that $|A_\phi(x_ix_{i+1})| \ge 2$ for $i \in \{1,2,5,6\}$, $|A_\phi(x_jx_{j+1})| \ge 4$ for $j \in \{0,3,4,7\}$ taken modulo 8 and $A_\phi(x_4x_0) = \{5,6,7,8,9\}$.  By the construction of $G'$, we can extend $\phi$ to another good partial coloring of $G$ by coloring $x_3x_4, x_4x_5, x_7x_0, x_0x_1$ with $1,4,3,2$, respectively.  We will call this good partial coloring $\sigma$.  Note that $|A_\sigma(x_ix_{i+1})| \ge 1$ for $i \in \{1,2,5,6\}$ and $A_\sigma(x_4x_0) = \{5,6,7,8,9\}$.  

If $|A_\sigma(x_1x_2) \cup A_\sigma(x_2x_3)|, |A_\sigma(x_5x_6) \cup A_\sigma(x_6x_7)| \ge 2$, we can color $x_1x_2$,$x_2x_3$, $x_5x_6$, $x_6x_7, x_4x_0$ in this order to obtain a good coloring of $G$.   By symmetry, we have two subcases to consider. 

\begin{subcase}%
$|A_\sigma(x_{1}x_{2}) \cup A_\sigma(x_{2}x_{3})| = |A_\sigma(x_{5}x_{6}) \cup A_\sigma(x_{6}x_{7})| = 1$. 
\end{subcase}

Let $A_\sigma(x_{1}x_{2}) = A_\sigma(x_{2}x_{3}) = \{\alpha\}$ and $A_\sigma(x_{5}x_{6}) = A_\sigma(x_{6}x_{7}) = \{\beta\}$.  Since $\alpha \notin \{1,2,3,4\}$, we  have $3 \in \Upsilon_\sigma(y_2,x_2) \cup \Upsilon(y_3,x_3)$.  However, if $3 \in \sU_\sigma(y_2)$, then $|A_\sigma(x_1x_2)| \ge 2$, a contradiction.  Thus, $\sU_\sigma(y_3) = \{2,3,\gamma\}$ for some $\gamma \notin [4]$, since $G$ is a counterexample.  By a similar argument, we  have $4 \in \sU_\sigma(y_1)$, and as $A_\sigma(x_1x_2) = A_\sigma(x_2x_3)$, we have $\sU_\sigma(y_1) = \{1,4,\gamma\}$.  Symmetrically, $\sU_\sigma(y_5) = \{2,3,\delta\}$ and $\sU_\sigma(y_7) = \{1,4,\delta\}$, where $\delta \notin [4]$.

Now, as $4 \in \sU_\sigma(y_1)$ and $|A_\sigma(x_1x_2)| = 1$, we cannot have $4 \in \sU_\sigma(y_2)$.  Thus, $4 \in A_\phi(x_2x_3)$.  Similarly, $2 \in A_\phi(x_6x_7)$.  Thus, we can extend $\phi$ by coloring $x_1x_2$ with $\alpha$, $x_2x_3$ with 4, $x_3x_4$ with 1, $x_5x_6$ with $\beta$, $x_6x_7$ with 2, $x_7x_0$ with 3 and color $x_4x_5, x_0x_1, x_4x_0$ in this order.  This gives us a good partial coloring of $G$ and completes this subcase.

\begin{subcase}%
$|A_\sigma(x_{1}x_{2}) \cup A_\sigma(x_{2}x_{3})| \geq 2$ and $|A_\sigma(x_{5}x_{6}) \cup A_\sigma(x_{6}x_{7})| = 1$. 
\end{subcase}

Suppose  $A_\sigma(x_{5}x_{6}) = A_\sigma(x_{6}x_{7}) = \{\beta\}$.   Now $2 \notin \sU_\phi(y_6) \cup \sU_\phi(y_7)$, as otherwise $|A_\sigma(x_6x_7)| \ge 2$, a contradiction.  Thus, $2 \in A_\phi(x_6x_7)$, and by symmetry, $1 \in A_\phi(x_5x_6)$.  Now, we can alter $\sigma$ to another good partial coloring by uncoloring $x_0x_1$ and then coloring $x_5x_6$ with $\beta$ and $x_6x_7$ with 2.  Call this new partial coloring $\psi$.  Note that $|A_{\psi}(x_0x_1)| \ge 2$ and $|A_{\psi}(x_ix_{i+1})| \ge1$ for $i \in \{1,2\}$.  Since the only change affecting the edges available on $x_1x_2, x_2x_3$ was the uncoloring of $x_0x_1$, we still have $|A_{\psi}(x_1x_2) \cup A_{\psi}(x_2x_3)| \ge 2$.  

If $|A_{\psi}(x_0x_1) \cup A_{\psi}(x_1x_2) \cup A_{\psi}(x_2x_3)| \ge 3$, then we can obtain a good coloring of $G$ by SDR.  So we  have $|A_{\psi}(x_0x_1)| = 2$ and $A_{\psi}(x_1x_2) \cup A_{\psi}(x_2x_3) = A_{\psi}(x_0x_1)$.  In particular, $A_\psi(x_1x_2) \subseteq A_\psi(x_0x_1)$.

Since $|A_{\psi}(x_0x_1)| = 2$ and $x_0x_1$ sees $x_7y_7$ colored 4, we cannot have $4 \in \sU_{\psi}(y_1) \cup \{\psi(x_2y_2)\}$.  If $4 \notin \Upsilon_\psi(y_2,x_2)$, then $4 \in A_\psi(x_1x_2)\setminus A_\psi(x_0x_1)$, a contradiction to $A_\psi(x_1x_2)\subseteq A_\psi(x_0x_1)$.  Thus, $4 \in \Upsilon_\psi(y_2,x_2)$, and so $|A_{\psi}(x_2x_3)| = 2$.  Furthermore, we cannot have 4  in $\sU_{\psi}(y_3) = \sU_\sigma(y_3)$, as otherwise $|A_\psi(x_2x_3)| \ge 3$.  Returning to $\phi$, this implies $4 \in A_\phi(x_3x_4)$. 

Recall that $1 \in A_\phi(x_5x_6)$.  By a symmetric argument, $3 \in \Upsilon_\psi(y_2,x_2)$.  Thus $\Upsilon_\psi(y_2,x_2) = \Upsilon_\sigma(y_2,x_2) = \{3,4\}$.  Now, we can alter $\sigma$ by first uncoloring $x_4x_5$, then recoloring $x_3x_4$ with 4 and coloring $x_5x_6$ with 1, $x_6x_7$ with $\beta$.  By the above, this is another good partial coloring, call it $\tau$.  

Note that $|A_{\tau}(x_4x_5)| \ge 1, |A_{\tau}(x_1x_2)|, |A_{\tau}(x_2x_3)| \ge 2$ and $|A_{\tau}(x_4x_0)| \ge 4$.  We can then color $x_4x_5, x_2x_3, x_1x_2, x_4x_0$ in this order to obtain a good coloring of $G$.

This completes the subcase and so proves the case.

\begin{case}
$(\phi(x_5y_5), \phi(x_7y_7)) = (1,3)$.
\end{case}

First, notice that one can recolor $x_1y_1$ with a color other than 1, call it $\alpha$, and still maintain a good partial coloring of $G$.  We will proceed in this case based on whether or not $\alpha$ is 2.

\begin{subcase}
$\alpha \neq 2$.
\end{subcase}

We can extend our good partial coloring of $G$ by coloring $x_2x_3, x_7x_0$ with 1, $x_4x_5$ with 3 and $x_0x_1$ with 2.  Call this new coloring $\sigma$.  

Note that $|A_\sigma(x_1x_2)|, |A_\sigma(x_6x_7)| \ge 1$, $|A_\sigma(x_5x_6)| \ge 2$, $|A_\sigma(x_3x_4)| \ge 3$ and $|A_\sigma(x_4x_0)| \ge 5$.  Thus, we can color $x_6x_7, x_5x_6, x_1x_2, x_3x_4, x_4x_0$ in this order to obtain a good coloring of $G$.

\begin{subcase}\label{(1,3).2}
$\alpha = 2$.
\end{subcase}

We can extend our good partial coloring of $G$ by coloring $x_2x_3, x_7x_0$ with 1 and $x_4x_5$ with 3.  Call this new coloring $\sigma$.  

Note that $|A_\sigma(x_ix_{i+1})| \ge 2$ for $i \in \{1,5,6\}$, $|A_\sigma(x_3x_4)|, |A_\sigma(x_0x_1)| \ge 3$ and $|A_\sigma(x_4x_0)| \ge 6$.  If there exists some $\beta \in A_\sigma(x_6x_7) \cap A_\sigma(x_1x_2)$,  we can color $x_1x_2, x_6x_7$ with $\beta$ and then color $x_5x_6, x_3x_4, x_1x_0, x_4x_0$ in this order to obtain a good partial coloring of $G$.  

As a result, either $|A_\sigma(x_1x_0)| \ge 4$ or there exists some $\gamma \in (A_\sigma(x_1x_2) \cup A_\sigma(x_6x_7)) \setminus A_\sigma(x_1x_0)$.  In either case, we color $x_1x_2, x_6x_7$ in this order (in particular, using $\gamma$ on at least one edge in the latter case), then color $x_5x_6, x_3x_4, x_1x_0, x_4x_0$ in this order to obtain a good coloring of $G$.

This completes the subcase, and so proves the case.

\begin{case}
$(\phi(x_5y_5), \phi(x_7y_7)) = (1,2)$.
\end{case}

As in the previous case, we can recolor $x_1y_1$ with a color $\alpha \neq 1$ so that we still maintain a good partial coloring of $G$.  We proceed in subcases as above.

\begin{subcase}\label{(1,2).1}
$\alpha = 2$.
\end{subcase}

We can extend our good partial coloring of $G$ by coloring $x_2x_3, x_7x_0$ with 1.  Call this new coloring $\sigma$.  

Note that $|A_\sigma(x_ix_{i+1})| \ge 2$ for $i \in \{1,5,6\}$, $|A_\sigma(x_jx_{j+1})| \ge 4$ for $j \in \{0,3,4\}$ modulo 8 and $|A_\sigma(x_4x_0)| \ge 7$.  Now, either $|A_\sigma(x_1x_2)| \ge 4$ or there exists $\beta \in A_\sigma(x_3x_4) \setminus A_\sigma(x_1x_2)$.  In either case, we color $x_3x_4$ first (in particular, with $\beta$ in the latter case), then color $x_5x_6, x_6x_7, x_4x_5, x_0x_1, x_1x_2, x_4x_0$ to obtain our good coloring of $G$.

\begin{subcase}
$\alpha \neq 2$.
\end{subcase}

Just as with $x_1y_1$, we can recolor $x_3y_3$ with another color $\beta \neq 2$ and still maintain a good partial coloring of $G$.  By the above subcase, we may assume that $\beta \neq 1$, but it is possible that $\alpha = \beta$.  We can  extend our good partial coloring of $G$ by coloring $x_1x_2, x_4x_5$ with 2 and $x_2x_3, x_7x_0$ with 1.  Call this new coloring $\sigma$.

Note that $|A_\sigma(x_5x_6)|, |A_\sigma(x_6x_7)| \ge 2$, $|A_\sigma(x_3x_4)|, |A_\sigma(x_0x_1)| \ge 3$ and $|A_\sigma(x_4x_0)| \ge 5$.  We can then color $x_5x_6, x_6x_7, x_0x_1, x_3x_4, x_4x_0$ in this order to obtain a good partial coloring of $G$.

This completes the subcase and so proves the case.

\begin{case}
$(\phi(x_5y_5), \phi(x_7y_7)) = (2,1)$.
\end{case}

Again, we recolor $x_1y_1$ with $\alpha \neq 1$.  

\begin{subcase}
$\alpha = 2$.
\end{subcase}

This subcase is symmetric to Subcase \ref{(1,2).1}.

\begin{subcase}
$\alpha \neq 2$.
\end{subcase}

We can extend our good partial coloring of $G$ by coloring $x_1x_2, x_4x_5$ with 1 and $x_7x_0$ with 2.  Call this new coloring $\sigma$.

Note that $|A_\sigma(x_2x_3)| \ge 1, |A_\sigma(x_5x_6)|, |A_\sigma(x_6x_7)| \ge 2, |A_\sigma(x_0x_1)| \ge 3$, $|A_\sigma(x_3x_4)| \ge 4$ and $|A_\sigma(x_4x_0)| \ge 6$.  We can color $x_2x_3, x_5x_6, x_6x_7, x_0x_1, x_3x_4, x_4x_0$ in this order to obtain a good partial coloring of $G$.  This completes the subcase and so completes the case.

\begin{case}
$(\phi(x_5y_5), \phi(x_7y_7)) = (3,1)$.
\end{case}

Observe that $|A_\phi(x_ix_{i+1})| \ge 2$ for $i \in \{1,2,5,6\}$, $|A_\phi(x_jx_{j+1})| \ge 4$ for $j \in \{3,4\}$, $|A_\phi(x_\ell x_{\ell+1})| \ge 5$ for $\ell \in \{0,7\}$ modulo 8 and $A_\phi(x_4x_0) = \{4,5,6,7,8,9\}$.  We can extend $\phi$ by coloring $x_3x_4, x_7x_0, x_0x_1$ with 1,3,2 respectively.  We can further extend this new coloring by coloring $x_1x_2, x_2x_3$ in this order as $|A_\phi(x_1x_2)\setminus \{1,2,3\}| \ge 1$ and $|A_\phi(x_2x_3) \setminus\{1,2,3\}| \ge 2$.  This is another good partial coloring of $G$, and we will refer to it as $\sigma$ in this case.  Let $\alpha := \sigma(x_2x_3)$, and since $x_1x_2$  sees $1,2,3$, we may assume that $\sigma(x_1x_2) = 4$

Note that $|A_\sigma(x_6x_7)| \ge 1$, $|A_\sigma(x_5x_6)|, |A_\sigma(x_4x_5)| \ge 2$ and $|A_\sigma(x_4x_0)| \ge 5$.  We  have $A_\sigma(x_6x_7) \subseteq A_\sigma(x_5x_6) = A_\sigma(x_4x_5)$ and $|A_\sigma(x_4x_5)| =2$, otherwise we obtain a good coloring of $G$ by SDR.   So let $A_\sigma(x_5x_6) = A_\sigma(x_4x_5) = \{\beta_1, \beta_2\}$.  Note that $1,2,3,\alpha \notin \{\beta_1,\beta_2\}$.

Since $|A_\sigma(x_4x_5)| = 2$ and $x_4x_5$ sees 2 and $\alpha$, we cannot have $2,\alpha \in \sU_\sigma(y_5) \cup \{\sigma(x_6y_6)\}$.  As $x_5x_6$ must also see 2 and $\alpha$, we  have $\Upsilon_\sigma(y_6,x_6) = \{2,\alpha\}$.  Thus, $|A_\sigma(x_6x_7)| \ge 2$, and in particular, $A_\sigma(x_6x_7) = \{\beta_1,\beta_2\}$ as $A_\sigma(x_6x_7) \subseteq A_\sigma(x_4x_5)$.

Now, we can return to $\phi$ and obtain a different partial coloring of $G$ by coloring $x_4x_5$ with 1, $x_5x_6$ with $\beta_1$, $x_6x_7$ with $\beta_2$, $x_7x_0$ with 3 and $x_0x_1$ with 2.  This partial coloring is also good, and we will denote it by $\psi_1$.  

Note that $|A_{\psi_1}(x_1x_2)| \ge 1$ and $|A_{\psi_1}(x_2x_3)|, |A_{\psi_1}(x_3x_4)| \ge 2$.  As above, we  have $A_{\psi_1}(x_1x_2)$ $\subseteq$ $A_{\psi_1}(x_2x_3) = A_{\psi_1}(x_3x_4)$ and $|A_{\psi_1}(x_2x_3)| = 2$, otherwise we obtain a good coloring of $G$ by SDR.  As $x_3x_4$ sees $3,\beta_1$ and $|A_{\psi_1}(x_3x_4)| = 2$, we cannot have $3,\beta_1 \in \sU_{\psi_1}(y_3) \cup \{\psi_1(x_2y_2)\}$.  However, as $A_{\psi_1}(x_2x_3) = A_{\psi_1}(x_3x_4)$, we  have $\Upsilon_{\psi_1}(y_2,x_2) = \{3,\beta_1\}$.  Note that $\Upsilon_\phi(y_2,x_2) = \{3,\beta_1\}$ as a result.

Now, if we switch $\beta_1, \beta_2$ so that $x_5x_6$ is colored with $\beta_2$ and $x_6x_7$ is colored with $\beta_1$, we still have a good partial coloring of $G$, call it $\psi_2$.  The same argument however, shows that $\Upsilon_{\psi_2}(y_2,x_2) = \{3,\beta_2\}$, so that $\Upsilon_\phi(y_2,x_2) = \{3,\beta_2\}$ and $\beta_1 = \beta_2$, a contradiction.  This completes the proof of the case.

As we have exhausted all cases, the lemma holds.
\end{proof}

\begin{figure}[h]
\centering
\begin{tikzpicture}[line cap=round,line join=round,>=triangle 45,x=0.75cm,y=0.75cm]
\clip(-11,-6) rectangle (11,6);
\draw (-6.0,-3.62)-- (-6.0,-5.62);
\draw (-4.097886967409693,-2.238033988749895)-- (-2.195773934819386,-2.85606797749979);
\draw (-7.902113032590307,-2.238033988749895)-- (-9.804226065180615,-2.85606797749979);
\draw (-3.652480562338399,2.032090915285201)-- (-1.6524805623383987,2.032090915285201);
\draw (-4.828051066923345,4.064056926535307)-- (-3.8280510669233445,5.796107734104184);
\draw (-7.175570504584946,4.061966011250106)-- (-8.175570504584945,5.794016818818983);
\draw (-8.347519437661601,2.027909084714799)-- (-10.347519437661601,2.0279090847148002);
\draw (-7.175570504584946,4.061966011250106)-- (-4.828051066923345,4.064056926535307);
\draw (-4.828051066923345,4.064056926535307)-- (-3.652480562338399,2.032090915285201);
\draw (-3.652480562338399,2.032090915285201)-- (-4.824429495415053,-0.001966011250105426);
\draw (-4.824429495415053,-0.001966011250105426)-- (-7.175570504584946,-0.001966011250105204);
\draw (-7.175570504584946,-0.001966011250105204)-- (-8.347519437661601,2.027909084714799);
\draw (-8.347519437661601,2.027909084714799)-- (-7.175570504584946,4.061966011250106);
\draw (-7.175570504584946,-0.001966011250105204)-- (-7.902113032590307,-2.2380339887498946);
\draw (-4.824429495415053,-0.001966011250105426)-- (-4.097886967409693,-2.238033988749895);
\draw (-7.902113032590307,-2.2380339887498946)-- (-6.0,-3.62);
\draw (-6.0,-3.62)-- (-4.097886967409693,-2.238033988749895);
\draw [->] (-1.0,0.0) -- (1.0,0.0);
\draw (-7.2,4) node[anchor=north west] {$u_2$};
\draw (-5.5,4) node[anchor=north west] {$u_3$};
\draw (-4.7,2.4) node[anchor=north west] {$u_4$};
\draw (-5.5,0.8) node[anchor=north west] {$u_5$};
\draw (-5.1,-1.7) node[anchor=north west] {$u_6$};
\draw (-6.4,-2.7) node[anchor=north west] {$u_7$};
\draw (-7.7,-1.7) node[anchor=north west] {$u_8$};
\draw (-7.3,0.8) node[anchor=north west] {$u_0$};
\draw (-8.1,2.4) node[anchor=north west] {$u_1$};
\draw (-8,5.8) node[anchor=north west] {$v_2$};
\draw (4,5.8) node[anchor=north west] {$v_2$};
\draw (-4.9,5.8) node[anchor=north west] {$v_3$};
\draw (3.824429495415055,5.794016818818983)-- (8.171948933076656,5.796107734104184);
\draw (1.6524805623383987,2.0279090847148002)-- (2.195773934819386,-2.85606797749979);
\draw (10.347519437661601,2.032090915285201)-- (9.804226065180615,-2.85606797749979);
\draw (7.1, 5.8) node[anchor=north west] {$v_3$};
\draw (-2.4,2.7) node[anchor=north west] {$v_4$};
\draw (-3,-2) node[anchor=north west] {$v_6$};
\draw (-5.767057190082635,-4.817012066115719) node[anchor=north west] {$v_7$};
\draw (-9.8,-2) node[anchor=north west] {$v_8$};
\draw (-10.4,2.7) node[anchor=north west] {$v_1$};
\draw (1.6,2.7) node[anchor=north west] {$v_1$};
\draw (2.2, -2) node[anchor=north west] {$v_8$};
\draw (9,-2) node[anchor=north west] {$v_6$};
\draw (9.6,2.7) node[anchor=north west] {$v_4$};
\draw (6.139342809917368,-4.599212066115719) node[anchor=north west] {$v_7$};
\begin{scriptsize}
\draw [fill=black] (-6.0,-3.62) circle (1.5pt);
\draw [fill=black] (-4.097886967409693,-2.238033988749895) circle (1.5pt);
\draw [fill=black] (-4.824429495415053,-0.001966011250105426) circle (1.5pt);
\draw [fill=black] (-7.175570504584946,-0.001966011250105204) circle (1.5pt);
\draw [fill=black] (-7.902113032590307,-2.2380339887498946) circle (1.5pt);
\draw [fill=black] (-3.652480562338399,2.032090915285201) circle (1.5pt);
\draw [fill=black] (-4.828051066923345,4.064056926535307) circle (1.5pt);
\draw [fill=black] (-7.175570504584946,4.061966011250106) circle (1.5pt);
\draw [fill=black] (-8.347519437661601,2.027909084714799) circle (1.5pt);
\draw [fill=black] (-6.0,-5.62) circle (1.5pt);
\draw [fill=black] (-4.097886967409693,-2.238033988749895) circle (1.5pt);
\draw [fill=black] (-2.195773934819386,-2.85606797749979) circle (1.5pt);
\draw [fill=black] (-7.902113032590307,-2.238033988749895) circle (1.5pt);
\draw [fill=black] (-9.804226065180615,-2.85606797749979) circle (1.5pt);
\draw [fill=black] (-1.6524805623383987,2.032090915285201) circle (1.5pt);
\draw [fill=black] (-4.828051066923345,4.064056926535307) circle (1.5pt);
\draw [fill=black] (-3.8280510669233445,5.796107734104184) circle (1.5pt);
\draw [fill=black] (-7.175570504584946,4.061966011250106) circle (1.5pt);
\draw [fill=black] (-8.175570504584945,5.794016818818983) circle (1.5pt);
\draw [fill=black] (-7.175570504584946,4.061966011250106) circle (1.5pt);
\draw [fill=black] (-8.347519437661601,2.027909084714799) circle (1.5pt);
\draw [fill=black] (-10.347519437661601,2.0279090847148002) circle (1.5pt);
\draw [fill=black] (-8.347519437661601,2.027909084714799) circle (1.5pt);
\draw [fill=black] (-8.347519437661601,2.027909084714799) circle (1.5pt);
\draw [fill=black] (3.824429495415055,5.794016818818983) circle (1.5pt);
\draw [fill=black] (8.171948933076656,5.796107734104184) circle (1.5pt);
\draw [fill=black] (10.347519437661601,2.032090915285201) circle (1.5pt);
\draw [fill=black] (9.804226065180615,-2.85606797749979) circle (1.5pt);
\draw [fill=black] (6.0,-5.62) circle (1.5pt);
\draw [fill=black] (2.195773934819386,-2.85606797749979) circle (1.5pt);
\draw [fill=black] (1.6524805623383987,2.0279090847148002) circle (1.5pt);
\end{scriptsize}
\end{tikzpicture}
\caption{Forming $G'$ from $G$}
\label{fig:No5-6face}
\end{figure}

\begin{lemma}\label{No5-6face}
No 5-face in $G$ can share an edge with a 6-face. 
\end{lemma}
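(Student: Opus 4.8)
The plan is to reprise, in the asymmetric setting, the argument of Lemma~\ref{No5-5face}. Suppose for contradiction that a $5$-face and a $6$-face share an edge. By Lemmas~\ref{No2on5cycle} and~\ref{No2on6cycle} every vertex on either face is a $3$-vertex, so the union of the two faces is bounded by a $9$-cycle $u_0u_1u_2u_3u_4u_5u_6u_7u_8$ together with the shared edge, which is the chord $u_0u_5$; here the $6$-face is $u_0u_1u_2u_3u_4u_5$ and the $5$-face is $u_0u_5u_6u_7u_8$. The two chord endpoints $u_0,u_5$ already have degree $3$ (two cycle-neighbours plus the chord), so only the remaining seven vertices carry a third neighbour $v_i$, for $i\in\{1,2,3,4,6,7,8\}$. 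First I would invoke Lemmas~\ref{NoTriangle}, \ref{No4cycle}, \ref{separating}, \ref{No2on5cycle} and~\ref{No2on6cycle} to verify that these $v_i$ are pairwise distinct, lie off the $9$-cycle, and to enumerate the very short list of pairs $v_iv_j$ that are permitted to be edges. This is exactly the bookkeeping performed in Case~2 of Lemma~\ref{No2on6cycle}.

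Next I would build the auxiliary graph $G'$ of Figure~\ref{fig:No5-6face}: delete $u_0,\dots,u_8$ and reconnect the external neighbours by the edges $v_1v_8$, $v_4v_6$ and $v_2v_3$, which keeps $G'$ subcubic and planar. The point of $v_1v_8$ and $v_4v_6$ is that $u_1v_1,u_8v_8$ are separated by the single $3$-vertex $u_0$, and $u_4v_4,u_6v_6$ by the single $3$-vertex $u_5$, so neither pair sees itself in $G$; hence the colour that each of $v_1v_8,v_4v_6$ receives in a good colouring of $G'$ (which exists by minimality) may be copied onto \emph{both} of the corresponding pendant edges, precisely as $y_1y_6$ and $y_2y_4$ are exploited in Lemma~\ref{No2on7face}. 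The edge $v_2v_3$ keeps $v_2,v_3$ cubic in $G'$ and pins down the transferred colours on $u_2v_2,u_3v_3$. Ignoring the three added edges yields a good partial colouring $\phi$ of $G$ in which most of the pendant edges $u_iv_i$ are already coloured, so that essentially the ten edges of the $9$-cycle and the chord $u_0u_5$ (together with the leftover pendant edge $u_7v_7$) remain uncoloured. I would then record, for each of these edges, the guaranteed lower bound on its set $A_\phi$ of available colours; the chord $u_0u_5$, which sees the most coloured edges, will have the smallest such set.

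The bulk of the work is the extension. I would colour the abundant edges first and reserve the chord $u_0u_5$ for last, finishing by SDR (Hall's Theorem) whenever the available sets are large enough, and otherwise peeling off edges in an order that keeps at least one colour available at each step. The genuine obstacle is the rigid regime in which the available sets of several consecutive cycle edges coincide and attain their minimum size. Just as in Lemmas~\ref{No5-5face} and~\ref{No2on7face}, this pins the transferred colours into a fixed pattern around the two hinge-vertices $u_0$ and $u_5$, and the contradiction must then be squeezed out of the disjointness relations $\Upsilon_\phi(v_1,u_1)\cap\Upsilon_\phi(v_8,u_8)=\emptyset$ and $\Upsilon_\phi(v_4,u_4)\cap\Upsilon_\phi(v_6,u_6)=\emptyset$, which hold \emph{because} $v_1v_8$ and $v_4v_6$ were made edges of $G'$. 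Where a single pass does not close the case, I would locally recolour one pendant or chord-incident edge and repeat, as is done repeatedly in Lemma~\ref{No5-5face}.

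The hard part, and where I expect most of the effort to go, is that the $5$-face and the $6$-face are not interchangeable, so the two sides of the chord break the symmetry that kept Lemma~\ref{No5-5face} manageable; I anticipate roughly twice as many tight subcases, split according to which colours are copied across the independent pairs and how they interact at $u_0$ and $u_5$. Organising these subcases so that a short recolouring or an application of SDR disposes of each, while reusing the two disjointness relations above as the uniform source of contradiction, is the main bookkeeping burden of the proof.
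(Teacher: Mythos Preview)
Your plan is essentially the paper's own proof: the same auxiliary graph $G'$ with added edges $v_2v_3$, $v_4v_6$, $v_8v_1$, the same transfer of the colours of $v_8v_1$ and $v_4v_6$ onto the paired pendant edges, and a case analysis (the paper splits on whether $\phi(u_1v_1)=\phi(u_4v_4)$) finished off by SDR and local recolourings. One small addition worth noting is that the edge $v_2v_3$ is not merely degree padding; the paper also uses the resulting disjointness $\Upsilon_\phi(v_2,u_2)\cap\Upsilon_\phi(v_3,u_3)=\emptyset$ alongside the two relations you listed when closing out the tight subcases on the $6$-face side.
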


\begin{proof}
Suppose that a 5-face and a 6-face share an edge.  By Lemmas \ref{NoTriangle} and \ref{No2on5cycle}, their boundaries form a 9-cycle, $u_0 u_1 \dots u_8$ so that $u_5u_0 \in E(G)$ .  By Lemmas \ref{No2on5cycle} and \ref{No2on6cycle}, each $u_i$ is a 3-vertex.   Additionally, Lemmas \ref{NoTriangle},  \ref{separating} and \ref{No4cycle}  imply that each $u_i$ other than $u_5, u_0$ has a third neighbor $v_i$ not on the 9-cycle.  By these same lemmas, the vertices $v_1, v_2, v_3, v_4, u_6, u_8$ are distinct from each other, as are the vertices $u_4, u_1, v_6, v_7, v_8$.  

By Lemmas \ref{separating}, \ref{No4cycle},  and \ref{No5-5face}, the edges $v_2v_3, v_4v_6, v_8v_1$ do not exist.  So let $G'$ denote the graph obtained from $G$ by deleting $u_1, u_2, \dots, u_0$ and adding the edges $v_2v_3, v_4v_6, v_8v_1$ (see Figure \ref{fig:No5-6face}). Observe that $G'$ is a subcubic, planar multigraph, and so by the minimality of $G$, $G'$ has a good coloring.  Ignoring $v_2v_3,v_4v_6, v_8v_1$, we have a good partial coloring of $G$ that we can extend by coloring $u_1v_1, u_8v_8$ with the same color that $v_8v_1$ received in $G'$ and $u_4v_4, u_6v_6$ with the same color that $v_4v_6$ received in $G'$.  We can further extend this good partial coloring of $G$ by coloring $u_2v_2, u_3v_3$ and $u_7v_7$.  Call this extended, good partial coloring, $\phi$, and let $\alpha$ denote $\phi(u_7v_7)$.

\setcounter{case}{0}
\begin{case}\label{Case1}
$\phi(u_1v_1) \neq \phi(u_4v_4)$.
\end{case}

Without loss of generality, we may assume that $\phi(u_1v_1) = \phi(u_8v_8) = 2$ and $\phi(u_4v_4) = \phi(u_6v_6) = 1$.  

\begin{subcase}\label{1.1}
$1 \in \Upsilon_\phi(v_1,u_1)$ and $2 \in \Upsilon_\phi(v_4,u_4)$.
\end{subcase}

By the existence of $v_4v_6, v_8v_1$ in our auxiliary graph $G'$, we cannot have $2 \in \sU_\phi(v_6)$ or $1 \in \sU_\phi(v_8)$.  So, we can extend $\phi$ to another good partial coloring of $G$ by coloring $u_5u_6$ with 2 and $u_8u_0$ with 1.  Call this new coloring $\sigma$.  

Observe $|A_\sigma(u_2u_3)| \ge 1$, $|A_\sigma(u_iu_{i+1})| \ge 2$ for $i \in \{1,3,6,7\}$, $|A_\sigma(u_4u_5)|, |A_\sigma(u_0u_1)| \ge 5$ and $|A_\sigma(u_5u_0)| \ge 7$.  Thus, if we can somehow extend $\sigma$ to a good partial coloring on $u_1u_2, u_2u_3, u_3u_4$, we can further extend this to a good coloring of $G$ by coloring $u_6u_7, u_7u_8$, $u_4u_5$, $u_0u_1$, $u_5u_0$ in this order.  Thus, it suffices to color $u_1u_2, u_2u_3, u_3u_4$.

If we cannot, then we  have $A_\sigma(u_1u_2) = A_\sigma(u_3u_4)$ and $|A_\sigma(u_1u_2)| = 2$.  As $1,2 \notin A_\sigma(u_1u_2)$, we may assume that $A_\sigma(u_1u_2) = A_\sigma(u_3u_4) = \{8,9\}$.  Additionally, we may assume that $\sU_\sigma(v_4) = \{1,2, 3\}, A_\sigma(v_3) = \{4,5,6\}$ with $\sigma(u_3v_3) = 4$ and $\sigma(u_2v_2) = 7$.  Since $A_\sigma(u_1u_2) = A_\sigma(u_3u_4)$, we have 5 or 6 in $\Upsilon_\sigma(v_2,u_2)$.  However, $v_2v_3$ is an edge in our auxiliary graph $G'$ so that $\Upsilon_\sigma(v_2,u_2) \cap \Upsilon_\sigma(v_3,u_3) = \emptyset$, a contradiction.

\begin{subcase}\label{1.2}
$1 \in \Upsilon_\phi(v_1,u_1)$, but $2 \notin \Upsilon(v_4,u_4)$.
\end{subcase}

Recall that $\phi$ colors both $u_2v_2$ and $u_3v_3$.  In this case, we may choose $\phi(u_3v_3)$ so that $\phi(u_3v_3) \neq 2$.   As a result, $2 \in A_\phi(u_4u_5)$.  As in Subcase \ref{1.1}, we can extend $\phi$ by coloring $u_8u_0$ with 1.  Call this new, good partial coloring $\sigma$.  We proceed to prove this subcase by considering whether or not 2 is in $\Upsilon_\sigma(v_3,u_3)$.

\begin{subsubcase}
$2 \notin \Upsilon_\sigma(v_3,u_3)$.
\end{subsubcase}

As a result, $2 \in A_\sigma(u_3u_4)$, and we can extend $\sigma$ by coloring $u_3u_4$ with 2, and then $u_2u_3, u_1u_2$ in this order.  Call this good partial coloring $\psi$.  Observe that $|A_\psi(u_6u_7)|, |A_\psi(u_7u_8)| \ge 2$, $|A_\psi(u_4u_5)|, |A_\psi(u_0u_1)| \ge 3$, $|A_\psi(u_5u_6)| \ge 4$ and $|A_\psi(u_5u_0)| \ge 6$.  If $|A_\psi(u_4u_5) \cup A_\psi(u_7u_8)| \ge 5$, then we obtain a good coloring of $G$ by SDR.  Otherwise, there exists some $\beta$ with which we can color $u_4u_5, u_7u_8$ and then color $u_6u_7, u_0u_1, u_5u_6, u_5u_0$ in this order to obtain a good coloring of $G$.

\begin{subsubcase}
$2 \in \Upsilon_\phi(v_3,u_3)$.
\end{subsubcase}

Recall that $2 \in A_\sigma(u_4u_5)$.    Additionally, we can recolor $u_1v_1$ with some $\beta \neq 2$ and still maintain a good partial coloring of $G$.  Thus, we adjust $\sigma$ by recoloring $u_1v_1$ with $\beta$, coloring $u_1u_2, u_4u_5$ with 2 and then coloring $u_2u_3, u_3u_4$ in this order.  Call this good partial coloring $\psi$.  

Observe that $|A_\psi(u_6u_7)|, |A_\psi(u_7u_8)| \ge 2$, $|A_\psi(u_5u_6)|, |A_\psi(u_0u_1)| \ge 3$ and $|A_\psi(u_5u_0)| \ge 5$.  We then color $u_6u_7, u_7u_8, u_5u_6, u_0u_1, u_5u_0$ in this order to obtain a good coloring of $G$.

This completes the subcase, and by symmetry, it remains to consider the following subcase.

\begin{subcase}
$1,2 \notin \Upsilon_\phi(v_1,u_1) \cup \Upsilon_\phi(v_4,u_4)$.
\end{subcase}

Just as in Subcase \ref{1.2}, we may assume that $\phi(u_3v_3) \neq 2$, and as a result, $2 \in A_\phi(u_4u_5)$. We proceed to prove this final subcase based on the color of $\phi(u_2v_2)$.
 
\begin{subsubcase}\label{1.3.1}
$\phi(u_2v_2) \neq 1$.
\end{subsubcase}

As a result, $1 \in A_\phi(u_0u_1)$.  Additionally, there exists some color in $A_\phi(u_2u_3)$.  Thus, we can extend $\phi$ to another good partial coloring of $G$ by coloring $u_1u_0$ with 1, $u_4u_5$ with 2 and then coloring $u_2u_3$ with some available color.  We can further extend $\phi$ by coloring $u_6u_7$ and $u_7u_8$ with some $\beta$ and $\gamma$, respectively.  Call this good partial coloring $\sigma$.

Now, we can choose $\beta$ and $\gamma$ such that either $\{\alpha,\beta\} \neq \Upsilon_\sigma(v_1,u_1)$ or $\{\alpha,\gamma\} \neq \Upsilon_\sigma(v_4,u_4)$.  We show the former as the latter is done by a similar argument.  Since $|A_\phi(u_6u_7)|$, $|A_\phi(u_7u_8)|$ $\ge 2$, if $\alpha \notin \Upsilon_\phi(v_1,u_1)$, then we are done, and if $\alpha \in \Upsilon_\phi(v_1,u_1)$, then we can choose $\beta$ from $A_\phi(u_6u_7)\setminus\Upsilon_\phi(v_1,u_1)$.  

Now, if $\Upsilon_\phi(v_1,u_1) \cap \Upsilon_\phi(v_4,u_4) = \emptyset$, then we can choose $\beta$ and $\gamma$ such that both $\Upsilon_\phi(v_1,u_1) \neq \{\alpha,\beta\}$ and $\Upsilon_\phi(v_4,u_4) \neq \{\alpha,\gamma\}$.  Indeed, if $\alpha \notin \Upsilon_\phi(v_1,u_1) \cup \Upsilon_\phi(v_4,u_4)$, then we are done.  So either $\alpha \in \Upsilon_\phi(v_1,u_1)\setminus\Upsilon_\phi(v_4,u_4)$ or $\alpha \in \Upsilon_\phi(v_4,u_4)\setminus\Upsilon_\phi(v_1,u_1)$.  If the former holds, then we proceed as above since we are guaranteed that $\{\alpha,\gamma\} \neq \Upsilon_\phi(v_4,u_4)$, and a similar argument holds in the latter case.  

In Subcase \ref{1.3.1}, we will assume that $\beta,\gamma$ are chosen so that $\{\alpha,\gamma\} \neq \Upsilon_\phi(v_4,u_4)$.  Additionally, as $\sigma(u_2v_2), \sigma(u_2u_3), \sigma(u_3v_3) \notin \{1,2\}$ and are distinct from each other, we may assume that $\sigma(u_3v_3) = 3, \sigma(u_2v_2) = 4$ and $\sigma(u_2u_3) = 5$.

 Since $A_\sigma(u_1u_2)$ and $A_\sigma(u_3u_4)$ are possibly empty, we proceed by considering whether they are empty or not.

\begin{subsubsubcase}\label{1.3.1.1}
$A_\sigma(u_1u_2) = A_\sigma(u_3u_4) = \emptyset$.
\end{subsubsubcase}

As $u_1u_2, u_3u_4$ each see all nine colors and $v_2v_3$ was an edge of $G'$, we may assume that $\Upsilon_\sigma(v_1,u_1) = \Upsilon_\sigma(v_3,u_3) = \{6,7\}$ and $\Upsilon_\sigma(v_2,u_2) = \Upsilon(v_4,u_4) = \{8,9\}$.  Therefore, we can adjust $\sigma$ by uncoloring $u_0u_1, u_4u_5$ and then coloring $u_1u_2$ and $u_3u_4$ with 1 and 2, respectively.  Call this good partial coloring $\psi$.  Since $\Upsilon_\sigma(v_1,u_1) \cap \Upsilon_\sigma(v_4,u_4) = \emptyset$, we can assume that $\beta, \gamma$ were chosen so that  $\{\alpha,\beta\} \neq \{6,7\}$ and $\{\alpha,\gamma\} \neq \{8,9\}$.

Note that $|A_\psi(u_iu_{i+1})| \ge 2$ for $i \in \{0,4,5,8\}$ modulo 9 and $|A_\psi(u_5u_0)| \ge 5$.  In particular, $A_\psi(u_4u_5) \subseteq \{4,6,7\}$ and $A_\psi(u_0u_1) \subseteq \{3,8,9\}$ so that $|A_\psi(u_4u_5) \cup A_\psi(u_0u_1)| \ge 4$.  

Now, suppose $A_\psi(u_4u_5) = A_\psi(u_5u_6)$ and $|A_\psi(u_4u_5)| = 2$.  As $u_4u_5$ sees edges colored 8 and 9, and $\Upsilon_\psi(v_4,u_4) \cap \Upsilon_\psi(v_6,u_6) = \emptyset$, we have $8,9 \in \{\alpha,\beta,\gamma\}$.  However, as $|A_\psi(u_4u_5)| = 2$, $\beta \notin \{8,9\}$ so that $\{8,9\} = \Upsilon_\psi(v_4,u_4) = \{\alpha,\gamma\}$, a contradiction.  Thus, we have $|A_\psi(u_4u_5) \cup A_\psi(u_5u_6)| \ge 3$, and by a symmetric argument, $|A_\psi(u_0u_1) \cup A_\psi(u_8u_0) | \ge 3$.  Thus, we obtain a good coloring of $G$ by SDR.

\begin{subsubsubcase}\label{1.3.1.2}
There exists $\delta \in A_\sigma(u_1u_2)$ and $A_\sigma(u_3u_4) = \emptyset$.
\end{subsubsubcase}

As $u_3u_4$ sees all nine colors, we may assume that $\Upsilon_\sigma(v_3, u_3) = \{6,7\}$ and $\Upsilon_\sigma(v_4,u_4) = \{8,9\}$.  We can adjust $\sigma$ by uncoloring $u_4u_5$ and then coloring $u_3u_4$ with 2 and $u_1u_2$ with $\delta$.  Call this good partial coloring $\psi$.

Observe that $|A_\psi(u_8u_0)| \ge 1$, $|A_\psi(u_4u_5)|, |A_\psi(u_5u_6)| \ge 2$ and $|A_\psi(u_5u_0)|\ge 4$.  If $|A_\psi(u_4u_5)$ $\cup A_\psi(u_5u_6)| \ge 3$, then we obtain a good coloring of $G$ by SDR.  So we have $A_\psi(u_4u_5) = A_\psi(u_5u_6)$ and $|A_\psi(u_4u_5)| = 2$.  However, a similar argument to that used in Subcase \ref{1.3.1.1} implies that $\{\alpha,\gamma\} = \Upsilon_\psi(v_4,u_4)$, a contradiction.

\begin{subsubsubcase}
There exists $\epsilon \in A_\sigma(u_3u_4)$ and $A_\sigma(u_1u_2) = \emptyset$.
\end{subsubsubcase}

Note that the choice of $\beta$ and $\gamma$ does not affect $A_\sigma(u_1u_2)$ or $A_\sigma(u_3u_4)$.  Thus, we can rechoose $\beta$ and $\gamma$, if necessary, so that $\{\alpha, \beta\} \neq \Upsilon_\phi(v_1,u_1)$.  We then repeat a symmetric argument to the above.

\begin{subsubsubcase}
There exist $\delta \in A_\sigma(u_1u_2)$ and $\epsilon \in A_\sigma(u_3u_4)$.
\end{subsubsubcase}

Suppose first that $2 \notin \Upsilon_\sigma(v_3,u_3)$.  We can adjust $\sigma$ by uncoloring $u_4u_5$ and then coloring $u_3u_4$ with 2 and $u_1u_2$ with $\delta$.  From here, the argument is identical to that in Subcase \ref{1.3.1.2}.  Thus, $2 \in \Upsilon_\sigma(v_3,u_3)$.  By symmetry, we also have $1 \in \Upsilon_\sigma(v_2,u_2)$.

We can adjust $\sigma$ by uncoloring $u_2u_3$ and then coloring $u_3u_4, u_1u_2, u_2u_3$ in this order.  As each of these edges sees 1, 2, 3 and 4, we may assume that they are colored 5, 6, 7, respectively.  Call this good partial coloring $\psi$.  Observe that $|A_{\psi}(u_5u_6)|, |A_{\psi}(u_8u_0)| \ge 1$ and $|A_{\psi}(u_5u_0)| \ge 3$.  

If $|A_{\psi}(u_5u_6) \cup A_{\psi}(u_8u_0)| \ge 2$, then we obtain a good coloring of $G$ by SDR.  So we have $A_{\psi}(u_5u_6) = A_{\psi}(u_8u_0) = \{\zeta\}$.  Since $u_5u_6$ sees an edge colored 5, we cannot have $5 \in \{\alpha,\beta,\gamma\}$.  Since $A_\psi(u_8u_0) = A_\psi(u_5u_6)$, $u_8u_0$ also sees 5, and so, $5 \in \Upsilon_\psi(v_8,u_8)$.  Since $v_8v_1$ is an edge of $G'$, we cannot have $5 \in \Upsilon_\psi(v_1,u_1)$.  Similarly, as $|A_\psi(u_8u_0)| = 1$ and $u_8u_0$ sees 1, we cannot have $1 \in \Upsilon_\psi(v_8,u_8)$.

Thus, if we recolor $u_0u_1$ with 5, color $u_8u_0$ with 1, we can than color $u_5u_6$ and $u_5u_0$ in this order to obtain a good coloring of $G$.

This completes the proof of Subcase \ref{1.3.1}.

\begin{subsubcase}
$\phi(u_2v_2) = 1$.
\end{subsubcase}

We can extend $\phi$ to a good partial coloring of $G$, call it $\sigma$, such that $u_4u_5$ is colored with 2, and $u_6u_7$ and  $u_7u_8$ are colored with $\beta$ and $\gamma$, respectively.  Just as in Subcase \ref{1.3.1}, we can choose $\beta, \gamma$ so that $\{\alpha,\beta\} \neq \Upsilon_\sigma(v_1,u_1)$, and additionally require that $\{\alpha,\gamma\} \neq \Upsilon_\sigma(v_4,u_4)$ when $\Upsilon_\sigma(v_1,u_1) \cap \Upsilon_\sigma(v_4,u_4) = \emptyset$.   Also, as $\sigma(u_3v_3) \neq 2$, we may assume that $\sigma(u_3v_3) = 3$.

Note that here, $\sigma$ does not color $u_2u_3$.  Thus, we proceed based on whether or not we can extend $\sigma$ to $u_1u_2, u_2u_3, u_3u_4$.

\begin{subsubsubcase}
We cannot extend $\sigma$ by coloring $u_1u_2, u_2u_3, u_3u_4$.
\end{subsubsubcase}

As $|A_\sigma(u_iu_{i+1})| \ge 2$ for $i \in [3]$, we may assume that $A_\sigma(u_1u_2) = A_\sigma(u_2u_3) = A_\sigma(u_3u_4) = \{4,5\}$.  So without loss of generality, $\Upsilon_\sigma(v_2,u_2) = \Upsilon_\sigma(v_4,u_4) = \{8,9\}$ and $\Upsilon_\sigma(v_1,u_1) = \Upsilon_\sigma(v_3,u_3) = \{6,7\}$.  Recall that just as in Subcase \ref{1.3.1}, $\Upsilon_\sigma(v_1,u_1) \cap \Upsilon_\sigma(v_4,u_4) = \emptyset$, we may assume $\{\alpha,\beta\} \neq \{6,7\}$ and $\{\alpha,\gamma\} \neq \{8,9\}$.  

Now, we can adjust $\sigma$ by uncoloring $u_4u_5$, coloring $u_3u_4$ with 2,  and then coloring $u_1u_2, u_2u_3$ from $\{4,5\}$ so that $u_1u_2$ is not colored with $\beta$.  We call this good partial coloring of $G$, $\psi$, and we may assume that $\psi(u_1u_2) = 4, \psi(u_2u_3) = 5$.

Observe that $|A_\psi(u_iu_{i+1})| \ge 2$ for $i \in \{0,4,5,8\}$ modulo 9 and $|A_\psi(u_5u_0)| \ge 4$.   In particular, $A_\psi(u_4u_5) \subseteq \{4,6,7\}$, $A_\psi(u_0u_1) \subseteq \{3,8,9\}$ and $|A_\psi(u_4u_5) \cup A_\psi(u_0u_1)| \ge 4$.  Now, as $\beta \neq 4$, we have $4 \in A_\psi(u_4u_5)$, and additionally, $4 \notin A_\psi(u_8u_0) \cup A_\psi(u_0u_1)$.  

Also, $|A_\psi(u_8u_0) \cup A_\psi(u_0u_1)| \ge 3$, otherwise we can apply an argument similar to that used in Subcase \ref{1.3.1.1} to show that $\{\alpha,\beta\} = \Upsilon_\psi(v_1,u_1)$, a contradiction.  Thus, we can color $u_4u_5$ with 4, and then obtain a good coloring of $G$ by SDR from the rest.

\begin{subsubsubcase}
We can extend $\sigma$ by coloring $u_1u_2, u_2u_3, u_3u_4$.
\end{subsubsubcase}

Without loss of generality, we may assume that $u_1u_2, u_2u_3, u_3u_4$ are colored with $4,5,6$, respectively, and call this good partial coloring $\psi$.  Observe that $|A_\psi(u_5u_6)| \ge 1$, $|A_\psi(u_8u_0)|$, $|A_\psi(u_0u_1)| \ge 2$ and $|A_\psi(u_5u_0)| \ge 3$.  Additionally, $|A_\psi(u_8u_0) \cup A_\psi(u_0u_1)| \ge 3$, otherwise we can apply an argument similar to that used in Subcase \ref{1.3.1.1} to show that $\{\alpha,\beta\} = \Upsilon_\psi(v_1,u_1)$ (observe that  $|A_\psi(u_0u_1)| = 2$ implies that $|\Upsilon_\psi(v_1,u_1) \cup \{1,2,4,5,\gamma\}| = 7$).  

First,  $\beta,\gamma \notin \{4,6\}$, otherwise $|A_\psi(u_5u_0)| \ge 4$, and we obtain a good coloring of $G$ by SDR.

Additionally, $1 \in \Upsilon_\psi(v_8,u_8)$, otherwise we can color $u_8u_0$ with 1 and then color $u_5u_6$, $u_0u_1$, $u_5u_0$ in this order to obtain a good coloring of $G$.  

We claim $6 \in \Upsilon_\psi(v_1,u_1)$.  If on the contrary, $6 \notin \Upsilon_\psi(v_1,u_1)$, then as $\gamma \neq 6$,  we could color $u_0u_1$ with 6.  Then we have $A_\psi(u_5u_6) = \{\delta\}$ and $A_\psi(u_8u_0) = \{6,\delta\}$, otherwise we could color $u_5u_6, u_8u_0, u_5u_0$ in this order to obtain a good coloring of $G$.  However, since $|A_\psi(u_8u_0) \cup A_\psi(u_0u_1)| \ge 3$ (so that $A_\psi(u_0u_1) \neq \{6,\delta\}$), we can color $u_5u_6$ with $\delta$, $u_8u_0$ with 6 and then color $u_0u_1, u_5u_0$ in this order to obtain a good coloring of $G$.

We may also assume that $\alpha = 6$.  Observe that $6 \notin \{\beta, \gamma\}$, and as $v_8v_1$ is an edge of $G'$, $6 \notin \Upsilon_\psi(v_8,u_8)$.  Thus, if $\alpha \neq 6$, we can color $u_8u_0$ with 6 and then color $u_5u_6, u_0u_1, u_5u_0$ in this order to obtain a good coloring of $G$.

Now, we also have $4 \in \Upsilon_\psi(v_6,u_6)$.  If not, then since $4 \notin \{\beta,\gamma\}$, we can color $u_5u_6$ with 4 and then color $u_0u_1, u_8u_0, u_5u_0$ in this order to obtain a good coloring of $G$.  As $v_4v_6$ is an edge of $G'$, we have $4 \notin \Upsilon_\psi(v_4,u_4)$.

Lastly, we claim that $2 \in \Upsilon_\psi(v_6,u_6)$.  If not, then we can recolor $u_4u_5$ with 4, color $u_5u_6$ with 2 and then color $u_8u_0, u_0u_1, u_5u_0$ in this order to obtain a good coloring of $G$.

Now, we uncolor the edges $u_6u_7, u_7u_8$, and call this new coloring $\tau$.  Observe that $|A_{\tau}(u_iu_{i+1})|$ $\ge 3$ for $i \in \{0,6,7\}$ modulo 9, $|A_{\tau}(u_8u_0)| \ge 4$ and $|A_\tau(u_5u_6)|, |A_{\tau}(u_5u_0)| \ge 5$.  If $|A_{\tau}(u_6u_7) \cup A_{\tau}(u_0u_1)| \ge 6$, then we obtain a good coloring of $G$ by SDR.  Thus, there exists some $\epsilon$ such that we can color $u_6u_7, u_0u_1$ with $\epsilon$ and then color $u_7u_8, u_5u_6, u_8u_0, u_5u_0$ in this order to obtain a good coloring of $G$.

This completes all subcases of Case \ref{Case1}.

\begin{case}\label{different}
$\phi(u_1v_1) = \phi(u_4v_4)$.
\end{case}

Without loss of generality, we may assume that $\phi(u_iv_i) = 1$ for $i \in \{1,4,6,8\}$,  $\phi(u_{2}v_{2}) = 2$ and $\phi(u_{3}v_{3}) = 3$.

\begin{subcase}\label{2.1}
We can extend $\phi$ by coloring $u_{1}u_{2}, u_{2}u_{3}, u_{3}u_{4}$.
\end{subcase}

Let us extend $\phi$ by coloring $u_1u_2, u_2u_3, u_3u_4$, and then uncolor $u_7v_7$.  Call this new good partial coloring $\sigma$.  Without loss of generality, we may assume that $\sigma(u_1u_2) = 4, \sigma(u_2u_3) = 5, \sigma(u_3u_4) = 6$.

\begin{subsubcase}\label{TT}\label{CASE2-1-1}
Either $6 \notin \Upsilon_\sigma(v_1,u_1)$ or $4 \notin \Upsilon_\sigma(v_{4},u_{4})$.
\end{subsubcase}
 
By symmetry, we may assume that $4 \notin \Upsilon_\sigma(v_4,u_4)$.  As a result, we can extend $\sigma$ by coloring $u_4u_5$ with 4.  Call this good partial coloring  $\psi$.  Note that $|A_\psi(u_7v_7)| \ge 2$, $|A_\psi(u_6u_7)|, |A_\psi(u_0u_1)| \ge 3$, $|A_\psi(u_5u_6)|, |A_\psi(u_7u_8)| \ge 4$, $|A_\psi(u_0u_1)| \ge 5$ and $|A_\psi(u_5u_0)| \ge 6$.  

First, we show that $|A_\psi(u_7v_7) \cup A_\psi(u_0u_1)| \ge 5$.  If not, then we can color $u_7v_7, u_0u_1$ with some $\beta$ and then color $u_6u_7, u_5u_6, u_7u_8, u_8u_0, u_5u_0$ in this order to obtain a good coloring of $G$.  In a similar manner, we show that $|A_\psi(u_6u_7) \cup A_\psi(u_0u_1)| \ge 6$ by otherwise coloring $u_6u_7, u_0u_1$ with some $\gamma$, and then coloring $u_7v_7, u_7u_8, u_5u_6, u_8u_0, u_5u_0$ in this order to obtain our good coloring of $G$.

Now, if $|A_\psi(u_7v_7) \cup A_\psi(u_5u_0)| \ge 7$, then we can obtain a good coloring of $G$ by SDR.  Otherwise, we can color $u_7v_7, u_5u_0$ with some $\delta$, and then obtain a good coloring of $G$ by SDR from the remaining edges using the above.

\begin{subsubcase}\label{CASE2-1-2}
$6 \in \Upsilon_\sigma(u_{1}v_{1})$ and $4 \in \Upsilon_\sigma(u_{4}v_{4})$.
\end{subsubcase}

We first note that there exists $\beta \in A_\sigma(u_7v_7) \setminus\{4\}$ and that $4 \in A_\sigma(u_5u_6)$.  Thus, we can obtain another good partial coloring of $G$ by coloring $u_5u_6$ with 4 and $u_7v_7$ with $\beta$.  Call this new coloring $\psi$.  Observe $|A_\psi(u_6u_7)|, |A_\psi(u_7u_8)| \ge 2$, $|A_\psi(u_4u_5)|, |A_\psi(u_0u_1)| \ge 3$, $|A_\psi(u_8u_0)| \ge 4$ and $|A_\psi(u_5u_0)| \ge 6$.  

First, if $|A_\psi(u_6u_7) \cup A_\psi(u_0u_1)| \ge 5$, then we obtain a good coloring of $G$ by SDR.  Thus, there exists some $\gamma \in A_\psi(u_6u_7) \cap A_\psi(u_0u_1)$ so that we can color $u_6u_7, u_0u_1$ with $\gamma$ and then color $u_7u_8, u_8u_0, u_4u_5, u_5u_0$ in this order to obtain a good coloring of $G$.

\begin{subcase}
We cannot extend $\phi$ by coloring  $u_{1}u_{2}, u_{2}u_{3}, u_{3}u_{4}$.
\end{subcase}

As $|A_\phi(u_iu_{i+1})| \ge 2$ for $i \in \{1,2,3\}$, we may assume that $A_\phi(u_iu_{i+1}) = \{8,9\}$ for $i \in \{1,2,3\}$.  Thus, without loss of generality,  $\Upsilon_\phi(v_{2},u_{2}) = \Upsilon_\phi(v_{4},u_{4}) = \{4, 5\}$ and $\Upsilon_\phi(v_{1},u_{1}) = \Upsilon_\phi(v_{3},u_{3}) = \{6, 7\}$.  We can recolor $u_4v_4$ with some $\beta \neq 1$ and still maintain a good partial coloring of $G$.

Thus, we can obtain another good partial coloring of $G$ by first recoloring $u_4v_4$ with $\beta$, color $u_3u_4$ with 1 and then color $u_2u_3, u_1u_2$ in this order.  As in Subcase \ref{2.1}, we also uncolor $u_7v_7$, and call this new coloring $\sigma$.  Note that $\{\sigma(u_1u_2), \sigma(u_2u_3)\} = \{8,9\}$, and so without loss of generaltiy, $\sigma(u_1u_2) = 8, \sigma(u_2u_3) = 9$.

\begin{subsubcase}
$\beta \neq 8$.
\end{subsubcase}

As $8 \in A_\phi(u_3u_4)$, we cannot have $8 \in \sU_\sigma(y_4)$.  Thus, we can extend $\sigma$ by coloring $u_4u_5$ with 8 and then proceed in the same way as in Subcase \ref{CASE2-1-1} replacing 8 with 4.

\begin{subsubcase}
$\beta = 8$.
\end{subsubcase}

By the existence of $v_8v_1$ in our auxiliary graph $G$, $6 \in \Upsilon_\sigma(v_1,u_1)$ implies that $6 \notin \Upsilon_\sigma(v_1,u_1)$ so that $6 \in A_\sigma(u_8u_0)$.  Note that there exists some $\gamma \in A_\sigma(u_7v_7) \setminus\{6\}$.  

We can then extend $\sigma$ to another good coloring of $G$ by coloring $u_7v_7$ with $\gamma$ and $u_8u_0$ with 6.  Call this $\psi$.  Observe that $A_\psi(u_4u_5) = \{2,7\}, A_\psi(u_0u_1) = \{3,4,5\}$, $|A_\psi(u_6u_7)|$, $|A_\psi(u_7u_8)|$ $\ge 2$, $|A_\psi(u_5u_6)| \ge 3$ and $|A_\psi(u_5u_0)| \ge 6$.  As $A_\psi(u_4u_5) \cap A_\psi(u_0u_1) = \emptyset$, coloring $u_4u_5$ does not affect coloring $u_0u_1$.  

Now, if $|A_\psi(u_4u_5) \cup A_\psi(u_7u_8)| \ge 4$, we can color $u_4u_5, u_5u_6, u_6u_7, u_7u_8$ by SDR and then color $u_0u_1, u_5u_0$ in this order to obtain a good coloring of $G$.  Thus, there exists some $\delta$ so that we can color $u_4u_5, u_7u_8$ with $\delta$ and then color $u_6u_7, u_5u_6, u_0u_1, u_5u_0$ in this order to obtain a good coloring of $G$.

This completes the proof of the final subcase of Case \ref{different}, and so proves the lemma.
\end{proof}


\section{Proof of Theorem \ref{thm:conj}}\label{sec:proof}

We are now ready to prove Theorem \ref{thm:conj} via discharging using the lemmas from Sections \ref{struct1}, \ref{struct2} and \ref{struct3},

\begin{proof}
By Euler's formula,
\[
\sum_{v\in V(G)}(2d(v)-6)+\sum_{f\in F(G)}(d(f)-6)=-12.
\]

Thus, if we assign  to each vertex $v$ the initial charge $2d(v) - 6$ and  to each face $f$ the initial charge $d(f)-6$, 
then the total charge will be $ - 12$. We design appropriate discharging rules and redistribute charges among faces and vertices so
 that the final charge of every face and every vertex is nonnegative, a contradiction.

{\bf Discharging Rules:}\\
(R1) Every 2-vertex receives 1 from each incident face.\\
(R2) Every 5-face receives $\frac{1}{5}$ from each adjacent face. \\

By 
 Rule~(R1), at the end of discharging, each $2$-vertex 
 will have  charge $-2+1+1=0$. The charge of each 3-vertex does not change and remains $0$.

 By Rule~(R2) and Lemmas \ref{No2on5cycle} and \ref{No5-5face}, the final charge of every $5$-face is $5 - 6 + 5 \times \frac{1}{5} = 0$.

By Lemmas \ref{No2on6cycle} and \ref{No5-6face}, each 6-face gives no charge.  Thus, as it starts with zero charge and does not receives any charge, the final charge is zero.

By Lemmas \ref{No2on7face} and \ref{No5-5face}, each 7-face contains only 3-vertices and is adjacent to at most three 5-faces.  Thus, the final charge is at least $7 - 6 - 3 \times \frac{1}{5} = \frac{2}{5}$.

By Lemmas \ref{No5-5face} and \ref{distance>=5}, each $k$-face, $k \ge 8$, is adjacent to at most $\lfloor \frac{k}{2} \rfloor$ 5-faces 
and contains at most $\lfloor \frac{k}{5}\rfloor$ 2-vertices on its boundary.  
Thus, the final charge is at least  
$k - 6 - \left \lfloor \frac{k}{5} \right \rfloor \times 1 - \left \lfloor \frac{k}{2} \right \rfloor \times \frac{1}{5}$, which is positive for $k \ge 8$.

This completes the proof.
\end{proof}


\noindent\textbf{Conclusion.}  There are many unresolved questions regarding the strong chromatic index of graphs.  We present a few that pertain specifically to subcubic planar graphs.  As mentioned, Theorem \ref{thm:conj} is shown to be best possible by the complement of $C_6$.  To the authors' knowledge, this is the only such example. Perhaps the result can be improved for graphs outside of a potentially finite family.  Additionally, a list-coloring result is unknown and does not extend naturally from the proofs given in this paper.  Thus, a list-coloring result similar to that of Theorem \ref{thm:conj} would be of interest.\\


\noindent {\bf Acknowledgment.}  The authors thank the referees for their comments and careful reading of this paper.


\end{document}